\definecolor{Maroon}{HTML}{ad2231}
\definecolor{webgreen}{HTML}{008000}
\newtheorem{theorem}{Theorem}[section]
\newtheorem{corollary}[theorem]{Corollary}
\newtheorem{proposition}[theorem]{Proposition}
\newtheorem{lemma}[theorem]{Lemma}
\newtheorem{remark}[theorem]{Remark}
\theoremstyle{definition}
\numberwithin{equation}{section}
\begin{document}
\title{On the speed of coming down from infinity for subcritical branching processes with pairwise interactions}
\author{Gabriel Berzunza Ojeda\footnote{ {\sc Department of Mathematical Sciences, University of Liverpool. Liverpool, United Kingdom.} E-mail: gabriel.berzunza-ojeda@liverpool.ac.uk}\,\, 
and
Juan Carlos Pardo\footnote{{\sc Centro de Investigaci\'on en Matem\'aticas A.C., Mexico.} E-mail: jcpardo@cimat.mx} \\ \vspace*{10mm}
}
\date{ }
\maketitle

\vspace{0.1in}

\begin{abstract} 
In this paper, we study the phenomenon of {\it coming down from infinity} for subcritical cooperative branching processes with pairwise interactions (BPI processes) under suitable conditions. BPI processes are continuous-time Markov chains that  extend  classical branching models by incorporating additional mechanisms accounting for both competitive and cooperative interactions  between pairs of individuals. 

Our main focus is  on characterising the speed at which BPI processes evolve when starting from a very large initial population in the subcritical regime. In addition, we investigate their second-order fluctuations. Furthermore, our results also apply to a class of exchangeable fragmentation-coalescent processes introduced by Berestycki  \cite{BerestyckiJ2004} and several other models from population genetics.
\end{abstract}

\noindent {\sc Key words and phrases}: Branching processes with interactions, coming down from infinity, martingale techniques, second-order approximations.

\noindent {\sc Subject Classes}: 60J80, 60K35, 60F17, 60J60, 90D25


\section{Introduction} \label{Sec1}

In this paper, we explore the speed of \textit{coming down from infinity} for subcritical cooperative branching processes with pairwise interactions (referred to as BPI processes), under appropriate conditions. These processes were recently introduced in their current form by Gonz\'alez-Casanova et al.\ \cite{Pa2021} and further analysed by the authors in \cite{Berzunza2020}. Our primary focus is on understanding their short-time behaviour when the initial population size is significantly large

BPI processes emerge as a natural extension of (continuous-time) Bienaym\'e-Galton-Watson processes (BGW processes), addressing some of their inherent limitations, in particular their unrealistic properties. By incorporating pairwise interactions such as competition and cooperation between individuals, BPI processes can capture complex dynamics observed in biological systems. This generalisation has been studied in various contexts by several authors, including Jagers \cite{Jagers1994}, Kalinkin \cite{Ka82, Ka99, Ka01, Ka02, Ka02-1}, Lambert \cite{Lambert2005}, and more recently by Gonz\'alez-Casanova et al.\ \cite{Pa2021} and the authors \cite{Berzunza2020}. In these works, the birth and death rates are generalised by considering polynomial rates as functions of the population size which can be interpreted as different types of interactions between individuals.

Formally, let  $\mathbb{N}\coloneqq\{1,2,\ldots\}$ and $\mathbb{N}_{0} \coloneqq \mathbb{N} \cup \{0\}$. BPI processes are continuous-time Markov chains with state space $\mathbb{N}_{0}$. They feature natural births and deaths at rates proportional to the current population size, similar to the continuous-time BGW process, but also incorporate additional birth and death events due to cooperation and competition, occurring at rates proportional to the square of the population size. Specifically, a BPI process with parameters $c, d \in \mathbb{R}_{+}:=[0,\infty)$ and sequences $(b_{i})_{i \geq 1}$ and $(\pi_{i})_{i \geq 1}$ where $b_i, \pi_i\in \mathbb{R}_{+}$, for $i\ge 1$,  and  $b \coloneqq \sum_{i \geq 1} b_{i}  < \infty$ and  $\rho \coloneqq \sum_{i \geq 1} \pi_{i}  < \infty$, is defined by the infinitesimal generator  $Q = (q_{i,j})_{i,j \in \mathbb{N}_{0}}$ given by
\begin{align} \label{Qmatrix}
q_{i,j} = \left\{ \begin{array}{ll}
 i \pi_{j-i} + i(i-1)b_{j-i} & \mbox{  if }  i \geq 1 \, \, \text{and} \, \, j >i, \\
 di +c i (i-1)  & \mbox{  if }  i \geq 1 \, \, \text{and} \, \, j = i-1, \\
 -i(d + \rho + (b + c) (i-1)  )  & \mbox{  if }  i \geq 1 \, \, \text{and} \, \, j =i,  \\
 0 &  \text{otherwise}. \\
    \end{array}
    \right.
\end{align}
\noindent For $z \in \mathbb{N}_{0}$, we denote by $(Z_{t}^{(z)})_{t \geq 0}$ a BPI process issued from $z$ (i.e., $Z_{0}^{(z)} = z$). 

From the infinitesimal generator   \eqref{Qmatrix}, or $Q$-matrix, we observe that the additional death rate $cz(z-1)$ represents competition, where one of the  $z$
 individuals selects another from the remaining  $z-1$
 at a constant rate $c$ and kills it. The additional birth rate  $bz(z-1)$
 reflects cooperative interactions, where pairs of individuals produce 
$i$ new individuals with probability $b_i/b$.\\

Let 
\begin{align}
\mathbf{m}_{\rm int} \coloneqq-c+\sum_{i \geq 1} ib_{i}.
\end{align}
According to Gonz\'alez-Casanova et al.\ \cite{Pa2021}, this quantity influences the long-term behaviour of BPI processes. A BPI process is termed supercritical, critical, or subcritical cooperative depending on whether  $\mathbf{m}_{\rm int}>0$, $\mathbf{m}_{\rm int}=0$ or $\mathbf{m}_{\rm int}<0$, respectively.  Theorem 1 in \cite{Pa2021} shows that in the supercritical regime, and even under
\begin{align}
 \mathbf{m}_{\rm br} \coloneqq-d+\sum_{i \geq 1} i\pi_{i} < \infty,
\end{align}
\noindent the BPI process may explode in finite time with positive probability. In the subcritical and critical cooperative regime, and under  $\mathbf{m}_{\rm br}<\infty$, the  BPI process does not explode in finite time almost surely. More recently, the authors  \cite{Berzunza2020} studied the explosion event for general subcritical and critical cooperative BPI processes (i.e.\ not necessarily satisfying that $\mathbf{m}_{\rm br}<\infty$) and provide general integral tests, see Theorems 1.1 and 1.4 in \cite{Berzunza2020} for further details.

When $c=b=0$, the BPI process reduces to the  BGW process. A special case of BPI processes is the so-called logistic branching process (LB-process), deeply studied in \cite{Lambert2005}, which corresponds to $c > 0$ and $b = 0$. Another example is the block counting process of the renowned Kingman coalescent, which corresponds to a BPI process with no branching or cooperation events (i.e., $c>0$ and $d = \rho = b = 0$); see \cite{Kingman1982}. More generally, when $c, \rho >0$ and $d=b=0$, the BPI-process is exactly the block counting process of an exchangeable fragmentation-coalescence (EFC for short) process, introduced in \cite{BerestyckiJ2004}, where the coagulation mechanism features only the Kingman component. Moreover, the BPI process can be viewed as the counting process of a particle system where particles can coagulate (via competition interactions), fragment without collision (via branching), or fragment due to collisions (via cooperative interactions). Ultimately, when $\pi_i = b_i = 0$ for all $i \ge 2$, the BPI process reduces to a classic birth-and-death process.

Motivated by studies on population dynamics with initially large populations, the phenomenon of {\it coming down from infinity} has attracted significant attention. This phenomenon describes a scenario where a random process starting from an infinite number of individuals (or blocks) immediately reaches a finite state. Kingman \cite{Kingman1982} was the first to observe this phenomenon in the context of the Kingman coalescent. Subsequently, Schweinsberg \cite{Schweinsberg2000} provided necessary and sufficient conditions for the occurrence of this phenomenon in the setting of the $\Lambda$-coalescent. Further work, for the $\Lambda$-coalescent, includes studies of the speed of coming down from infinity by Berestycki et al.\ \cite{Bere2010} and second-order asymptotics by Limic and Talarczyk \cite{Vlada20152, Vlada2015}. The case of exchangeable fragmentation-coalescence processes (EFC processes) was treated in Berestycki \cite{BerestyckiJ2004} where  a sufficient condition for the process to come down from infinity is provided.
 Recent studies include those by Foucart \cite{Foucart2022}, exploring the phenomenon of coming down from infinity for EFC processes with multiple coagulations but not simultaneous, as in the $\Lambda$-coalescent, and the fragmentation dislocates, at a finite rate, an individual block into sub-blocks of infinite size. Bansaye et al.\ \cite{Bansaye2016} described the speed of coming-down from infinity for birth-and-death processes that eventually become extinct. More recently, Bansaye \cite{Bansaye2019} developed  a general framework for the phenomenon across various stochastic processes with jumps described by  stochastic differential equations. Finally, it is important to note that the phenomenon of coming down from infinity for subcritical cooperative BPI processes was first studied by  Gonz\'alez-Casanova et al.\ \cite{Pa2021} under the assumption that $ \mathbf{m}_{\rm br} < \infty$. The authors \cite{Berzunza2020} established a sufficient condition for general subcritical and critical cooperative BPI processes to come down from infinity. However, nothing is known about the speed  at which BPI processes come down from infinity.
 
Our aim is to determine the speed at which a subcritical BPI process comes down from infinity. In the subcritical cooperative regime (i.e.\ $\mathbf{m}_{\rm int}<0$), under the assumption that $\mathbf{m}_{\rm br}<\infty$ and the cooperation parameters $(b_{i})_{i \geq 1}$ have a bit more than a finite first moment (see \eqref{Assump3TWO}), we show that the speed of coming down from infinity is asymptotically equivalent to $(-\mathbf{m}_{\rm int}t)^{-1}$. This is similar to the Kingman coalescent case where $-\mathbf{m}_{\rm int}=c$. 
 
Finally, we also show, under the assumption that the branching and cooperation parameters have finite large positive exponential moments, the convergence in the $\mathbb{L}_p$ sense (for $p\in(0,2]$), of the subcritical cooperative BPI process rescaled by its speed of coming down from infinity. Moreover, we also obtain the second-order fluctuations around the speed of coming down from infinity  at small times.

\section{Main results} \label{MainResultSec}

We first present our main results on the speed at which subcritical cooperative BPI processes come down from infinity. In the second part of this section, we examine the second-order fluctuations around this speed, assuming the existence of large exponential moments for the branching and cooperation parameters.

\subsection{The speed of coming down from infinity}

We henceforth consider the subcritical cooperative regime, i.e., when $\mathbf{m}_{\rm int}<0$. Additionally, we assume that the branching parameter has a finite first moment, i.e., $ \mathbf{m}_{\rm br}<\infty$. Under these assumptions, we have  $c>0$,  $\sum_{i \geq 1} ib_{i} < \infty$ and  $\sum_{i \geq 1} i\pi_{i} < \infty$. 
Furthermore, the BPI process does not explode in finite time; see for instance Example 2.1 in \cite{Berzunza2020} or Theorem 1 in \cite{Pa2021}.

We also require the following moment condition on the cooperative parameters,
\begin{equation} \label{Assump3TWO}
\text{there exists} \quad \alpha \in (1,2] \quad \text{such that} \quad \sum_{i=1}^{\infty} i^{\alpha} b_{i} < \infty. \tag{${\bf H1}$}
\end{equation}
Examples of sequences that satisfy the previous hypothesis include regularly varying sequences. More precisely, let  $\beta \in (1,2]$ such that $b_{i} = i^{-\beta -1} L(i)$, as $i \rightarrow \infty$, where $L: \mathbb{R}_{+} \rightarrow \mathbb{R}_{+}$ is a slowly varying function at infinity. According to Lemma 4.4 in \cite{BJS2015},  for any $\delta \in (1,\beta)$, 
\begin{align}
\sum_{i \geq n} i^{\delta}b_{i} \sim (\beta -\delta)^{-1} n^{-(\beta -\delta)} L(n), \qquad \textrm{as}\qquad n \rightarrow \infty.
\end{align}
By using, for example, (1) in Lemma 4.1 in \cite{BJS2015} with $\varepsilon \in (0, \beta-\delta)$, we find that $\sum_{i \geq n} i^{\delta}b_{i} \rightarrow 0$, as $n \rightarrow \infty$. This implies that $\sum_{i \geq 1} i^{\delta}b_{i} < \infty$, thus satisfying  \eqref{Assump3TWO} with $\alpha =\delta$.\\

We  equip $\overline{\mathbb{R}}:=\mathbb{R}\cup \{\infty\}$ with the distance $\bar{d}(x,y) := |e^{-x}-e^{-y}|$, for $x,y \in \overline{\mathbb{R}}$. Let $\mathbb{D}(\mathbb{R}_{+}, \overline{\mathbb{R}})$ be the space of c\`adl\`ag functions from  $\mathbb{R}_{+} \coloneqq [0, \infty)$ to $\overline{\mathbb{R}}$ and endowed  with the Skorokhod ${\rm J}_{1}$ topology (see e.g., \cite[Chapter 3]{Billingsley1999}). 
\begin{theorem} \label{Main1}
Suppose that  $\mathbf{m}_{\rm int}<0$, $\mathbf{m}_{\rm br}<\infty $ and \eqref{Assump3TWO} hold. Then, the family $(Z^{(z)})_{z \in \mathbb{N}_{0}}$ of BPI processes converges weakly, in $\mathbb{D}(\mathbb{R}_{+}, \overline{\mathbb{R}})$, as $z \rightarrow \infty$, towards $Z^{(\infty)} = (Z^{(\infty)}_{t})_{t \geq 0}$, that is, a BPI process starting from $\infty$. Moreover,
\begin{align}
\mathbb{P}\Big(Z^{(\infty)}_{t} < \infty, \, \,\,\, \text{ for all } \, \, t >0\Big) = 1
\end{align}
and
\begin{align} \label{MainConv}
 \lim_{t \downarrow 0}  (-\mathbf{m}_{\rm int})t\,Z_{t}^{(\infty)} = 1, \qquad \text{almost surely}.
\end{align}
\end{theorem}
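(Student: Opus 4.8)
The plan is to compare the BPI process with two more tractable reference processes that sandwich it at small times, and to show that both the upper and lower reference processes, once suitably renormalised, converge to the constant $-\mathbf{m}_{\rm int}t^{-1}$. The natural upper reference is the pure competition/death part: ignoring all cooperative births, the process is dominated (for the competitive mechanism) by the block counting chain of a Kingman coalescent with rate $c$, whose coming-down speed is classically $c\,t^{-1}$; since $-\mathbf{m}_{\rm int}\ge c$ only after subtracting $\sum_i i b_i$, one must be slightly careful, but the point is that adding linear births ($d,\pi$) and competitive deaths does not change the leading-order $t^{-1}$ behaviour, only the constant. For the genuinely matching bound one wants to exploit the quadratic terms directly. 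I would set $g(z):=\mathbb{E}[z - Z^{(z)}_h]/h$-type infinitesimal estimates: from the $Q$-matrix, the drift of $Z$ is
\begin{align}
\mathcal{A}f(z) = z\Big(\sum_{i\ge1} i\pi_i - d\Big) f'(z) + z(z-1)\Big(\sum_{i\ge1} i b_i - c\Big)f'(z) + (\text{higher-order terms}),
\end{align}
so the dominant drift when $z$ is large is $z(z-1)\,\mathbf{m}_{\rm int}<0$, i.e. of order $\mathbf{m}_{\rm int}z^2$. This is exactly the ODE $\dot{x} = \mathbf{m}_{\rm int}x^2$, whose solution from $+\infty$ is $x(t) = -1/(\mathbf{m}_{\rm int}t) = 1/((-\mathbf{m}_{\rm int})t)$, which is the claimed speed.

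Concretely I would proceed as follows. First, establish tightness and the weak convergence $Z^{(z)}\Rightarrow Z^{(\infty)}$ in $\mathbb{D}(\mathbb{R}_+,\overline{\mathbb{R}})$: monotonicity in the initial condition (which should follow from a coupling of the BPI processes for different starting points — births add the same increments, competitive deaths can be coupled monotonically) gives an a.s. increasing limit $Z^{(\infty)}_t := \lim_{z\to\infty} Z^{(z)}_t$, and one must check this limit is càdlàg and does not explode; the non-explosion/coming-down instantly is inherited from the sufficient condition for coming down from infinity already available in \cite{Berzunza2020} (the subcritical cooperative case with $\mathbf{m}_{\rm br}<\infty$ and \eqref{Assump3TWO} falls under it). Second, prove the upper bound $\limsup_{t\downarrow 0}(-\mathbf{m}_{\rm int})t\,Z^{(\infty)}_t \le 1$ a.s.: fix a small $\varepsilon>0$, apply the generator to a well-chosen test function (e.g. $f(z)=1/z$ or $f(z)=\log z$) to show that $1/Z^{(\infty)}_t - (-\mathbf{m}_{\rm int})t$ is, up to a controllable martingale and lower-order error terms, non-negative for small $t$; the martingale part is controlled in $\mathbb{L}^2$ via the quadratic variation, where hypothesis \eqref{Assump3TWO} enters to bound the contribution of large cooperative jumps (an $i^\alpha$-moment makes the relevant jump-size sums summable against $1/z^2$-type weights). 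Third, the matching lower bound $\liminf_{t\downarrow 0}(-\mathbf{m}_{\rm int})t\,Z^{(\infty)}_t \ge 1$: here, because $Z^{(\infty)}_t\to\infty$ as $t\downarrow 0$, the quadratic drift genuinely dominates, and a Gronwall-type comparison of $1/Z^{(\infty)}_t$ with the ODE solution $(-\mathbf{m}_{\rm int})t$ closes the gap — again modulo a martingale term that vanishes faster.

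The main obstacle I anticipate is the lower bound near $t=0$, i.e. controlling the process precisely while it is still enormous. The difficulty is twofold: (i) the competitive death jumps are of size $1$ but occur at rate $\sim cz^2$, so over a short interval $[0,t]$ there are order $1/t$ of them and the fluctuations must be shown to be negligible relative to the mean drift $\sim 1/t$ — this is a law-of-large-numbers statement that needs the second-moment (quadratic variation) estimate to be uniform as $z\to\infty$; (ii) the cooperative jumps, though they push the process \emph{up} (the "wrong" way), occur at rate $\sim b z^2$ and could in principle slow the descent, so one needs \eqref{Assump3TWO} to guarantee that the cumulative upward displacement from cooperation is genuinely of lower order than $z$ itself over the relevant timescale; without a moment slightly better than the first, a heavy-tailed cooperative jump could be of the same order as $z$ and destroy the $t^{-1}$ rate. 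A clean way to handle both is to work with the stopped/localised process, derive a differential inequality for $\mathbb{E}[1/Z_t]$ (or for $\mathbb{E}[Z_t^{-1}\wedge M]$), integrate it, and then upgrade the mean statement to an almost-sure one along a sequence $t_n\downarrow0$ via Borel–Cantelli using the $\mathbb{L}^2$-control, finally filling the gaps between $t_n$'s by monotonicity of $t\mapsto t Z_t$ up to the lower-order corrections. I would expect the cooperative-jump bookkeeping under \eqref{Assump3TWO} to be the most delicate piece of the write-up.
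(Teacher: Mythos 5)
Your proposal lands on the same underlying mechanism that the paper uses — comparison of the renormalised process with the deterministic flow of $\dot x = \mathbf{m}_{\rm int}x^2$, a logarithmic-scale test function, and the use of \eqref{Assump3TWO} to control the second moment of the cooperative jumps — but the paper does not carry out the Gronwall/Borel–Cantelli argument by hand. It instead verifies the hypotheses (Assumptions 4.1 and 4.3, nonexpansiveness of the vector field in the $\log$-scale, finiteness of $\int_0^t\widehat V_\varepsilon(z_0,s){\rm d}s$) of the abstract Proposition 4.4 and Theorem 4.5 of Bansaye \cite{Bansaye2019}, which package precisely the "flow comparison plus martingale maximal inequality" you are trying to rebuild from scratch. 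Lemma \ref{lemma1} checks local Lipschitzness and the $(0,\kappa)$-nonexpansiveness of $\psi(z)=e^{-z}\varphi(e^z-1)$; Lemma \ref{lemma2} identifies $\varphi(z)\sim\mathbf{m}_{\rm int}z^2$ and hence the flow speed $v_t\sim-1/(\mathbf{m}_{\rm int}t)$ (Lemma \ref{lemma6}); Lemma \ref{lemma1TWO} is where \eqref{Assump3TWO} enters, bounding the quadratic variation of the martingale part by $C_\alpha(z^{2-\alpha}+1)$. Your intuition about where \eqref{Assump3TWO} is needed and why the small-$t$ regime is delicate is correct.

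However, the proposal as written has genuine gaps that the paper's machinery is specifically designed to close. The key one is your step ``finally filling the gaps between $t_n$'s by monotonicity of $t\mapsto tZ_t$.'' This map is \emph{not} monotone: $Z^{(\infty)}$ jumps both up (births/cooperation) and down (deaths/competition), so $tZ_t$ has no monotonicity to interpolate with, and pointwise $\mathbb{L}^2$-control at a sequence $t_n\downarrow 0$ plus Borel–Cantelli does not by itself yield an almost-sure statement about $\lim_{t\downarrow 0}$. What is actually needed is uniform-in-time control on $\sup_{s\le t}|\log((Z_s+1)/(\phi(z,s)+1))|$, which requires (i) the nonexpansiveness of the flow in the $\log$-coordinate — this is why $\log(1+z)$, and not $1/z$, is the right chart: it turns the quadratic decay of $\varphi$ into a vector field with a uniform one-sided Lipschitz bound, so that the deviation from the flow does not amplify — and (ii) a maximal (exponential) martingale inequality for the jump martingale, not just a second-moment bound at fixed times. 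Both are exactly the content of Bansaye's Lemma 2.1 and Theorem 4.5, which the paper invokes. A second, smaller gap: your ``upper reference'' comparison with a pure Kingman coalescent is the wrong way round — dropping the cooperative births makes the process \emph{smaller}, and since $-\mathbf{m}_{\rm int}=c-\sum_i ib_i<c$, the Kingman speed $ct^{-1}$ strictly exceeds the true speed; so this only yields a one-sided bound with the wrong constant, as you in fact note. The matching bound must come from the flow comparison; there is no way around it. Finally, your appeal to ``the sufficient condition for coming down from infinity already available in \cite{Berzunza2020}'' conflates the qualitative fact of coming down with the quantitative speed: the paper uses \cite{Berzunza2020} only for non-explosion, and derives the coming-down itself (together with the speed) simultaneously from Bansaye's framework.
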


When branching and cooperation are absent (i.e., $d = \rho = 0$ and $b = 0$), Theorem \ref{Main1} recovers the well-known result stating that the number of blocks in Kingman's coalescent is asymptotically equivalent to $(c t)^{-1}$, almost surely as $t \downarrow 0$ (see, e.g., \cite{Bere2010}). Recall that the BPI process corresponds to the block counting process of an EFC process with a coagulation mechanism featuring only the Kingman component and a fragmentation measure with no mass on partitions with infinitely many blocks, i.e.\ $c, \rho > 0$ and $d = b = 0$. Then, Theorem \ref{Main1} provides the first result on   the speed of coming down from infinity for EFC processes extending the known result in Proposition 15 of \cite{BerestyckiJ2004} (see also Corollary 1.2 in \cite{Foucart2022}), which asserts that the EFC process comes down from infinity. In general, under the assumptions   $\mathbf{m}_{\rm int}<0$, $\mathbf{m}_{\rm br}<\infty $ and \eqref{Assump3TWO}, Theorem \ref{Main1} establishes  that the Kingman component predominantly determines the speed at which the  subcritical cooperative BPI process comes down from infinity. \\

Our second main result in this section shows that the convergence \eqref{MainConv} in Theorem \ref{Main1} also holds in the $\mathbb{L}_{p}$ sense, for $p \in (0,2]$. However, this result  requires the following additional assumption, 
\begin{equation} \label{Assump4}
\textrm{there exists}\quad \vartheta >e^{288}\quad  \textrm{such that } \quad \sum_{i \geq 1} \vartheta^{i} \pi_{i} < \infty\quad \textrm{ and }\quad \sum_{i \geq 1} \vartheta^{i} b_{i} < \infty. \tag{${\bf H4}$}
\end{equation}
\noindent It is important to emphasize that the previous assumption can be made for any value of $\vartheta>1$, though this introduces a dependency of $\vartheta$ on the values of $p$ for convergence within $\mathbb{L}_{p}$.

Observe that \eqref{Assump4} implies that $\boldsymbol{\sigma}^{2}_{{\rm br}} \coloneqq d + \sum_{i \geq 1} i^{2} \pi_{i} <\infty$ (and thus $\mathbf{m}_{\rm br} <\infty$). Similarly, \eqref{Assump4} also implies that $\boldsymbol{\sigma}^{2}_{{\rm int}} \coloneqq c+\sum_{i=1}^{\infty} i^{2} b_{i} < \infty$ (i.e., \eqref{Assump3TWO} holds with $\alpha =2$).
\begin{theorem} \label{PropC2}
Suppose that  $\mathbf{m}_{\rm int}<0$ and  \eqref{Assump4} hold. Then, for $p \in (0,2]$, we have that
\begin{align} 
\lim_{t \downarrow 0}\mathbb{E}\Big[ \sup_{s \leq t}  \Big| - \mathbf{m}_{\rm int} s\, Z_{s}^{(\infty)}- 1 \Big|^{p} \Big] = 0. 
\end{align}
\end{theorem}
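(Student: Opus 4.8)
The plan is to upgrade the almost-sure convergence from Theorem~\ref{Main1} to $\mathbb{L}_p$ convergence via a uniform integrability argument, which for $p\in(0,2]$ reduces to an $\mathbb{L}_2$ bound on the rescaled process near $0$. Concretely, it suffices to show that the family $\{\sup_{s\le t}|-\mathbf{m}_{\rm int}s\,Z^{(\infty)}_s|^2\}_{t\le 1}$ is bounded in $\mathbb{L}_1$, i.e.\ $\sup_{t\le 1}\mathbb{E}[\sup_{s\le t}(\mathbf{m}_{\rm int}s\,Z^{(\infty)}_s)^2]<\infty$; combined with the a.s.\ convergence $-\mathbf{m}_{\rm int}s\,Z^{(\infty)}_s\to 1$ and Vitali's convergence theorem (for $p=2$, and then a fortiori for $p<2$ by Jensen/Hölder since the $t\le 1$ window keeps things bounded), this yields the claim. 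The key analytic input is that assumption \eqref{Assump4} gives finite variances $\boldsymbol{\sigma}^2_{\rm br},\boldsymbol{\sigma}^2_{\rm int}<\infty$, so the semimartingale decomposition of $Z^{(\infty)}$ has a tractable quadratic variation.

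First I would set up the semimartingale decomposition: writing $Z=Z^{(\infty)}$, one has $Z_t = Z_0 + \int_0^t (\mathbf{m}_{\rm br}Z_s + \mathbf{m}_{\rm int}Z_s(Z_s-1))\,ds + M_t$, where $M$ is a (local) martingale with predictable quadratic variation $\langle M\rangle_t = \int_0^t (\boldsymbol{\sigma}^2_{\rm br}Z_s + \boldsymbol{\sigma}^2_{\rm int}Z_s(Z_s-1) + \text{(lower order)})\,ds$; here I would be a bit careful, since $Z_0=\infty$, and instead work on the process started from $z$, establish estimates uniform in $z$, and pass to the limit. The natural object to control is $g(Z_s)$ for a well-chosen test function, or more directly $Z_s$ itself together with the observation that by subcriticality the drift $\mathbf{m}_{\rm int}Z_s(Z_s-1)$ is strongly mean-reverting (negative, quadratic). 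A clean route: show that $t\mapsto \mathbb{E}[Z^{(z)}_t]$ and $t\mapsto\mathbb{E}[(Z^{(z)}_t)^2]$ satisfy differential inequalities of Riccati type whose solutions are dominated, uniformly in $z$, by $C/t$ and $C/t^2$ respectively for $t\le 1$ — this is exactly the kind of estimate that produces the $t^{-1}$ speed and should be already implicit in the proof of Theorem~\ref{Main1}. Then $\mathbb{E}[(\mathbf{m}_{\rm int}t\,Z^{(z)}_t)^2]\le C$ uniformly.

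To control the \emph{supremum} over $s\le t$ rather than the value at time $t$, I would apply Doob's $\mathbb{L}_2$ maximal inequality to the martingale part and a monotonicity/convexity argument (or the explicit drift bound) to the finite-variation part; the point is that $s\,Z_s$ behaves, after the first time it drops below a large level, like a process with bounded fluctuations, so $\mathbb{E}[\sup_{s\le t}(\mathbf{m}_{\rm int}s\,Z_s)^2]\le C$ for all $t\le 1$. The role of the strong exponential-moment hypothesis \eqref{Assump4} (with the specific constant $e^{128\sqrt 2}$) is presumably to invoke the second-order machinery developed elsewhere in the paper — concentration of $Z^{(z)}_t$ around $(-\mathbf{m}_{\rm int}t)^{-1}$ with Gaussian-type tails — which gives uniform integrability directly and with room to spare; I would use that in place of, or to reinforce, the crude moment bounds. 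Finally, pass from the $z$-indexed estimates to $Z^{(\infty)}$ using the weak convergence in $\mathbb{D}(\mathbb{R}_+,\overline{\mathbb{R}})$ from Theorem~\ref{Main1} together with the uniform $\mathbb{L}_2$ bound (Skorokhod representation plus Fatou on the lower semicontinuous functional $\sup_{s\le t}|\cdot|^2$), then conclude with Vitali.

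The main obstacle I anticipate is handling the singularity at $t=0$ rigorously: the process starts from $\infty$, so every moment of $Z^{(\infty)}_0$ is infinite, and one must extract the correct $t^{-1}$ (resp.\ $t^{-2}$) blow-up rate for the first and second moments \emph{uniformly} in the approximating initial condition $z$, with enough uniformity to control the supremum functional. Getting the Riccati-type comparison to close — in particular that the quadratic negative drift $\mathbf{m}_{\rm int}\mathbb{E}[Z_s^2]$ dominates the $\mathbf{m}_{\rm br}\mathbb{E}[Z_s]$ and variance-generated positive terms for small $s$ — and then converting the value-at-$t$ estimate into a uniform sup-estimate via Doob, is where the real work lies; the exponential-moment assumption \eqref{Assump4} is the lever that makes the needed concentration estimates available.
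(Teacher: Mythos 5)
Your high-level reduction is sound: $t\mapsto\sup_{s\le t}|-\mathbf{m}_{\rm int}sZ_s^{(\infty)}-1|^p$ is nondecreasing in $t$ and converges to $0$ a.s.\ as $t\downarrow 0$ by Theorem~\ref{Main1}, so the conclusion for all $p\in(0,2]$ follows by dominated convergence once one shows $\mathbb{E}\big[\sup_{s\le t_0}|-\mathbf{m}_{\rm int}sZ_s^{(\infty)}-1|^2\big]<\infty$ for a single $t_0>0$ (this is really monotone domination plus DCT rather than Vitali, and for $p<2$ Jensen suffices). You also correctly anticipate that \eqref{Assump4} is there to deliver a concentration estimate. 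That is exactly what the paper does: Lemma~\ref{lemma3} (via the change of variables $z\mapsto\log(1+z)$, the supermartingale bound of Lemma~\ref{lemmaA1}, and the refinement of Lemma 4.6 in \cite{Bansaye2019}) yields $\mathbb{P}\big(\sup_{u\le t}|\log((Z_u^{(\infty)}+1)/(v_u+1))|\ge\varepsilon\big)\le e^{-\varepsilon\ln\theta/(64\sqrt 2)}$, whose polynomial tail has exponent $\log\vartheta/(64\sqrt 2)>2$ precisely because $\vartheta>e^{128\sqrt 2}$; integrating the tail gives the uniform $L^p$ bound \eqref{eq13}, and then DCT finishes.

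Your \emph{primary} route, however --- Riccati-type ODE bounds giving $\mathbb{E}[(Z_t^{(z)})^2]\le C t^{-2}$ uniformly in $z$, followed by Doob for the martingale part of $sZ_s$ and a direct control of the drift --- has a genuine gap at the supremum step. The finite-variation part of $-\mathbf{m}_{\rm int}sZ_s$ contains the two terms $-\mathbf{m}_{\rm int}\int_r^t Z_s\,{\rm d}s$ and $(\mathbf{m}_{\rm int})^2\int_r^t sZ_s(Z_s-1)\,{\rm d}s$; under $Z_s\approx(-\mathbf{m}_{\rm int}s)^{-1}$ each behaves like $\int_r^t s^{-1}\,{\rm d}s$ and is individually log-divergent as $r\downarrow 0$, so you cannot ``bound the finite-variation part'' term by term: you must exhibit the cancellation. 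The paper does this in Lemmas~\ref{lemma9}--\ref{lemma12} by rewriting the drift as $-\int s^{-1}(-s\mathbf{m}_{\rm int}Z_s-1)\,{\rm d}s+U_t$ and invoking Lemma 10 of \cite{Bere2010}, but the resulting sup-estimate (Lemma~\ref{lemma11}) \emph{takes Theorem~\ref{PropC2} as an input}; pursuing this path as you outline it would therefore be circular. (Also, the pointwise moment bounds $\mathbb{E}[Z_t^{(z)}]\lesssim t^{-1}$, $\mathbb{E}[(Z_t^{(z)})^2]\lesssim t^{-2}$ are \emph{not} ``already implicit in the proof of Theorem~\ref{Main1}'' --- that proof goes through the a.s.\ framework of \cite{Bansaye2019} and produces no moment estimates.) To make your route self-contained you would need a direct Gronwall-type bound on $\mathbb{E}[\sup_{s\le t}(sZ_s^{(z)})^2]$ that does not presuppose the $L^p$ convergence; the paper sidesteps this entirely by extracting the one-time $L^2$ bound from the exponential concentration of Lemma~\ref{lemma3}.
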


Indeed, the result in Theorem \ref{PropC2} holds for any $p>0$, provided that \eqref{Assump4} is satisfied for sufficiently large $\vartheta$ (see Section \ref{ProofofPropC2} for further details).

\subsection{Second order fluctuations}

Our final main result investigates the second-order fluctuations of $- \mathbf{m}_{\rm int} t\, Z_{t}^{(\infty)}$ for small times. More precisely, we will study the asymptotic behaviour in a functional sense, as $\varepsilon \downarrow 0$, of the process $(X_{t}^{(\varepsilon)})_{t \geq 0}$, for $\varepsilon >0$, defined by
\begin{align} \label{FlucD}
X_{0}^{(\varepsilon)}=0 \quad \text{and} \quad  X_{t}^{(\varepsilon)} = \varepsilon^{-\frac{1}{2}} \Big( - \mathbf{m}_{\rm int} \varepsilon t\, Z_{\varepsilon t}^{(\infty)} - 1 \Big), \quad \text{for} \quad t >0.  
\end{align}
\noindent In the next result, let $\mathbb{D}(\mathbb{R}_{+}, \mathbb{R})$ denote the space of c\`adl\`ag functions from  $\mathbb{R}_{+}$ to $\mathbb{R}$ (here, $\mathbb{R}$ is equipped with the Euclidean distance), endowed with the Skorokhod ${\rm J}_{1}$ topology. 

\begin{theorem} \label{Main3}
Suppose that  $\mathbf{m}_{\rm int}<0$ and \eqref{Assump4} hold. The process $(X_{t}^{(\varepsilon)})_{t \geq 0}$ converges weakly, in $\mathbb{D}(\mathbb{R}_{+}, \mathbb{R})$, as $\varepsilon \downarrow 0$, to a Gaussian process $(X_{t})_{t \geq 0}$ defined by
\begin{align}
X_{0}=0 \quad \text{and} \quad  X_{t} = \frac{\boldsymbol{\sigma}_{{\rm int}}}{t} \int_{0}^{t} u \,{\rm d}W_{u}, \quad \text{for} \quad t >0, 
\end{align}
\noindent where $(W_{t})_{t \geq 0}$ is a standard Brownian motion. 
\end{theorem}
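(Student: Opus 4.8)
The plan is to write $X^{(\varepsilon)}$ as the solution of a linear random ODE whose driving term is a suitably rescaled version of the martingale part of $Z^{(\infty)}$, to show by a functional central limit theorem for martingales that this driving term converges to a multiple of Brownian motion, and to conclude by a continuous mapping argument. Set $\kappa\coloneqq-\mathbf{m}_{\rm int}>0$. Applying the generator \eqref{Qmatrix} to $x\mapsto x$ and $x\mapsto x^{2}$, one gets that for every $t_{0}>0$, on $[t_{0},\infty)$,
\begin{align}
Z_{t}^{(\infty)}=Z_{t_{0}}^{(\infty)}+\int_{t_{0}}^{t}\Big(\mathbf{m}_{\rm br}Z_{s}^{(\infty)}-\kappa Z_{s}^{(\infty)}\big(Z_{s}^{(\infty)}-1\big)\Big)\mathrm{d}s+\big(M_{t}-M_{t_{0}}\big),
\end{align}
with $M$ a purely discontinuous local martingale satisfying $\mathrm{d}\langle M\rangle_{s}=\big(\boldsymbol{\sigma}_{\rm br}^{2}Z_{s}^{(\infty)}+\boldsymbol{\sigma}_{\rm int}^{2}Z_{s}^{(\infty)}(Z_{s}^{(\infty)}-1)\big)\mathrm{d}s$. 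Since $Z^{(\infty)}_{s}$ is of order $s^{-1}$ near the origin, the drift and the bracket integrate a function of order $s^{-2}$ there, so $M$ is only a local martingale from $0$; consequently every integral below will carry a weight vanishing at $0$. I would also record, as a by-product of the proof of Theorem \ref{PropC2}, the uniform moment bounds $\sup_{s\le t}\mathbb{E}[(sZ_{s}^{(\infty)})^{q}]\le C_{q}(t)<\infty$ for $q\le 2$ (and for all $q$ under \eqref{Assump4}), which make the weighted integrals below genuine square-integrable martingales.

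Next, put $R_{t}\coloneqq\kappa t Z_{t}^{(\infty)}$, so $R_{t}\to1$ as $t\downarrow0$ by Theorem \ref{PropC2}. Integration by parts gives $\mathrm{d}R_{t}=t^{-1}R_{t}(1-R_{t})\mathrm{d}t+(\mathbf{m}_{\rm br}+\kappa)R_{t}\mathrm{d}t+\kappa t\,\mathrm{d}M_{t}$, and, writing $R_{t}(1-R_{t})=-(R_{t}-1)-(R_{t}-1)^{2}$, performing the time change $t\mapsto\varepsilon t$ and dividing by $\sqrt{\varepsilon}$, one obtains after a direct computation
\begin{align}
X_{t}^{(\varepsilon)}=-\int_{0}^{t}\frac{X_{s}^{(\varepsilon)}}{s}\mathrm{d}s+N_{t}^{(\varepsilon)}+\mathcal{E}_{t}^{(\varepsilon)},\qquad N_{t}^{(\varepsilon)}\coloneqq\frac{\kappa}{\sqrt{\varepsilon}}\int_{0}^{\varepsilon t}u\,\mathrm{d}M_{u},
\end{align}
where $\mathcal{E}_{t}^{(\varepsilon)}=-\sqrt{\varepsilon}\int_{0}^{t}s^{-1}(X_{s}^{(\varepsilon)})^{2}\mathrm{d}s+\sqrt{\varepsilon}(\mathbf{m}_{\rm br}+\kappa)\int_{0}^{t}(1+\sqrt{\varepsilon}\,X_{s}^{(\varepsilon)})\mathrm{d}s$. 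This linear equation is solved explicitly: computing $\mathrm{d}(tX_{t}^{(\varepsilon)})$ and using that $tX_{t}^{(\varepsilon)}\to0$ as $t\downarrow0$ (valid for fixed $\varepsilon$ since $R_{\varepsilon t}\to1$) gives
\begin{align}
X_{t}^{(\varepsilon)}=N_{t}^{(\varepsilon)}-\frac{1}{t}\int_{0}^{t}N_{s}^{(\varepsilon)}\mathrm{d}s+\frac{1}{t}\int_{0}^{t}s\,\mathrm{d}\mathcal{E}_{s}^{(\varepsilon)},
\end{align}
and in the last term the weight $s$ cancels the factor $s^{-1}$, so that it equals $-\tfrac{\sqrt{\varepsilon}}{t}\int_{0}^{t}(X_{s}^{(\varepsilon)})^{2}\mathrm{d}s+O_{P}(\sqrt{\varepsilon})$ uniformly on compacts.

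To identify the limit of the driving martingale, a change of variables yields $\langle N^{(\varepsilon)}\rangle_{t}=\boldsymbol{\sigma}_{\rm int}^{2}\int_{0}^{t}R_{\varepsilon s}^{2}\mathrm{d}s+(\boldsymbol{\sigma}_{\rm br}^{2}-\boldsymbol{\sigma}_{\rm int}^{2})\kappa\varepsilon\int_{0}^{t}sR_{\varepsilon s}\mathrm{d}s$; since $\sup_{s\le t}|R_{\varepsilon s}-1|\to0$ in $\mathbb{L}_{2}$ (Theorem \ref{PropC2}), $\langle N^{(\varepsilon)}\rangle_{t}\to\boldsymbol{\sigma}_{\rm int}^{2}t$ in probability, uniformly on compacts. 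For the Lindeberg-type jump condition I would use \eqref{Assump4}: the downward jumps of $N^{(\varepsilon)}$ have size at most $\kappa\sqrt{\varepsilon}\,t$ and hence eventually fall below any threshold, while the large upward jumps contribute, once the $s^{2}$ weight inside the bracket has absorbed the blow-up of $(Z^{(\infty)}_{\varepsilon s})^{2}$, at most $\mathbb{E}[\int_{0}^{t}R_{\varepsilon s}^{2}\sum_{k>\eta/(\kappa\sqrt{\varepsilon}s)}k^{2}b_{k}\,\mathrm{d}s]$ plus a lower-order branching term, which tends to $0$ (split the $s$-integral at a small $\delta$ and use $\sum_{k}k^{2}b_{k}<\infty$, which follows from \eqref{Assump4}). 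Therefore, by the functional martingale central limit theorem, $N^{(\varepsilon)}\Rightarrow\boldsymbol{\sigma}_{\rm int}W$ in $\mathbb{D}(\mathbb{R}_{+},\mathbb{R})$. Finally, a Gr\"onwall argument on the linear identity above, fed by the $p=2$ case of Theorem \ref{PropC2} in the form $\sup_{s\le t}\mathbb{E}[(X_{s}^{(\varepsilon)})^{2}]\le C(t)$ uniformly in $\varepsilon$, shows that $\tfrac1t\int_{0}^{t}s\,\mathrm{d}\mathcal{E}_{s}^{(\varepsilon)}\to0$ in probability, uniformly on compacts. To conclude, the functional $\Phi(n)_{t}\coloneqq n_{t}-t^{-1}\int_{0}^{t}n_{s}\mathrm{d}s$, with $\Phi(n)_{0}\coloneqq0$, is continuous on $\mathbb{D}(\mathbb{R}_{+},\mathbb{R})$ at continuous paths vanishing at $0$, and $X_{t}^{(\varepsilon)}=\Phi(N^{(\varepsilon)})_{t}+o_{P}(1)$ uniformly on compacts; the continuous mapping theorem and Slutsky's lemma then give $X^{(\varepsilon)}\Rightarrow\Phi(\boldsymbol{\sigma}_{\rm int}W)$, and integration by parts gives $\Phi(\boldsymbol{\sigma}_{\rm int}W)_{t}=\boldsymbol{\sigma}_{\rm int}(W_{t}-t^{-1}\int_{0}^{t}W_{s}\mathrm{d}s)=\tfrac{\boldsymbol{\sigma}_{\rm int}}{t}\int_{0}^{t}u\,\mathrm{d}W_{u}$, which is the stated limit. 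I expect the main obstacle to be the behaviour at $t=0$: making the decomposition and the integral $N^{(\varepsilon)}$ rigorous there (which requires the uniform bound $\mathbb{E}[(sZ^{(\infty)}_{s})^{2}]=O(1)$, hence the strength of \eqref{Assump4}, to ensure $\langle N^{(\varepsilon)}\rangle$ is finite and convergent), and closing the Gr\"onwall estimate for the nonlinear error $\sqrt{\varepsilon}\int_{0}^{t}(X_{s}^{(\varepsilon)})^{2}\mathrm{d}s$ uniformly in $\varepsilon$ and down to the origin, which is precisely where the quantitative $\mathbb{L}_{2}$ control coming from Theorem \ref{PropC2} is needed.
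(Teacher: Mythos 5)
Your proposal is correct and takes essentially the same route as the paper: write the rescaled process as the solution of a singular linear equation driven by a weighted martingale, prove a functional martingale CLT for the driver, and transfer the limit through the deterministic resolvent $\Phi(n)_t = n_t - t^{-1}\int_0^t n_s\,{\rm d}s = t^{-1}\int_0^t s\,{\rm d}n_s$. The differences are organisational rather than substantive: the paper keeps $M^{\rm br}$ and $M^{\rm int}$ separate (showing the branching part is $O(t^p)$, hence asymptotically negligible) where you bundle them into a single martingale $M$; the paper verifies the jump condition of Ethier--Kurtz by first truncating jump sizes at level $k$ (the processes $D_{k,t}$, controlled via Lemma \ref{lemma15}) and then removing the truncation, where you verify a Lindeberg-type condition directly by splitting the $s$-integral and using $\sum_i i^2 b_i<\infty$; and the paper handles the singularity of $t\mapsto t^{-1}$ at the origin via the auxiliary processes $Y^{(\varepsilon)}_{k,\cdot}$ (replacing $t^{-1}$ by $k^{-1}$ on $[0,k]$) and then lets $k\downarrow 0$, where you invoke continuity of $\Phi$ at continuous paths vanishing at $0$ --- which is the same approximation, stated abstractly rather than carried out term by term. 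Your Gr\"onwall control of the quadratic error term $\sqrt{\varepsilon}\int_0^t (X_s^{(\varepsilon)})^2\,{\rm d}s$ uses exactly the $\mathbb{L}_2$ bound the paper records in Lemma \ref{lemma11}, so the ingredients match.
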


Theorem \ref{Main3} recovers the second-order asymptotic result of the number of blocks in Kingman's coalescent (i.e., when $d = \rho = 0$ and $b = 0$) about its speed of coming down from infinity (Theorem 1.1 in \cite{Vlada20152}). The process $(X_{t})_{t \geq 0}$ is similar to the process in Theorem 1.3 of \cite{Vlada20152} in the case of the number of blocks of a $\Lambda$-coalescent with a Kingman part.

\subsection{Examples from population genetics} \label{Examples}

Kingman's coalescent \cite{Kingman1982} is widely regarded as the simplest stochastic coalescence process. Despite its simplicity, it has profound mathematical properties that make it a cornerstone in population genetics. As noted in the introduction (Section \ref{Sec1}), the block counting process of Kingman's coalescent corresponds to a subcritical cooperative BPI process with parameters $c > 0$ and $d = \rho = b = 0$. In particular, it satisfies $\mathbf{m}_{\rm int}<0$, $\mathbf{m}_{\rm br}<\infty$,  \eqref{Assump3TWO} and \eqref{Assump4}, making the main results in Section \ref{MainResultSec} applicable.

The Ancestral Selection Graph (ASG), introduced in \cite{KRONE1997, Neuhauser1997}, extends Kingman's coalescent by incorporating the effects of natural selection and mutation. Mutation modifies the genetic type of an ancestral line, potentially leading to its loss when tracing lineages backward in time. Selection, on the other hand, provides an evolutionary advantage to fitter individuals. Unlike the tree-like structure of Kingman's coalescent, the ASG takes the form of a graph due to uncertainty introduced when tracing lineages backward without knowledge of individual types. Specifically, it is unclear whether an offspring results from standard reproduction or involves a fit individual. To capture this ambiguity, the ASG allows lineages to split into two potential paths. The block counting process of the ASG is a subcritical cooperative BPI process with parameters $c,d > 0$, $b = 0$, $\pi_{1}= \rho>0$ and $\pi_{i}=0$, for all $i \geq 2$.
 Here, $d$  accounts for lineage loss due to mutations, while 
$\rho$ captures lineage splitting caused by selection. Clearly, $\mathbf{m}_{\rm int}<0$, $\mathbf{m}_{\rm br}<\infty$, and \eqref{Assump3TWO} are satisfied, ensuring that the block counting process of the ASG comes down from infinity at the same speed as Kingman's coalescent (Theorem \ref{Main1}); specifically, $\mathbf{m}_{\rm int} = -c$. Furthermore, \eqref{Assump4} is satisfied, and Theorem \ref{Main3} applies with $\boldsymbol{\sigma}_{{\rm int}}^{2} = c$. These results were established for this specific model in \cite{Hanson2020} (unfortunately, unpublished).

The dynamics of the block counting process in the ASG align with the so-called $(1, \rho, c, d)$-braco-process \cite{griffiths1997}, which tracks the number of particles in a branching-coalescing system with the following rules: particles split into two at rate $\rho$, die at rate 
$d$, and any ordered pair of particles coalesces into one at rate 
$c$. Moreover, as highlighted in \cite{Hanson2020}, the block counting process of the ASG bears similarities to the Ancestral Recombination Graph (ARG) \cite{Athreya2005, Alkemper2007}, a BPI process characterised by parameters $c > 0$, $b = d = 0$, $\pi_{1}= \rho>0$ and $\pi_{i}=0$, for all $i \geq 2$.

The Ancestral Selection/Efficiency Graph (ASEG), introduced in \cite{GONZALEZ2020}, generalises the ASG by incorporating the concept of efficiency, an ecological factor related to resource consumption strategies. Efficient individuals require fewer resources to produce progeny, giving them an advantage when resources are scarce, while inefficient individuals require more. The vertex counting process of the ASEG, as studied in \cite{GONZALEZ2020}, corresponds to a BPI process with parameters  $c >0$, $b \in [0,c]$, $d = 0$, $\pi_{1}= \rho \geq 0$ and $\pi_{i}=0$, for all $i \geq 2$. Here, 
$b$ represents the efficiency parameter, 
$\rho$ reflects selection, and there are no mutations $d=0$. In \cite{GONZALEZ2020}, the authors adopt the parametrisation $c = 1/2$, $b = \kappa/2$, and $\rho = \alpha$, for $\kappa \in [0, 1]$ and $\alpha \geq 0$. When $b \in [0,c)$, the process is subcritical cooperative (i.e.\ $\mathbf{m}_{\rm int}<0$), satisfying $\mathbf{m}_{\rm br}<\infty$ and \eqref{Assump3TWO}. Thus, the vertex counting process of the ASEG comes down from infinity at the same speed (up to a constant) as Kingman's coalescent (Theorem \ref{Main1}), with $\mathbf{m}_{\rm int} = -c+b$. Additionally, \eqref{Assump4} is satisfied, and Theorem \ref{Main3} applies with $\boldsymbol{\sigma}_{{\rm int}}^{2} = c+b$.

\subsection{Comments about our results and organization of the rest of the paper}\label{comments}

The proof of Theorem \ref{Main1} builds upon Theorem 4.5 in \cite{Bansaye2019} (see Section \ref{ProofofThemMain} for further details). However, addressing these results requires overcoming several technical challenges. Specifically, precisely determining the speed at which the processes come down from infinity. A central aspect of our approach is the representation of BPI processes as solutions to stochastic differential equations (SDEs) with jumps (see Section \ref{IntegralR}), allowing the effective application of martingale methods and stochastic calculus.

A complementary approach on the phenomenon of coming down from infinity for BPI processes is provided in Theorem 1.14 in \cite{Berzunza2020}. This result offers a refined perspective using a moment duality approach. However, Theorem 1.14 in \cite{Berzunza2020} does not address the rate at which subcritical (or critical) cooperative BPI processes come down from infinity, leaving the question open within this framework. In the subcritical cooperative regime, the conditions in \cite{Berzunza2020} hold under the assumption that $\mathbf{m}_{\rm br} < \infty$, which implies that the process does not explode (see Example 2.1 in \cite{Berzunza2020}). Additionally, in this case, the integral $\mathcal{I}_{d,\rho}^{c,b}(\theta; 1)$ from \cite{Berzunza2020} is always finite, which indicates that the associated BPI process comes down from infinity (see the comment following Theorem 1.14 in \cite{Berzunza2020}).

In the critical cooperative regime, the situation is more delicate. There are situations where the test integral in Theorem 1.14 of \cite{Berzunza2020} is unable to determine whether the associated BPI process comes down from infinity. For example, in the case where $\mathbf{m}_{\rm int} =0$ and $\mathbf{m}_{\rm br} < \boldsymbol{\sigma}^{2}_{\rm int}/2 < \infty$, the integral $\mathcal{I}_{d,\rho}^{c,b}(\theta; 1)$ from \cite{Berzunza2020} is infinite (since we do not deal with the critical cooperative case here, details are left to the interested reader). Furthermore, the results from \cite{Bansaye2019}, specifically Proposition 4.4 and Theorem 4.5, are not straightforward to apply. Their use requires verifying non-trivial conditions that depend on the choice of a suitable function, denoted as $F$. While $F(z) \coloneqq \log(1+z)$ works for the subcritical cooperative regime (see Section \ref{ProofofThemMain}), it is unclear if a similar function exists or can be found for the critical cooperative regime. Thus, we leave the question of the speed at which critical cooperative BPI processes come down from infinity as an open problem for future work. 

Let us emphasise that the choice of $F(z) \coloneqq \log(1+z)$ in the subcritical cooperative regime, used to determine the speed of coming down from infinity, is not necessarily unique.  Other functions may yield the same conclusion. For instance, one could consider $F(z) \coloneqq (1+z)^{a}$; however, verifying the conditions required in \cite{Bansaye2019} appears to be more delicate in this case. Moreover, it remains unclear whether alternative choices of  functions
 could be exploited to weaken the moment assumptions in the subcritical cooperative regime. We leave this question as an open problem. For related examples where different choices of functions are employed to study coming down from infinity in other stochastic processes, see Section 4.2 of \cite{Bansaye2019}.

We now focus on  Theorems \ref{PropC2} and \ref{Main3}. As previously noted,  in  assumption \eqref{Assump4}, the condition $\vartheta>e^{288}$ can be relaxed to $\vartheta>1$. However, this adjustment comes at a cost: under our methodology, the  $\mathbb{L}_p$ convergence in Theorem \ref{PropC2} will only hold   for $p\in (0, \log(\vartheta)/144]$. Furthermore, if  $\vartheta\le e^{288}$, our approach is insufficient to establish the conclusions of Theorem \ref{Main3}.  Assumption \eqref{Assump4} is thus critical to the robustness of our result  in Theorem \ref{Main3}, and any relaxation of its conditions requires a thorough examination of its implications. Nonetheless, we conjecture that Theorems \ref{PropC2} and \ref{Main3} may remain valid under weaker assumptions. This leaves open the possibility of further investigation.  In particular, we suspect that imposing moment conditions on the branching parameters may not be necessary, as the dominant contributions arise from interactions whose rate of occurrence is quadratic.

The proof of Theorem \ref{PropC2} (see Section \ref{ProofofPropC2}) relies on Lemma \ref{lemma3}, which is a key improvement to the estimate provided in Lemma 4.6 of \cite{Bansaye2019} (see also Theorem 3.2 of \cite{Bansaye2019}). This refinement enables the application of a supermartingale inequality (see Lemma \ref{lemmaA1} in Appendix \ref{Apendice}), which resembles the approach used in the proof of Theorem 2 in \cite{Bere2010} where the $\mathbb{L}_p$ convergence for the speed of coming down from infinity for $\Lambda$ coalescents was studied. (This approach may be of independent interest, as it has the potential to be applied to a broader setting, such as that considered in \cite{Bansaye2019}.)

A crucial component of the proof of Theorem \ref{Main3} is the result in Theorem \ref{PropC2} for $p=2$ (see Section \ref{ProofofMain3} for further details). The proof of Theorem \ref{Main3} builds upon the robust framework developed in \cite{Vlada20152, Vlada2015}, where the fluctuations for the speed of coming down from infinity for $\Lambda$ coalescents are studied, with key modifications to account for the branching and cooperative dynamics of the BPI process. 

\section{Proofs of the main results}

Before we proceed with the proofs of our main results, we fix some notation.  For two real-valued functions $f, g:[0, \infty) \rightarrow \mathbb{R}$, we write $f(x) \sim g(x)$, as $x \rightarrow y$, if
$\lim_{x \rightarrow y} f(x)/g(x) = 1$, where $y \in [0,\infty]$.
We denote by $\overline{B}(x, \varepsilon) \coloneqq \{y \in \mathbb{R}_{+}: |x-y| \leq \varepsilon \}$ the Euclidean closed ball centred at $x \in \mathbb{R}_{+}$ with radius $\varepsilon >0$. More generally, we denote by $\overline{B}_{d}(x, \varepsilon) \coloneqq \{y \in \mathbb{R}_{+}: d(x,y) \leq \varepsilon \}$ the closed ball centred at $x \in \mathbb{R}_{+}$ with radius $\varepsilon >0$ associated with the application $d: \mathbb{R}_{+} \times  \mathbb{R}_{+} \rightarrow \mathbb{R}_{+}$. Let $\Delta X_{s} = X_{s} - X_{s-}$ denote the jump at time $s \geq 0$ of a c\`adl\`ag process $(X_{t})_{t \geq 0}$. 

\subsection{Integral representation of BPI processes}  \label{IntegralR}

In this section, we provide a representation of  a BPI process as the unique strong solution of a stochastic differential equation. Let $(\Omega, \mathcal{F}, (\mathcal{F}_{t})_{t \geq 0}, \mathbb{P})$ be a filtered probability space satisfying the usual conditions. Let $\nu_{\rm br}$ and $\nu_{\rm int}$ be two $\sigma$-finite measures on $\mathbb{N}_{-1}:=\{-1\}\cup \mathbb{N}_0$ given by
\begin{align} \label{Measuresbrint}
\nu_{\rm br}({\rm d} x) = d \delta_{-1}({\rm d} x) + \sum_{i = 1}^{\infty} \pi_{i} \delta_{i}({\rm d} x) \qquad \text{and} \qquad \nu_{\rm int}({\rm d} x) = 2c \delta_{-1}({\rm d} x) + \sum_{i = 1}^{\infty} 2b_{i} \delta_{i}({\rm d} x),
\end{align}
where $\delta_u$ denotes the Dirac measure at $u$.

Set $\Delta \coloneqq  \{ (i,j) \in \mathbb{N} \times \mathbb{N}: 1 \leq i < j \}$. Let $N_{\rm br}$ and $N_{\rm int}$ be two independent $(\mathcal{F}_{t})_{t \geq 0}$-Poisson point measures on $\mathbb{R}_{+} \times \mathbb{N} \times \mathbb{N}_{-1}$ and $\mathbb{R}_{+} \times \Delta \times \mathbb{N}_{-1}$ with intensity measures ${\rm d} s \otimes \sum_{i \geq 1}\delta_{i}({\rm d} y) \otimes \nu_{\rm br}({\rm d} x)$ and ${\rm d} s  \otimes \sum_{(i,j) \in \Delta}\delta_{(i,j)}({\rm d} y) \otimes \nu_{\rm int}({\rm d} x)$, respectively; where ${\rm d} s$ denotes the Lebesgue measure on $\mathbb{R}_{+}$. For $k \geq 2$ (an integer), let us denote $\Delta_{k} \coloneqq \{ (i,j) \in \Delta: 1 \leq i <j\leq k\}$ and set $\Delta_{1} = \Delta_{0} = \emptyset$. Observe that $\nu_{\rm br}$ and $\nu_{\rm int}$ satisfy 
\begin{align} \label{LevyM1}
\int_{\mathbb{N}_{-1}} (1 \wedge x^{2}) \nu_{\rm br}({\rm d} x) < \infty \qquad \text{and} \qquad \int_{\mathbb{N}_{-1}} (1 \wedge x^{2}) \nu_{\rm int}({\rm d} x) < \infty,
\end{align}
\noindent respectively (i.e.\ $\nu_{\rm br}$ and $\nu_{\rm int}$ are L\'evy measures).

Set $\mathcal{X} \coloneqq \{1,2\} \times (\mathbb{N} \cup \Delta ) \times \mathbb{N}_{-1}$, $\mathbb{N}_{0, \infty}:= \mathbb{N}_{0}\cup\{\infty\}$ and let
\begin{align} \label{eq2I}
N({\rm d} s, {\rm d} u, {\rm d} y, {\rm d} x) = \delta_{1}({\rm d} u) N_{\rm br}({\rm d} s,  {\rm d} y, {\rm d} x) + \delta_{2}({\rm d} u) N_{\rm int}({\rm d} s,  {\rm d} y, {\rm d} x)
\end{align}
\noindent be a Poison point measure on $\mathbb{R}_{+} \times \mathcal{X}$ with intensity measure
\begin{align} \label{InteMeasu}
{\rm d} s \otimes q({\rm d} u, {\rm d} y, {\rm d} x) = {\rm d} s \otimes \Big( \delta_{1}({\rm d} u) \otimes \sum_{i \geq 1}\delta_{i}({\rm d} y) \otimes \nu_{\rm br}({\rm d} x) +  \delta_{2}({\rm d} u) \otimes \sum_{(i,j) \in \Delta}\delta_{(i,j)}({\rm d} y) \otimes \nu_{\rm int}({\rm d} x) \Big).
\end{align}
\noindent Let $\tilde{N}$ denote the compensated Poisson point measure of $N$.

\begin{proposition} \label{Pro1}
For $z \in \mathbb{N}_{0}$, the stochastic differential equation
\begin{align} \label{SDE}
Z_{t} = z + \int_{0}^{t} \int_{\mathcal{X}} H(Z_{s-}, u, y, x) N({\rm d} s, {\rm d} u, {\rm d} y, {\rm d} x), \quad t \geq 0,
\end{align}
\noindent where 
\begin{align} \label{funcH}
H(z, u, y, x) = \mathbf{1}_{\{u=1\}} x \mathbf{1}_{\{y \in \mathbb{N}, 0 < y \leq z \}} + \mathbf{1}_{\{u=2\}} x \mathbf{1}_{\{y \in \Delta_{z}\}}, \quad  (z, u, y, x) \in \mathbb{N}_{0} \times \mathcal{X}, 
\end{align}

\noindent has a unique strong solution. The solution $(Z_{t})_{t \geq 0}$ is a $\mathbb{N}_{0, \infty}$-valued $(\mathcal{F}_{t})_{t \geq 0}$-adapted process with c\`adl\`ag paths such that $\mathbb{P}(Z_{0} = z)=1$ and it satisfies \eqref{SDE} up to time $\tau_{n} \coloneqq \inf\{t \geq 0: Z_{t} \geq n\}$ for all $n \in \mathbb{N}$, and $Z_{t} = \infty$ for all $t \geq \tau \coloneqq \lim_{n \rightarrow \infty} \tau_{n}$. Moreover, the solution is a BPI-process issued from $z$.
\end{proposition}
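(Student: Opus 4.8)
\textbf{Proof plan for Proposition \ref{Pro1}.}

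The plan is to follow the standard programme for constructing Markov chains as strong solutions of Poisson-driven SDEs, localizing at the explosion time. First I would observe that the map $H$ in \eqref{funcH} is piecewise constant in $z$: for fixed $z$, the integrand $H(z,\cdot,\cdot,\cdot)$ is supported on the finite set $\{1\}\times\{1,\dots,z\}\times\mathbb{N}_{-1}$ together with $\{2\}\times\Delta_z\times\mathbb{N}_{-1}$, and on each such fibre the driving measure $N$ has finite total intensity in the $(u,y)$-variables but only $\sigma$-finite intensity in $x$ (this is where \eqref{LevyM1} enters: $\nu_{\rm br}$ and $\nu_{\rm int}$ are L\'evy measures, so the jump integral is well defined as a sum of finitely many independent compound-Poisson-type pieces, each with integrable — in fact finite-mean, since $\mathbf{m}_{\rm br},\mathbf{m}_{\rm int}$ need not be assumed finite here, so one truncates large jumps and treats them separately, or simply notes each atom $\pi_i,b_i$ contributes a finite-rate jump). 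Concretely, for a fixed level $z<\infty$ the process waits an exponential time whose rate is exactly the $i$-th diagonal entry $-q_{z,z}=z(d+\rho+(b+c)(z-1))$ read off from \eqref{Qmatrix}, then jumps; hence on $\{t<\tau_n\}$ the equation \eqref{SDE} is a genuinely finite-activity SDE.

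Next I would construct the solution pathwise by induction on successive jump times. Set $T_0=0$, $Z_0=z$; given $Z_{T_k}=z_k<\infty$, let $T_{k+1}$ be the first atom of $N$ after $T_k$ lying in the (finite-in-$(u,y)$) set where $H(z_k,\cdot)$ is nonzero, and set $Z$ constant on $[T_k,T_{k+1})$ and $Z_{T_{k+1}}=z_k+H(z_k,u,y,x)$ for the mark $(u,y,x)$ of that atom. This defines a c\`adl\`ag $\mathbb{N}_0$-valued process up to $\tau:=\lim_k T_k$ (possibly $\infty$), and one sets $Z_t=\infty$ for $t\ge\tau$. By construction $Z$ solves \eqref{SDE} up to $\tau_n$ for every $n$, and $\tau_n\uparrow\tau$. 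Strong uniqueness is immediate from this construction: any solution must agree with $Z$ on $[0,T_1)$, must have its first jump at $T_1$ with the prescribed mark (since the jumps of a solution are exactly the atoms of $N$ in the relevant region, by definition of a solution driven by $N$), hence by induction agrees with $Z$ on each $[T_k,T_{k+1})$, so on $[0,\tau)$, and the convention on $[\tau,\infty)$ forces equality everywhere. Adaptedness to $(\mathcal{F}_t)$ follows since each $T_k$ is an $(\mathcal{F}_t)$-stopping time and $Z_{T_k}$ is $\mathcal{F}_{T_k}$-measurable.

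Finally, I would verify that $(Z_t)$ is indeed a BPI process with generator \eqref{Qmatrix}. This is a direct computation: apply the compensation formula (Itô's formula for Poisson integrals) to $f(Z_t)$ for bounded $f:\mathbb{N}_0\to\mathbb{R}$, using the intensity \eqref{InteMeasu}, to get that $f(Z_{t\wedge\tau_n})-f(z)-\int_0^{t\wedge\tau_n}\mathcal{L}f(Z_s)\,\mathrm{d}s$ is a martingale, where
\begin{align*}
\mathcal{L}f(z) = z\!\int_{\mathbb{N}_{-1}}\!\!\big(f(z+x)-f(z)\big)\nu_{\rm br}(\mathrm{d}x) + \binom{z}{2}\!\int_{\mathbb{N}_{-1}}\!\!\big(f(z+x)-f(z)\big)\nu_{\rm int}(\mathrm{d}x),
\end{align*}
and checking, using $|\Delta_z|=\binom{z}{2}$ and the definitions \eqref{Measuresbrint} of $\nu_{\rm br},\nu_{\rm int}$, that $\mathcal{L}$ coincides with the action of the $Q$-matrix in \eqref{Qmatrix} (the factor $2c$, $2b_i$ in $\nu_{\rm int}$ compensates $\binom{z}{2}=z(z-1)/2$ to produce $cz(z-1)$, $b_i z(z-1)$). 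Together with the localization and the convention $Z\equiv\infty$ past $\tau$, this identifies the law of $Z$ with that of the BPI process issued from $z$ by the standard martingale-problem uniqueness for minimal Markov chains.

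The main obstacle is somewhat routine here because the activity is finite at each level: the only genuine care needed is the bookkeeping that the $\sigma$-finite (not finite) $x$-marginals of $\nu_{\rm br},\nu_{\rm int}$ still yield well-defined finite-rate jumps per level (guaranteed by $b=\sum b_i<\infty$, $\rho=\sum\pi_i<\infty$), and the treatment of the explosion time $\tau$ — making sure the pathwise construction and the uniqueness argument are consistent with the stated convention $Z_t=\infty$ for $t\ge\tau$, and that $\tau=\lim\tau_n$ genuinely holds for the constructed process.
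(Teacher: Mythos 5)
Your proof is correct, but it takes a genuinely different route from the paper's. The paper disposes of the proposition in two lines by invoking Proposition 1 of Palau--Pardo \cite{Palau2018}, a general strong existence-and-uniqueness theorem for Poisson-driven SDEs: one sets the drift, Gaussian and compensated-Poisson components to zero, takes $M=N$, $U=\mathcal{X}$, $g=H$, and checks the hypotheses via the L\'evy-measure property \eqref{LevyM1}. You instead carry out the classical pathwise (Doob--Gillespie) construction of the minimal chain: at each level $z<\infty$ the set of $(u,y)$-marks where $H(z,\cdot)$ is non-zero is finite, and since $\nu_{\rm br}(\mathbb{N}_{-1})=d+\rho<\infty$ and $\nu_{\rm int}(\mathbb{N}_{-1})=2c+2b<\infty$ are in fact \emph{finite} (not merely $\sigma$-finite --- this is guaranteed by the standing hypotheses $b,\rho<\infty$ of the BPI definition, so your caution about truncating large jumps is unnecessary), the total jump rate at level $z$ is exactly $-q_{z,z}=z(d+\rho+(b+c)(z-1))$, finite; one builds the solution atom by atom, obtains pathwise uniqueness by induction on the jump chain, sets $Z\equiv\infty$ past the explosion time $\tau=\lim_n\tau_n$, and identifies the generator with \eqref{Qmatrix} via the compensation formula, using $|\Delta_z|=\binom{z}{2}$ to turn the factor $2c,2b_i$ in $\nu_{\rm int}$ into $cz(z-1),b_iz(z-1)$. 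Your approach is more self-contained and makes the jump-chain structure and the role of the explosion time explicit, at the cost of length; the paper's citation is shorter but relies on the reader trusting a black-box theorem whose hypotheses are only briefly indicated. Two small points: you write ``the $i$-th diagonal entry'' where you mean the $z$-th, and the remark about $\mathbf{m}_{\rm br},\mathbf{m}_{\rm int}$ not being assumed finite here is correct but irrelevant to well-posedness of the construction (what can fail is non-explosion, not existence), which you do handle correctly through the $\tau_n$.
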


\begin{proof}
The result follows from Proposition 1 in \cite{Palau2018} by considering the functions $b \equiv 0$ (no drift term), $\sigma \equiv 0$ (no Gaussian term), $h \equiv 0$ (no compensated Poisson measure term), $M=N$, $U = \mathcal{X}$ and $g=H$. The conditions of Proposition 1 in \cite{Palau2018} are verified using \eqref{LevyM1}.
\end{proof}

We observe that the proof of Proposition \ref{Pro1} does not require any assumptions on the parameters of the BPI process. Henceforth, let $(Z_{t}^{(z)})_{t \geq 0}$ denote the BPI process issued from $z \in \mathbb{N}_{0}$, that is, the unique strong solution of the SDE \eqref{SDE}. Note that the SDE representation \eqref{SDE} of the BPI process bears some similarity to the construction of the standard Kingman coalescent, as explained in Section 2 of \cite{Vlada20152}. Recall that the block counting process of the Kingman coalescent is a special case of a BPI process, when $c>0$, $d=\rho=b=0$.

\begin{proposition} \label{Prop6}
Suppose that $\sum_{i \geq 1} i\pi_{i} < \infty$ and $\sum_{i \geq 1} i b_{i} < \infty$. Then, for $z_{1}, z_{2} \in \mathbb{N}$ such that $z_{1} \leq z_{2}$, we have that $\mathbb{P}(Z^{(z_{1})}_{t} \leq Z^{(z_{2})}_{t} \, \, \text{for all} \, \, t \geq 0) =1$. 
\end{proposition}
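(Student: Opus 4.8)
The plan is to establish the monotone coupling by constructing both processes $Z^{(z_1)}$ and $Z^{(z_2)}$ on the same probability space using the \emph{same} Poisson point measure $N$ from Proposition \ref{Pro1}, and then showing that the ordering $Z^{(z_1)}_t \le Z^{(z_2)}_t$ is preserved by every type of jump. Since both are the unique strong solutions of \eqref{SDE} driven by the common $N$ with respective initial values $z_1 \le z_2$, they are automatically coupled; the content of the proof is that the gap $Z^{(z_2)}_t - Z^{(z_1)}_t$ never becomes negative. I would first reduce to the event before explosion: work up to the stopping time $\tau_n^{(2)} = \inf\{t : Z^{(z_2)}_t \ge n\}$, on which both processes are bounded by $n$ and hence, by $\sum_i i\pi_i < \infty$ and $\sum_i i b_i < \infty$, experience only finitely many jumps in any bounded time interval (the total jump rate is bounded by $n(\rho + b n) < \infty$ plus finitely many terms); then let $n \to \infty$. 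Note $\tau^{(1)}_n \ge \tau^{(2)}_n$ once the ordering is known, so no issue arises from $Z^{(z_1)}$ escaping first.

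The key step is a jump-by-jump (equivalently, induction on successive jump times of the superposed point process) argument showing that $Z^{(z_1)}_{s-} \le Z^{(z_2)}_{s-}$ implies $Z^{(z_1)}_{s} \le Z^{(z_2)}_{s}$. There are two cases according to the mark $u$. For a branching atom ($u=1$) at location $(s, y, x)$ with $y \in \mathbb{N}$: if the jump acts on $Z^{(z_1)}$, i.e. $0 < y \le Z^{(z_1)}_{s-}$, then also $0 < y \le Z^{(z_2)}_{s-}$, so both processes receive the same increment $x \in \mathbb{N}_{-1}$; if it acts only on $Z^{(z_2)}$ (i.e. $Z^{(z_1)}_{s-} < y \le Z^{(z_2)}_{s-}$), then $Z^{(z_1)}$ is unchanged while $Z^{(z_2)}$ changes by $x$, and one must check the order is still preserved. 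The only dangerous subcase is $x = -1$ (a death), where $Z^{(z_2)}$ decreases by one but $Z^{(z_1)}$ does not — however this requires $y = Z^{(z_2)}_{s-}$ with $Z^{(z_1)}_{s-} < Z^{(z_2)}_{s-}$, whence $Z^{(z_1)}_{s-} \le Z^{(z_2)}_{s-} - 1 = Z^{(z_2)}_{s}$, so the ordering survives. For an interaction atom ($u=2$) at $(s,(i,j),x)$ with $(i,j)\in\Delta$: the jump acts on $Z^{(z_\ell)}$ iff $(i,j) \in \Delta_{Z^{(z_\ell)}_{s-}}$, i.e. iff $j \le Z^{(z_\ell)}_{s-}$; the same reasoning applies, the critical case being $x=-1$ and $j = Z^{(z_2)}_{s-} > Z^{(z_1)}_{s-}$, which again gives $Z^{(z_1)}_{s} = Z^{(z_1)}_{s-} \le Z^{(z_2)}_{s-}-1 = Z^{(z_2)}_{s}$. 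In all remaining cases either both processes take the same step or $Z^{(z_2)}$ strictly increases relative to $Z^{(z_1)}$, so $\le$ is maintained.

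The main obstacle — and the reason $\sum_i i\pi_i < \infty$ and $\sum_i i b_i < \infty$ are needed rather than the weaker conditions of Proposition \ref{Pro1} — is to make the ``jump by jump'' induction rigorous: one needs that on $[0, \tau_n^{(2)} \wedge t]$ the Poisson point measure $N$ has only finitely many atoms that are \emph{relevant} (i.e. can affect at least one of the two bounded processes), so that there is a well-defined sequence of successive relevant jump times $0 = T_0 < T_1 < T_2 < \cdots$ accumulating only at (or beyond) $\tau_n^{(2)}$, along which the induction proceeds. Relevant branching atoms on $[0,t]$ with $y \le n$ arrive at rate at most $n\,\nu_{\rm br}(\mathbb{N}_{-1}) = n(d+\rho) < \infty$, and relevant interaction atoms with $j \le n$ at rate at most $|\Delta_n|\,\nu_{\rm int}(\mathbb{N}_{-1}) = \binom{n}{2}(2c + 2b) < \infty$; these finiteness facts use only $b,\rho,c,d < \infty$, but to then pass $n \to \infty$ and conclude $\mathbb{P}(Z^{(z_1)}_t \le Z^{(z_2)}_t \text{ for all } t \ge 0) = 1$ one wants $\tau^{(2)} = \lim_n \tau_n^{(2)} = \infty$ a.s., i.e. non-explosion of the larger process, which is exactly where the first-moment assumptions $\sum_i i \pi_i, \sum_i i b_i < \infty$ enter (via Example 2.1 in \cite{Berzunza2020} / Theorem 1 in \cite{Pa2021}). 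Once non-explosion is in hand, letting $n \to \infty$ in $\{Z^{(z_1)}_s \le Z^{(z_2)}_s,\ s \le \tau_n^{(2)} \wedge t\}$ and then $t \to \infty$ gives the claim.
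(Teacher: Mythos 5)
Your proof is correct and takes a genuinely different route from the paper's. You build both processes from the \emph{same} driving Poisson point measure $N$ (which Proposition~\ref{Pro1} makes legitimate) and run a direct pathwise, jump-by-jump induction: localizing at a stopping time keeps the number of relevant atoms a.s.\ finite, and the case analysis amounts to observing that $z \mapsto z + H(z,u,y,x)$ is non-decreasing in $z$, so a single atom can never reverse the order $Z^{(z_1)}_{s-} \le Z^{(z_2)}_{s-}$. The paper instead runs a Tanaka/Yamada--Watanabe type argument (adapted from Fu--Li): it sets $Y_t = Z^{(z_1)}_t - Z^{(z_2)}_t$, smoothly approximates $y \mapsto y^+$ by $\phi_m$, applies It\^o's formula, uses the same monotonicity of $z + H(z,\cdot)$ to annihilate the integrand on $\{Y_{s-} \le 0\}$, and closes a Gronwall loop to obtain $\mathbb{E}[Y_{t \wedge \tau_n}^+] = 0$; both then invoke non-explosion to let $n \to \infty$. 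Your pathwise approach is more elementary and transparent for this $\mathbb{N}_0$-valued pure-jump chain, at the cost of the explicit bookkeeping of relevant atoms; the paper's moment/Gronwall argument avoids any enumeration of jump times and is the version that survives the passage to continuous-state SDEs with jumps. Two small points of presentation rather than substance: (i) the ``dangerous'' event where a $-1$ jump hits only $Z^{(z_2)}$ does not require $y = Z^{(z_2)}_{s-}$ (resp.\ $j = Z^{(z_2)}_{s-}$), only $Z^{(z_1)}_{s-} < y \le Z^{(z_2)}_{s-}$; your conclusion still follows since $Z^{(z_1)}_{s-} \le y - 1 \le Z^{(z_2)}_{s-} - 1 = Z^{(z_2)}_s$; (ii) to avoid the mild circularity in stopping at $\tau_n^{(2)}$ alone, it is cleaner to localize at the symmetric time $\tau_n = \inf\{t : Z^{(z_1)}_t \ge n \text{ or } Z^{(z_2)}_t \ge n\}$ as the paper does, running the induction there and deducing $\tau_n^{(1)} \ge \tau_n^{(2)}$ afterwards.
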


\begin{proof}
The proof is a simple adaptation of the argument used in the Proof of Theorem 5.5 in \cite{Zenghu2010}. Let $Y_{t} = Z^{(z_{1})}_{t} - Z^{(z_{2})}_{t}$, for $t \geq 0$. By Proposition \ref{Pro1},
\begin{align} \label{Inteq1}
Y_{t}= z_{1} -z_{2} + \int_{0}^{t} \int_{\mathcal{X}} ( H(Z^{(z_{1})}_{s-}, u, y, x) - H(Z^{(z_{2})}_{s-}, u, y, x)) N({\rm d} s, {\rm d} u, {\rm d} y, {\rm d} x).
\end{align}

Let $(a_{m})_{m \geq 0}$ be a decreasing sequence of positive real numbers such that $a_{0} =1$, $a_{m} >0$ for $m \in \mathbb{N}$, and $\lim_{m \rightarrow \infty} a_{m} = 0$. For each $m \in \mathbb{N}$, let $g_{m}: \mathbb{R} \rightarrow \mathbb{R}_{+}$ be a non-negative continuous function with support on $(a_{m}, a_{m-1})$ and satisfying $\int_{a_{m}}^{a_{m-1}} g_{m}(z) {\rm d} z= 1$. For each $m \in \mathbb{N}$, we define the non-negative and twice continuously differentiable function
\begin{align}
\phi_{m}(z) = \int_{0}^{z} \int_{0}^{y} g_{m}(x) {\rm d} x {\rm d} y, \qquad z \in \mathbb{R}. 
\end{align}
\noindent Clearly, $\phi_{m}(z) \rightarrow z^{+} = 0 \vee z$, for $z \in \mathbb{R}$, non-decreasingly, as $m \rightarrow \infty$. Since $\phi_{m}(z) = 0$, for $z \leq 0$, and $z \mapsto z + H(z, u,y,x)$ is non-decreasing, for $(u,y,x) \in \mathcal{X}$, if $Y_{s-} \leq 0$, we have 
\begin{align}
Y_{s-} + H(Z^{(z_{1})}_{s-}, u, y, x) - H(Z^{(z_{2})}_{s-}, u, y, x) \leq 0
\end{align}
 and hence
\begin{align} \label{Inteq2}
G_{s}(\phi_{m},Y) \coloneqq \phi_{m}(Y_{s-}+  H(Z^{(z_{1})}_{s-}, u, y, x) - H(Z^{(z_{2})}_{s-}, u, y, x)) - \phi_{m}(Y_{s-}) =0, \quad s \geq 0.
\end{align}
\noindent Note that $Y_{0} = z_{1} - z_{2} \leq 0$ and $\phi_{m}(Y_{0}) = 0$.  For $n \in \mathbb{N}$, define $\tau_{n} = \inf\{t \geq 0: Z^{(z_{1})}_{t} \geq n \, \, \text{or} \, \,  Z^{(z_{2})}_{t} \geq n \}$. Then, by \eqref{Inteq1}, It\^o's formula (Theorem 5.1  in Chapter 2 of \cite{Ikeda1981}) and \eqref{Inteq2},
\begin{align} \label{Inteq3}
\phi_{m}(Y_{t \wedge \tau_{n}}) & = \int_{0}^{t \wedge \tau_{n}} \int_{\mathcal{X}} G_{s}(\phi_{m}, Y) N({\rm d} s, {\rm d} u, {\rm d} y, {\rm d} x) \nonumber \\
&  = \int_{0}^{t \wedge \tau_{n}} \int_{\mathcal{X}} G_{s}(\phi_{m}, Y) \mathbf{1}_{\{Y_{s-} >0 \}} {\rm d} s q({\rm d} u, {\rm d} y, {\rm d} x)   + M_{t}^{(n,m)},
\end{align}
\noindent where $(M_{t}^{(n,m)})_{t \geq 0}$ is martingale. Note that $|\phi_{m}^{\prime}(z)| \leq 1$, for $z \in \mathbb{R}$, which implies that  $\phi_{m}$ is (globally) Lipschitz. Since we have assumed that $\sum_{i \geq 1} i\pi_{i} < \infty$ and $\sum_{i \geq 1} i b_{i} < \infty$, it follows from \eqref{InteMeasu} and \eqref{funcH}, for any $s \leq \tau_{n}$,
\begin{align} \label{Inteq4}
& \int_{\mathcal{X}} G_{s}(\phi_{m}, Y)  \mathbf{1}_{\{Y_{s-} >0 \}}  q({\rm d} u, {\rm d} y, {\rm d} x) \leq \Big( d + (2n-1)c + \sum_{i \geq 1}i (\pi_{i}+(2n-1)b_{i}) \Big) Y_{s-}^{+}.
\end{align}
\noindent Then, from \eqref{Inteq3}, \eqref{Inteq4} and the dominated convergence theorem (letting $m \rightarrow \infty)$, we deduce that
\begin{align}
\mathbb{E}[Y_{t \wedge \tau_{n}}^{+}] \leq  \Big( d + (2n-1)c + \sum_{i \geq 1}i (\pi_{i}+(2n-1)b_{i}) \Big)  \int_{0}^{t} \mathbb{E}[Y_{s \wedge \tau_{n}}^{+} ] {\rm d} s. 
\end{align}
\noindent Then, $\mathbb{E}[Y_{t \wedge \tau_{n}}^{+}] = 0$, for all $t \geq 0$. On the other hand, since $(Z^{(z_{1})}_{t})_{t \geq 0}$ and $(Z^{(z_{2})}_{t})_{t \geq 0}$ do not explode in finite time and have c\`adl\`ag paths, we deduce that $\tau_{n} \rightarrow \infty$, as $n \rightarrow \infty$. Therefore, our claim follows. 
\end{proof}

\begin{lemma} \label{lemma4}
Suppose that $\sum_{i \geq 1} i\pi_{i} < \infty$ and $\sum_{i \geq 1} i b_{i} < \infty$.  Then, the BPI process is stochastically monotone, that is, for all $z_{1}, z_{2} \in \mathbb{N}_{0}$ such that $z_{1} \leq z_{2}$, $\mathbb{P}(Z_{t}^{(z_{1})} \geq x) \leq \mathbb{P}(Z_{t}^{(z_{2})} \geq x)$, for all $t \geq 0$ and $x \in \mathbb{R}$.
\end{lemma}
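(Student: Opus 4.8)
The plan is to derive Lemma \ref{lemma4} directly from the pathwise monotonicity established in Proposition \ref{Prop6}. The key observation is that stochastic monotonicity is an immediate consequence of a coupling under which one process dominates the other almost surely at every time. Since the hypotheses $\sum_{i\geq 1} i\pi_i < \infty$ and $\sum_{i\geq 1} i b_i < \infty$ are exactly those of Proposition \ref{Prop6}, we may take the two BPI processes $(Z_t^{(z_1)})_{t\geq 0}$ and $(Z_t^{(z_2)})_{t\geq 0}$ to be driven by the \emph{same} Poisson point measure $N$ in the SDE \eqref{SDE}, which is the coupling implicitly used in the proof of Proposition \ref{Prop6}.

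First I would handle the case $z_1, z_2 \in \mathbb{N}$ with $z_1 \leq z_2$: by Proposition \ref{Prop6}, on this common probability space we have $\mathbb{P}(Z_t^{(z_1)} \leq Z_t^{(z_2)} \text{ for all } t \geq 0) = 1$. Hence, for any fixed $t \geq 0$ and $x \in \mathbb{R}$, the event $\{Z_t^{(z_1)} \geq x\}$ is contained (up to a null set) in $\{Z_t^{(z_2)} \geq x\}$, so $\mathbb{P}(Z_t^{(z_1)} \geq x) \leq \mathbb{P}(Z_t^{(z_2)} \geq x)$. Since the left-hand side and right-hand side depend only on the marginal laws of $Z_t^{(z_1)}$ and $Z_t^{(z_2)}$ respectively, this inequality holds regardless of the coupling, giving the stated conclusion for $z_1, z_2 \in \mathbb{N}$.

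It then remains to cover the cases involving $0$. If $z_1 = 0$, then the BPI process started from $0$ is absorbed at $0$ (the state $0$ is absorbing, since $q_{0,j}=0$ for all $j$ by \eqref{Qmatrix}, equivalently $H(0, u, y, x) = 0$ for all $(u,y,x)$ because $\{y : 0 < y \leq 0\} = \emptyset$ and $\Delta_0 = \emptyset$). Thus $Z_t^{(0)} \equiv 0$ almost surely, and $\mathbb{P}(Z_t^{(0)} \geq x) = \mathbf{1}_{\{x \leq 0\}} \leq \mathbb{P}(Z_t^{(z_2)} \geq x)$ trivially for any $z_2 \in \mathbb{N}_0$ and any $x \in \mathbb{R}$, since $Z_t^{(z_2)} \geq 0$ always. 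The case $z_1 = z_2$ is immediate. This exhausts all pairs $z_1 \leq z_2$ in $\mathbb{N}_0$.

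I do not anticipate a genuine obstacle here: the lemma is essentially a repackaging of Proposition \ref{Prop6}. The only minor point requiring care is the boundary case $z_1 = 0$, where Proposition \ref{Prop6} as stated applies only to $z_1, z_2 \in \mathbb{N}$, so one must separately note that the process from $0$ stays at $0$; this follows directly from the form of $H$ in \eqref{funcH}. One could alternatively absorb this into a one-line remark that the coupling of Proposition \ref{Prop6} extends verbatim to $z_1 = 0$ since $H(0,\cdot,\cdot,\cdot) \equiv 0$ keeps $Z^{(0)} \leq Z^{(z_2)}$ pathwise.
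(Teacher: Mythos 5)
Your proof is correct and follows the same route as the paper, which simply cites Proposition \ref{Prop6}. You are slightly more careful than the paper in explicitly treating the boundary case $z_1=0$ (where Proposition \ref{Prop6} as stated does not apply) and in spelling out that coupling implies stochastic monotonicity of the marginals.
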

\begin{proof}
It follows from Proposition \ref{Prop6}.
\end{proof}

\subsection{Proof of Theorem \ref{Main1}} \label{ProofofThemMain}

In this section we prove Theorem \ref{Main1} using  the framework developed in Section 4 of \cite{Bansaye2019} (with $F(z) \coloneqq \log(1+z)$, for $z \in (-1, \infty)$). Recall that $\mathcal{X} \coloneqq \{1,2\} \times (\mathbb{N} \cup \Delta) \times \mathbb{N}_{-1}$. The next lemma verifies part of Assumptions 4.1 and 4.3 in \cite{Bansaye2019}. We have removed the subscript $F$ from our notation and will use $\varphi$ to denote the function $b_{F}$ in \cite{Bansaye2019}.

\begin{lemma} \label{lemma1}
Suppose that $\mathbf{m}_{\rm int}<0$ and $\mathbf{m}_{\rm br}<\infty$. We have the following:
\begin{enumerate}[label=(\roman*)]
\item For any $z \in \mathbb{N}_{0}$, $\displaystyle \int_{\mathcal{X}} \Big|\log \Big(1+\frac{H(z, u, y, x)}{z+1} \Big) \Big| q({\rm d} u, {\rm d} y, {\rm d} x) < \infty$. \label{Prop1}

\item Define the function $\displaystyle h(z) \coloneqq \int_{\mathcal{X}} \log \Big(1+\frac{H(z, u, y, x)}{z+1} \Big) q({\rm d} u, {\rm d} y, {\rm d} x)$, for $z \in \mathbb{N}_{0}$. Then, $h$ can be extended to $[0, \infty)$ such that is locally bounded on $[0, \infty)$ and locally Lipschitz on $(0, \infty)$.   \label{Prop2}

\item There exists $z_0 >0$ such that the function $\varphi(z) \coloneqq (z+1) h(z)$ is strictly negative on $(z_0, \infty)$.  \label{Prop3}

\item Define the function $\displaystyle \psi(z) \coloneqq  e^{-z}\varphi(e^{z}-1)$, for $z \in [0,\infty)$. Then, $\psi$ is $(0, \kappa)$ nonexpansive on $(1,\infty)$ for some $\kappa \geq 0$, i.e. for all $z_{1} > z_{2} > 1$, $\psi(z_{1}) \leq \psi(z_{2}) + \kappa$.  \label{Prop4}
\end{enumerate}
\end{lemma}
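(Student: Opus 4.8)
The plan is to evaluate the integral defining $h(z)$ explicitly, splitting according to the two components of the intensity measure $q$ in \eqref{InteMeasu}, and then to read off all four claims from the resulting closed-form expression. For the branching part ($u=1$), the $y$-integral over $\sum_{i\ge 1}\delta_i$ restricted to $\{0<y\le z\}$ simply contributes a factor $\lfloor z\rfloor$ (for integer $z$), and the $x$-integral against $\nu_{\rm br}=d\delta_{-1}+\sum_i\pi_i\delta_i$ produces $\lfloor z\rfloor\bigl(d\log(1-\tfrac{1}{z+1})+\sum_{i\ge1}\pi_i\log(1+\tfrac{i}{z+1})\bigr)$. For the interaction part ($u=2$), the set $\Delta_z$ has cardinality $\binom{z}{2}=\tfrac{z(z-1)}{2}$, and integrating against $\nu_{\rm int}=2c\delta_{-1}+\sum_i 2b_i\delta_i$ gives $z(z-1)\bigl(c\log(1-\tfrac{1}{z+1})+\sum_{i\ge1}b_i\log(1+\tfrac{i}{z+1})\bigr)$. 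So $h(z)$ is a finite linear combination of the elementary functions $\log(1-\tfrac{1}{z+1})$ and $g_\mu(z):=\sum_i \mu_i\log(1+\tfrac{i}{z+1})$ with $\mu\in\{(\pi_i),(b_i)\}$, scaled by $z$ and $z(z-1)$.

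First I would establish \ref{Prop1}: since $|\log(1+\tfrac{i}{z+1})|\le \tfrac{i}{z+1}\le i$, the two series are bounded by $\sum_i i\pi_i=\mathbf m_{\rm br}+d<\infty$ and $\sum_i ib_i<\infty$ (both finite under the hypotheses, the latter being part of $\mathbf m_{\rm int}$ being well-defined), and the $\delta_{-1}$-term contributes $-\log(1+\tfrac1z)<\infty$ for $z\ge1$; finiteness is immediate. For \ref{Prop2}, the natural extension to $[0,\infty)$ replaces $\lfloor z\rfloor$ by $z$: set $h(z):=z\bigl(d\log\tfrac{z}{z+1}+g_\pi(z)\bigr)+z(z-1)\bigl(c\log\tfrac{z}{z+1}+g_b(z)\bigr)$. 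Near $z=0$ the potentially singular term $z\log\tfrac{z}{z+1}\to 0$, so $h$ is continuous and locally bounded on $[0,\infty)$; on $(0,\infty)$ each building block is $C^1$ with locally bounded derivative (dominated convergence legitimizes differentiating $g_\mu$ termwise, again using $\sum i\mu_i<\infty$), hence $h$ is locally Lipschitz there.

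For \ref{Prop3} I would extract the leading asymptotics as $z\to\infty$. Using $\log(1+\tfrac{i}{z+1})=\tfrac{i}{z+1}+O(\tfrac{i^2}{z^2})$ and $\log\tfrac{z}{z+1}=-\tfrac1{z+1}+O(z^{-2})$, one gets $d\log\tfrac{z}{z+1}+g_\pi(z)=\tfrac{1}{z+1}(-d+\sum_i i\pi_i)+o(z^{-1})=\tfrac{\mathbf m_{\rm br}}{z+1}+o(z^{-1})$ and similarly $c\log\tfrac{z}{z+1}+g_b(z)=\tfrac{-c+\sum_i ib_i}{z+1}+o(z^{-1})=\tfrac{\mathbf m_{\rm int}}{z+1}+o(z^{-1})$; here the error control in $g_b$ needs exactly \eqref{Assump3TWO}, since $\log(1+\tfrac{i}{z+1})-\tfrac{i}{z+1}$ is of order $\tfrac{i^\alpha}{(z+1)^\alpha}$ uniformly and $\sum_i i^\alpha b_i<\infty$. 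Therefore $\varphi(z)=(z+1)h(z)\sim z(z-1)\cdot\tfrac{\mathbf m_{\rm int}}{z+1}\cdot(z+1)/\,\cdots$, more precisely $\varphi(z)/z^2\to \mathbf m_{\rm int}<0$, so $\varphi$ is strictly negative for all large $z$, giving $z_0$. Finally, for \ref{Prop4}, I would show $\psi(z)=e^{-z}\varphi(e^z-1)$ has bounded positive part of its derivative on $(1,\infty)$: writing $w=e^z-1$, $\psi'(z)=-e^{-z}\varphi(w)+\varphi'(w)$, and since $\varphi(w)\sim \mathbf m_{\rm int}w^2<0$ the term $-e^{-z}\varphi(w)\sim -\mathbf m_{\rm int}e^{z}>0$ could a priori be large — so the key point is that it is \emph{exactly cancelled} to leading order by $\varphi'(w)\sim 2\mathbf m_{\rm int}w\sim 2\mathbf m_{\rm int}e^{z}$, which is negative of larger-than-needed size; a careful expansion shows $\psi'(z)$ is bounded above on $(1,\infty)$, and then $\psi(z_1)-\psi(z_2)=\int_{z_2}^{z_1}\psi'\le \kappa(z_1-z_2)$ is not quite the stated form, so instead one uses that $\psi$ itself is bounded above on $(1,\infty)$ (because $\psi(z)\to$ a finite limit or $-\infty$ as $z\to\infty$ thanks to the cancellation, and $\psi$ is continuous on the compact-after-one-point range), whence $\psi(z_1)\le \sup_{(1,\infty)}\psi \le \psi(z_2)+\kappa$ with $\kappa:=\sup_{(1,\infty)}\psi-\inf\cdots$; more cleanly, monotonicity-up-to-$\kappa$ follows once $\psi$ is bounded above and below on the relevant range, which the asymptotics deliver.

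The main obstacle is part \ref{Prop4}: naively $\varphi(e^z-1)$ grows like $e^{2z}$ while the prefactor only kills one factor $e^z$, so without the precise cancellation between $-e^{-z}\varphi$ and the derivative/difference structure the nonexpansiveness would fail. The delicate computation is therefore the second-order expansion of $c\log\tfrac{z}{z+1}+g_b(z)$ — one needs the $\tfrac{1}{(z+1)}$ coefficient $\mathbf m_{\rm int}$ \emph{and} enough control on the remainder (this is precisely where \eqref{Assump3TWO} enters, and why $\alpha>1$ rather than merely $\alpha=1$ is assumed) to conclude that $\psi$ stays bounded on $(1,\infty)$. Everything else is bookkeeping with dominated convergence and elementary inequalities for $\log(1+x)$.
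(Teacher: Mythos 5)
The student's work on parts \ref{Prop1}--\ref{Prop3} is broadly on track, though there is a notable over-assumption in \ref{Prop3}: you invoke \eqref{Assump3TWO} (``the error control in $g_b$ needs exactly \eqref{Assump3TWO}'') to control the second-order term in $\log(1+\tfrac{i}{z+1})$, but Lemma \ref{lemma1} only assumes $\mathbf{m}_{\rm int}<0$ and $\mathbf{m}_{\rm br}<\infty$ — the hypothesis \eqref{Assump3TWO} is not available here. In fact it is not needed: the paper obtains \ref{Prop3} directly from $\log(1+x)\le x$, which gives $h_{\rm int}(z)\le\frac{z(z-1)}{z+1}\mathbf{m}_{\rm int}\to-\infty$, and the sharper asymptotic $\varphi(z)\sim\mathbf{m}_{\rm int}z^2$ in Lemma \ref{lemma2} follows from dominated convergence with the domination $z\log(1+\tfrac{i}{z+1})\le i$ and $\sum ib_i<\infty$ alone. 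So the first moment suffices; no $\alpha>1$ is required.

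The serious problem is in part \ref{Prop4}. You correctly identify that bounding $\psi'$ above by some $\kappa_0$ only delivers $\psi(z_1)-\psi(z_2)\le\kappa_0(z_1-z_2)$, which is not the stated $(0,\kappa)$-nonexpansiveness (a uniform additive constant). Your fallback — ``one uses that $\psi$ is bounded above \dots with $\kappa:=\sup\psi-\inf\dots$'' and ``monotonicity-up-to-$\kappa$ follows once $\psi$ is bounded above and below'' — does not close, because $\psi$ is \emph{not} bounded below: since $\varphi(z)=(z+1)h(z)$, one has identically $\psi(z)=h(e^z-1)$, and $h(w)\to-\infty$ as $w\to\infty$ (that was precisely part \ref{Prop3}), so $\psi(z)\to-\infty$. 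Also your description of the ``cancellation'' is off: $-e^{-z}\varphi(w)\sim-\mathbf{m}_{\rm int}e^z$ and $\varphi'(w)\sim2\mathbf{m}_{\rm int}e^z$ do not cancel; they sum to $\mathbf{m}_{\rm int}e^z\to-\infty$, so $\psi'$ is eventually negative but unbounded below — there is no cancellation to a finite limit, and hence no a priori boundedness of $\psi$ to exploit. The paper's argument sidesteps differentiation entirely: it writes $h=f+\hat h$ where $f(z)=-cz+z(z-1)\sum_ib_i\log(1+\tfrac{i}{z+1})$ is shown to be decreasing on $(1,\infty)$ and $\hat h(z)=h_{\rm br}(z)-z(z-1)c\log(1+1/z)+cz$ is shown to be bounded there via the elementary estimate $|-z(z-1)c\log(1+1/z)+cz|\le 2c$ and boundedness of $h_{\rm br}$. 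Then $\psi(z)=f(e^z-1)+\hat h(e^z-1)$ is a decreasing function plus a bounded perturbation, so $\psi(z_1)-\psi(z_2)\le 2\|\hat h\|_\infty=:\kappa$ for $z_1>z_2$. Your derivative-based idea could be repaired (take $z^*$ so that $\psi'\le 0$ on $(z^*,\infty)$, bound $\psi'$ by $\kappa_0$ on the compact $[1,z^*]$, and set $\kappa=\kappa_0(z^*-1)$), but you would still need to justify $\psi'(z)\to-\infty$, which requires differentiating under the infinite sum with the appropriate domination — more work than the paper's algebraic split.
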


\begin{proof}
Recall that, for $x \geq 0$, 
\begin{align} \label{logIne1}
\frac{x}{x+1} \leq \ln (1+x) \leq x.
\end{align}

First, we prove \ref{Prop1}. If $z=0$, then $H(0, u, y, x)=0$, for all $u=1,2$, $y \in \mathbb{N} \cup \Delta$ and $x \in \mathbb{N}_{-1}$ and thus, \ref{Prop1} holds. For $z \in \mathbb{N}$, \ref{Prop1} follows from \eqref{InteMeasu}, \eqref{funcH}, \eqref{logIne1} and our assumptions $\mathbf{m}_{\rm int}<0$ and $\mathbf{m}_{\rm br}<\infty$, that is,
\begin{align}  \label{eq8s}
\int_{\mathcal{X}} \Big|\log \Big(1+\frac{H(z, u, y, x)}{z+1} \Big)\Big| q({\rm d} u, {\rm d} y, {\rm d} x) & = z \int_{\mathbb{N}_{-1}} \Big|\log \Big(1+ \frac{x}{z+1} \Big) \Big| \nu_{\rm br}({\rm d} x) \nonumber \\
& \quad \quad  \quad + \frac{z(z-1)}{2} \int_{\mathbb{N}_{-1}} \Big|\log \Big( 1+\frac{x}{z+1} \Big)\Big| \nu_{\rm int}({\rm d} x) \nonumber \\
& \leq d + \sum_{i \geq 1} i \pi_{i} + zc + z \sum_{i \geq 1} i b_{i} < \infty.
\end{align}

Next, we prove \ref{Prop2}. Note that by \eqref{InteMeasu} and \eqref{funcH}, 
\begin{align} \label{eq9s}
h(z) &  = z \int_{\mathbb{N}_{-1}} \log \Big( 1+ \frac{x}{z+1} \Big) \nu_{\rm br}({\rm d} x) + \frac{z(z-1)}{2} \int_{\mathbb{N}_{-1}} \log \Big(1+ \frac{x}{z+1} \Big) \nu_{\rm int}({\rm d} x) = h_{\rm br}(z)+h_{\rm int}(z),
\end{align}
\noindent for $z \in \mathbb{N}_{0}$, where 
\begin{align} \label{branchingh}
h_{\rm br}(z) := - z d \log \Big( 1 + \frac{1}{z} \Big) + z\sum_{i \geq 1} \pi_{i} \log \Big( 1+ \frac{i}{z+1} \Big)
\end{align}
\noindent and 
\begin{align} \label{interationh}
h_{\rm int}(z) := - z(z-1) c \log\Big( 1 + \frac{1}{z} \Big)  + z(z-1)\sum_{i \geq 1} b_{i} \log \Big( 1+ \frac{i}{z+1} \Big).
\end{align}
\noindent Clearly, $h$ (and thus, $h_{\rm br}$ and $h_{\rm int}$) can be extended to $[0,\infty)$. In particular, \eqref{eq8s} remains valid and it follows that $h$ is locally bounded on $[0,\infty)$. On the other hand, note that, for $0< x \leq y$, \eqref{logIne1} implies that
\begin{align} \label{logIne}
|\log (x) - \log(y)| = \log \left( \frac{y}{x} \right) = \log \left(1 + \left( \frac{y}{x} - 1 \right) \right) \leq \frac{y-x}{x}.
\end{align}
\noindent Then, by \eqref{logIne} and our assumptions that $\mathbf{m}_{\rm int}<0$ and $\mathbf{m}_{\rm br}<\infty$, it is not difficult to deduce that $h$ is locally Lipschitz on $(0, \infty)$, which concludes with the proof of \ref{Prop2}. 

We now prove \ref{Prop3}. By \eqref{logIne1} and  since $\mathbf{m}_{\rm br}<\infty$, $h_{\rm br}$ is a bounded function on $[0, \infty)$. On the other hand, by \eqref{logIne1}, for $z \in [0, \infty)$, 
\begin{align}
h_{\rm int}(z) & \leq - z(z-1) c \log\Big( 1 + \frac{1}{z} \Big) + (z-1) \sum_{i \geq 1} i b_{i} \nonumber \\
& = - (z-1) c \left( \log\Big( 1 + \frac{1}{z} \Big) -1 \right) + (z-1) \mathbf{m}_{\rm int} \nonumber \\
& \leq \frac{z-1}{z+1} c + (z-1) \mathbf{m}_{\rm int}.
\end{align}
\noindent In particular, since  $\mathbf{m}_{\rm int}<0$, $h_{\rm int}(z) \rightarrow - \infty$, as $z \rightarrow \infty$, which implies \ref{Prop3}. 

Finally, we prove \ref{Prop4}. For $z \in [0,\infty)$, we have that $h(z) = -c z + h_{\rm int}(z) + z(z-1) c \log( 1 + 1/z) + \hat{h}(z)$, where $\hat{h}(z) = h_{\rm br}(z) - z(z-1) c \log( 1 + 1/z) + c z$. For $x, y >0$, we have that
\begin{align} \label{AnotherLogIne}
\left| \frac{1}{x}- \frac{1}{y}  \right| = \frac{|x-y|}{xy}. 
\end{align}
\noindent Then, by the triangle inequality and \eqref{AnotherLogIne}, we have that, for $z \in (1, \infty)$,
\begin{align}  \label{eq12s}
\Big | - z(z-1) c \log( 1 + 1/z) + c z   \Big | & \leq z(z-1)\Big | c \int_{0}^{-1} \Big(\frac{1}{z+1+y} -  \frac{1}{z-1}\Big) {\rm d}y   \Big | \leq 2c. 
\end{align}
\noindent Recall that $h_{\rm br}$ is bounded on $[0, \infty)$ and thus, \eqref{eq12s} implies that $\hat{h}$ is a bounded function on $(1, \infty)$. Moreover, $z \mapsto -c z + h_{\rm int}(z) + z(z-1) c \log( 1 + 1/z)$ is a decreasing function on $(1, \infty)$. Then, \ref{Prop4} follows with $\kappa = 2||\hat{h}||_{\infty}$. 
\end{proof}

Let $z_0 > 1$ be such that \ref{Prop3} in Lemma \ref{lemma1} is satisfied. Recall the definition of the function $\varphi$ in Lemma \ref{lemma1} \ref{Prop3}. By Lemma \ref{lemma1} \ref{Prop1}-\ref{Prop2}, $\varphi$ is well-defined and locally Lipschitz on $D \coloneqq (z_0, \infty)$. We introduce the flow $\phi$ associated to $\varphi$ and defined for $z \in D$ as the unique solution of
\begin{align} \label{Flow}
 \quad \frac{\partial}{\partial t} \phi(z,t) = \varphi(\phi(z,t)), \qquad \textrm{with}\quad \phi(z,0) = z,
\end{align}
\noindent for $t \in [0, T(z))$, where $T(z) \in (0, \infty]$ is the maximal time until which the solution exists and belongs to $D$. Observe that $z \in D \rightarrow \phi(z,t)$ is increasing where it is well-defined. Then $T(z)$ is increasing and its limit when $z \uparrow \infty$ is denoted by $T(\infty)$ and belongs to $(0, \infty]$. Moreover, the flow starting from infinity is well-defined by a monotone limit,
\begin{align} \label{Def1Speed}
v_{t} \coloneqq \phi(\infty,t) = \lim_{z \rightarrow \infty} \phi(z,t),
\end{align}
\noindent for any $t \in [0, T(\infty))$. By Lemma \ref{lemma1} \ref{Prop3}, for $z \in D$, $\varphi(z) <0$ and for any $t < T(z)$, 
\begin{align}
\int_{z}^{\phi(z,t)} \frac{{\rm d} x}{\varphi(x)} = t.
\end{align}

\begin{lemma} \label{lemma2}
Suppose that $\mathbf{m}_{\rm int}<0$ and $\mathbf{m}_{\rm br}<\infty$. Then, $\varphi(z) \sim z^{2} \mathbf{m}_{\rm int}$, as $z \rightarrow \infty$. In particular, for any $z \in D$, 
\begin{align}
-\int_{z}^{\infty} \frac{{\rm d} x}{\varphi(x)} < \infty.
\end{align}
\end{lemma}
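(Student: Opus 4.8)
The plan is to show that $\varphi(z)\sim z^2\mathbf{m}_{\rm int}$ as $z\to\infty$, and then deduce the integrability statement as a routine consequence. Recall $\varphi(z)=(z+1)h(z)$ with $h=h_{\rm br}+h_{\rm int}$ as in \eqref{eq9s}--\eqref{interationh}. The key point is that $h_{\rm br}$ is bounded on $[0,\infty)$ (shown in the proof of Lemma~\ref{lemma1}\ref{Prop3}), so $(z+1)h_{\rm br}(z)=O(z)$, which is negligible compared to the expected order $z^2$. Hence it suffices to prove $h_{\rm int}(z)\sim z\,\mathbf{m}_{\rm int}$ as $z\to\infty$, since then $\varphi(z)=(z+1)h_{\rm int}(z)+O(z)\sim z^2\mathbf{m}_{\rm int}$.

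For the asymptotics of $h_{\rm int}$, I would work term by term in \eqref{interationh}. For the competition part, $-z(z-1)c\log(1+1/z)=-z(z-1)c\big(\tfrac1z+O(z^{-2})\big)=-cz+O(1)\sim -cz$. For the cooperative part, write $z(z-1)\sum_{i\ge1}b_i\log\big(1+\tfrac{i}{z+1}\big)$; since $0\le\log(1+i/(z+1))\le i/(z+1)$ by \eqref{logIne1}, the summand is bounded by $z(z-1)b_i\,i/(z+1)\le z\,i\,b_i$, and $\sum_i ib_i<\infty$ (which holds since $\mathbf{m}_{\rm int}<0$ and $\mathbf{m}_{\rm br}<\infty$ force $\sum_i ib_i<\infty$, as noted at the start of Section~\ref{MainResultSec}). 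So I can divide by $z$ and apply dominated convergence with dominating function $i\,b_i$: for each fixed $i$, $\frac{z(z-1)}{z}\log\big(1+\tfrac{i}{z+1}\big)=\frac{z-1}{1}\cdot\log\big(1+\tfrac{i}{z+1}\big)\to i$ as $z\to\infty$. Therefore $\tfrac1z\cdot z(z-1)\sum_i b_i\log(1+i/(z+1))\to\sum_i ib_i$, i.e.\ this part is $\sim z\sum_i ib_i$. Combining, $h_{\rm int}(z)\sim z(-c+\sum_i ib_i)=z\,\mathbf{m}_{\rm int}$, and hence $\varphi(z)\sim z^2\mathbf{m}_{\rm int}$.

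The integrability claim then follows easily: since $\mathbf{m}_{\rm int}<0$, the asymptotic equivalence $\varphi(z)\sim z^2\mathbf{m}_{\rm int}$ gives, for some $z_1>z_0$ and a constant $\kappa_1>0$, that $-\varphi(x)\ge \kappa_1 x^2$ for all $x\ge z_1$; together with local boundedness and strict negativity of $\varphi$ on $(z_0,\infty)$ (Lemma~\ref{lemma1}\ref{Prop2}--\ref{Prop3}), this shows $x\mapsto -1/\varphi(x)$ is locally bounded on $(z_0,\infty)$ and bounded above by $1/(\kappa_1 x^2)$ near infinity, so $-\int_z^\infty \frac{{\rm d}x}{\varphi(x)}<\infty$ for every $z\in D$.

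The only mildly delicate step is the dominated-convergence argument for the cooperative sum, where one must be careful that the dominating function $i\,b_i$ is summable and uniformly controls the summands for all large $z$; this is exactly where the hypothesis $\sum_i ib_i<\infty$ enters, and it is the crux of the lemma. Everything else is elementary Taylor expansion of $\log(1+1/z)$ and bookkeeping of lower-order terms. Note also that assumption \eqref{Assump3TWO} is not needed for this lemma---only the first-moment condition on $(b_i)$, which is already implied by $\mathbf{m}_{\rm int}<0$ and $\mathbf{m}_{\rm br}<\infty$.
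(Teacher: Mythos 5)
Your proof is correct and takes essentially the same approach as the paper's (which is stated tersely as ``follows from the definition of $\varphi$, \eqref{eq9s} and the dominated convergence theorem''): you decompose $h = h_{\rm br} + h_{\rm int}$, dispose of the $O(z)$ branching contribution, Taylor-expand the competition term, and apply dominated convergence to the cooperative sum with dominating function $i b_i$. Your closing remark that \eqref{Assump3TWO} is not needed here and that only $\sum_i i b_i < \infty$ (implied by $\mathbf{m}_{\rm int}<0$) enters is also consistent with the lemma's hypotheses.
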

\begin{proof}
The first claim follows from the definition of $\varphi$,  \eqref{eq9s} and the dominated convergence theorem. The second claim follows clearly from the first claim. 
\end{proof}

By Lemma \ref{lemma2}, we have that $v_{t}$ satisfies 
\begin{align} \label{Def2Speed}
-\int_{v_{t}}^{\infty} \frac{{\rm d} x}{\varphi(x)} = t,
\end{align}
\noindent and in particular,
\begin{align}
v_{t} = \inf \Big \{ z \in D:  \int_{z}^{\infty} \frac{{\rm d} x}{- \varphi(x)} < t \Big\} < \infty,
\end{align}
\noindent for any $t \in (0, T(\infty))$. Following Section 4 in \cite{Bansaye2019}, one says that the dynamical system instantaneously comes down from infinity. The function $t \in [0, T(\infty)) \rightarrow v_{t} \in \overline{\mathbb{R}}$ is continuous when $\overline{\mathbb{R}}$ is endowed with the distance $\overline{d}(x,y) = |e^{-x}-e^{-y}|$, for $x,y \in  \overline{\mathbb{R}}$. 

\begin{lemma} \label{lemma6}
Suppose that $\mathbf{m}_{\rm int}<0$ and $\mathbf{m}_{\rm br}<\infty$. We have the following:
\begin{enumerate}[label=(\roman*)]
\item $v_{t} >0$, for all $t \in (0, T(\infty))$ and $\lim_{t \downarrow 0} v_{t} = \infty$.   \label{PropB1}

\item the map $t\mapsto v_t$ is differentiable on $(0, T(\infty))$ and $\frac{\rm d}{\rm d t} v_{t} = \varphi(v_{t})$,  for $t \in (0, T(\infty))$. In particular, $t\mapsto v_t$ is decreasing.  \label{PropB2}

\item $v_{t} \sim -\frac{1}{t\mathbf{m}_{\rm int}}$, as $t \downarrow 0$.  \label{PropB3}
\end{enumerate}
\end{lemma}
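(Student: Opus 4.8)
The plan is to prove the three assertions of Lemma \ref{lemma6} in order, each following quickly from the results already established, especially Lemma \ref{lemma1}, Lemma \ref{lemma2} and the identity \eqref{Def2Speed}.

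For part \ref{PropB1}, I would argue that $v_t > 0$ for every $t \in (0, T(\infty))$ simply because $v_t \in D = (z_0, \infty)$ by construction of the flow started from infinity, and $z_0 > 1 > 0$; the monotone limit \eqref{Def1Speed} cannot exit $D$ while $t < T(\infty)$. For the claim $\lim_{t \downarrow 0} v_t = \infty$, I would use the defining relation \eqref{Def2Speed}: since $-\int_z^\infty \mathrm{d}x/\varphi(x) < \infty$ for every $z \in D$ by Lemma \ref{lemma2}, and this integral tends to $0$ as $z \to \infty$ (tail of a convergent integral), the function $t \mapsto v_t$ must blow up as $t \downarrow 0$. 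Concretely, for any large $M$, choosing $t$ smaller than $-\int_M^\infty \mathrm{d}x/\varphi(x)$ forces $v_t > M$.

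For part \ref{PropB2}, the differentiability and the ODE $\frac{\mathrm{d}}{\mathrm{d}t} v_t = \varphi(v_t)$ follow by differentiating \eqref{Def2Speed} with respect to $t$, which is legitimate because $\varphi$ is continuous and strictly negative on $D$ (Lemma \ref{lemma1}\ref{Prop3}), so $x \mapsto -1/\varphi(x)$ is continuous on $D$ and the fundamental theorem of calculus applies to the integral $-\int_{v_t}^\infty \mathrm{d}x/\varphi(x) = t$. Since $\varphi < 0$ on $D$ and $v_t \in D$, we get $\frac{\mathrm{d}}{\mathrm{d}t} v_t = \varphi(v_t) < 0$, so $t \mapsto v_t$ is (strictly) decreasing. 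Alternatively one can pass to the limit in the flow equation \eqref{Flow} using monotonicity, but differentiating \eqref{Def2Speed} is cleaner.

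For part \ref{PropB3}, the asymptotic $v_t \sim -1/(t\,\mathbf{m}_{\rm int})$ as $t \downarrow 0$ is a standard Abelian argument combining Lemma \ref{lemma2} with \eqref{Def2Speed}. Since $\varphi(x) \sim \mathbf{m}_{\rm int} x^2$ as $x \to \infty$ and $\mathbf{m}_{\rm int} < 0$, for any $\delta > 0$ there is $x_\delta$ with $(1+\delta)\mathbf{m}_{\rm int} x^2 \le \varphi(x) \le (1-\delta)\mathbf{m}_{\rm int} x^2$ for $x \ge x_\delta$; integrating $-1/\varphi(x)$ over $[v_t, \infty)$ and using $-\int_{v_t}^\infty \mathbf{m}_{\rm int}^{-1} x^{-2}\,\mathrm{d}x = -1/(\mathbf{m}_{\rm int} v_t)$ gives, for $t$ small enough that $v_t \ge x_\delta$ (which holds by part \ref{PropB1}), the two-sided bound $-\frac{1}{(1-\delta)\mathbf{m}_{\rm int} v_t} \le t \le -\frac{1}{(1+\delta)\mathbf{m}_{\rm int} v_t}$; equivalently $-\frac{1}{(1+\delta)\mathbf{m}_{\rm int} t} \le v_t \le -\frac{1}{(1-\delta)\mathbf{m}_{\rm int} t}$. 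Letting $\delta \downarrow 0$ yields $t\,v_t \to -1/\mathbf{m}_{\rm int}$, i.e. $v_t \sim -1/(t\,\mathbf{m}_{\rm int})$. I do not expect a genuine obstacle here; the only point requiring a little care is making the tail-integral comparison uniform, i.e. handling the contribution of the fixed bounded piece $\int_{v_t}^{x_\delta}$, but since $v_t \to \infty$ this piece is eventually empty, so it causes no trouble.
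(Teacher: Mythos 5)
Your proof is correct and in substance identical to the paper's: parts \ref{PropB1} and \ref{PropB2} are argued the same way, and for part \ref{PropB3} you replace the paper's one-line L'H\^opital computation applied to $t v_t = v_t\int_{v_t}^\infty \frac{\mathrm{d}x}{-\varphi(x)}$ with an explicit squeeze based on $\varphi(x)\sim \mathbf{m}_{\rm int}x^2$; both rest on Lemma \ref{lemma2} and \eqref{Def2Speed} and are interchangeable. One small slip worth flagging: since $\mathbf{m}_{\rm int}<0$, the correct two-sided bound on $t$ reads $-\tfrac{1}{(1+\delta)\mathbf{m}_{\rm int}v_t}\le t\le -\tfrac{1}{(1-\delta)\mathbf{m}_{\rm int}v_t}$ (your intermediate display has $1\pm\delta$ swapped), though the subsequent bound on $v_t$ and the limit $tv_t\to -1/\mathbf{m}_{\rm int}$ are stated correctly, so the conclusion stands.
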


\begin{proof}
By \eqref{Def1Speed}, $v_{t} \in D$ and thus $v_{t} >0$, for $t \in (0, T(\infty))$. By Lemma \ref{lemma1} \ref{Prop3}, $-\varphi(\cdot)$ is strictly positive on $D$. Then, Lemma \ref{lemma2} implies that $z \mapsto g(z) \coloneqq -\int_{z}^{\infty}\frac{{\rm d} x}{\varphi(x)}$ maps $D=(z_0, \infty)$ bijectively to $(0, -\int_{z_0}^{\infty}\frac{{\rm d} x}{\varphi(x)})$. Moreover, $g$ is decreasing on $(z_0, \infty)$. Then, it should be clear that $\lim_{t \downarrow 0} v_{t} = \infty$, which proves \ref{PropB1}. \ref{PropB2} follows by \eqref{Def2Speed} and the fundamental theorem of calculus; recall that for $z \in D$, $\varphi(z) <0$. From \eqref{Def2Speed}, we see that $t v_{t} = v_{t} \int_{v_{t}}^{\infty} \frac{{\rm d} x}{-\varphi(x)}$, for $t \in (0, T(\infty))$. Then,  L'H\^opital's rule and Lemma \ref{lemma2} imply \ref{PropB3}, that is,
\begin{align}
\lim_{t \downarrow 0} - \mathbf{m}_{\rm int} t v_{t} = - \mathbf{m}_{\rm int} \lim_{z \rightarrow \infty} z \int_{z}^{\infty} \frac{{\rm d} x}{-\varphi(x)} = - \mathbf{m}_{\rm int} \lim_{z \rightarrow \infty} \frac{\frac{1}{\varphi(z)}}{- \frac{1}{z^{2}}} =1.
\end{align}
This completes the proof.
\end{proof}

The following lemma shows that condition (21) in Assumption 4.3 of \cite{Bansaye2019} is satisfied. 

\begin{lemma} \label{lemma1TWO}
Suppose that $\mathbf{m}_{\rm int}<0, \mathbf{m}_{\rm br}<\infty$ and \eqref{Assump3TWO} hold. Define the function 
\begin{align} \label{eq5TWO}
 V(z) \coloneqq \int_{\mathcal{X}} \Big(\log \Big(1+\frac{H(z, u, y, x)}{z+1} \Big) \Big)^{2} q({\rm d} u, {\rm d} y, {\rm d} x),
\end{align}
\noindent for $z \in \mathbb{N}_{0}$. Then, for all $t \in (0, \infty)$ and any $\varepsilon >0$,
\begin{align} \label{eq1TWO}
\int_{0}^{t} \widehat{V}_{\varepsilon}(z_0, s) {\rm d} s < \infty,
 \end{align}
\noindent where $\widehat{V}_{\varepsilon}(z_0, t) = \sup_{z \in \mathbb{N}_{0} \cap \mathfrak{D}_{\varepsilon}(z_0, t)} \varepsilon^{-2} V(z)$, with $\mathfrak{D}_{\varepsilon}(z_0, t) \coloneqq \{ z \in (z_0, \infty): \log(1+z) \leq \log(1+v_{t}) + \varepsilon \}$. 
\end{lemma}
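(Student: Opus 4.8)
The plan is to estimate the integrand $\widehat{V}_\varepsilon(z_0, s)$ for small $s$ and show it is integrable near $0$; integrability away from $0$ will be automatic from local boundedness. First I would compute $V(z)$ explicitly from \eqref{InteMeasu} and \eqref{funcH}, exactly as was done for $h$ in \eqref{eq9s}: one gets
\begin{align}
V(z) = z\int_{\mathbb{N}_{-1}} \Big(\log\Big(1+\tfrac{x}{z+1}\Big)\Big)^2 \nu_{\rm br}({\rm d}x) + \frac{z(z-1)}{2}\int_{\mathbb{N}_{-1}} \Big(\log\Big(1+\tfrac{x}{z+1}\Big)\Big)^2 \nu_{\rm int}({\rm d}x). \nonumber
\end{align}
Using $0 \le \log(1+u) \le u$ for $u \ge 0$ and, for the $x=-1$ (competition/mutation) atoms, the bound $|\log(1+\tfrac{-1}{z+1})| = \log(1+\tfrac1z) \le \tfrac1z$ valid for $z \ge 1$, the branching part is bounded by $z\big(d z^{-2} + (z+1)^{-2}\sum_i i^2\pi_i\big)$ which is $O(1)$ uniformly in $z \ge 1$ as soon as $\sum_i i^2\pi_i < \infty$; however, I only have $\mathbf m_{\rm br} = d + \sum_i i\pi_i < \infty$ here, so I would instead split the sum and use $\log(1+\tfrac{i}{z+1}) \le \tfrac{i}{z+1}$ together with $\log(1+\tfrac{i}{z+1}) \le \log(1+i)$, giving $z\sum_i \pi_i \tfrac{i}{z+1}\log(1+i)$; but $\log(1+i) \le C_\delta i^{\delta}$... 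Actually the cleanest route is: for the branching part use $\big(\log(1+\tfrac{x}{z+1})\big)^2 \le \tfrac{x}{z+1}\log(1+\tfrac{x}{z+1}) \le \tfrac{x^2}{(z+1)^2}$ is too lossy; instead bound $\big(\log(1+\tfrac{i}{z+1})\big)^2 \le \tfrac{i}{z+1}\cdot\log(1+i)$ and note $z \cdot \tfrac{1}{z+1} \le 1$, so the branching contribution is at most $d + \sum_i i\pi_i\log(1+i)$ which is still not obviously finite under $\mathbf m_{\rm br}<\infty$ alone — so I would instead observe $\widehat V_\varepsilon$ only needs to be integrable in $s$, so a factor of $\log(1+v_t)$ blowing up like $\log(1/t)$ is harmless; concretely $\big(\log(1+\tfrac{i}{z+1})\big)^2 \le \big(\log(1+v_t+1)\big)\cdot\tfrac{i}{z+1}$ on $\mathfrak D_\varepsilon(z_0,t)$ (up to the $\varepsilon$ shift), yielding a branching bound $O(\log(1+v_t))$, which is $O(\log(1/t))$ by Lemma \ref{lemma6}\ref{PropB3} and hence integrable near $0$.

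Second, and this is where \eqref{Assump3TWO} enters, I would handle the interaction part $\tfrac{z(z-1)}{2}\int (\log(1+\tfrac{x}{z+1}))^2\nu_{\rm int}({\rm d}x)$. The competition atom $2c\delta_{-1}$ contributes $\tfrac{z(z-1)}{2}\cdot 2c\cdot(\log(1+\tfrac1z))^2 \le c z(z-1) z^{-2} \le c$, bounded. For the cooperation atoms $\sum_i 2b_i\delta_i$, I use the interpolation $\big(\log(1+\tfrac{i}{z+1})\big)^2 = \big(\log(1+\tfrac{i}{z+1})\big)^{2-\alpha}\big(\log(1+\tfrac{i}{z+1})\big)^{\alpha} \le \big(\tfrac{i}{z+1}\big)^{2-\alpha} \cdot \big(\log(1+v_t + \text{shift})\big)^{\alpha - ?}$... more simply: $\log(1+\tfrac{i}{z+1}) \le \tfrac{i}{z+1}$ gives one power, and $\log(1+\tfrac{i}{z+1}) \le \log(1+i) \le C i^{\alpha-1}$ (for $\alpha>1$, with $C$ depending on $\alpha$) gives another, so $\big(\log(1+\tfrac{i}{z+1})\big)^2 \le \tfrac{i}{z+1}\cdot C i^{\alpha-1} = \tfrac{C}{z+1} i^{\alpha}$. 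Hence the cooperation contribution is at most $\tfrac{z(z-1)}{2}\cdot 2\cdot \tfrac{C}{z+1}\sum_i i^\alpha b_i \le C' z \sum_i i^\alpha b_i$, which is $O(z)$ — unbounded, but on $\mathfrak D_\varepsilon(z_0,t)$ we have $z \le e^{\varepsilon}(1+v_t) - 1 = O(v_t) = O(1/t)$, so $\varepsilon^{-2}V(z) = O(1/t)$ on that set, and $\int_0^t O(1/s)\,{\rm d}s$ diverges logarithmically — \emph{too slow}. I must be sharper: I should keep the $\tfrac{z(z-1)}{z+1}\le z$ but note the full bound is $\sup_z \varepsilon^{-2} z \sum_i i^\alpha b_i$; since $\int_0^t \widehat V_\varepsilon(z_0,s)\,{\rm d}s$ needs finiteness and $v_s \sim 1/(-\mathbf m_{\rm int} s)$, we get $\int_0^t c/s\,{\rm d}s = \infty$. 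So the correct move is to \emph{not} maximize so crudely: split the cooperation sum at level $N = N(z)$, using $\log(1+\tfrac{i}{z+1}) \le \tfrac{i}{z+1}$ for $i \le z$ (contributing $\le \tfrac{z^2}{(z+1)^2}\sum_{i\le z} i^? b_i$, which after squaring and multiplying by $z(z-1)$ stays $O(1)$ provided $\sum i^2 b_i<\infty$, i.e. $\alpha=2$; for $\alpha<2$ use $(\tfrac{i}{z+1})^2 \le (\tfrac{i}{z+1})^\alpha$ since $i\le z+1$, giving $\le (z+1)^{-\alpha}\sum i^\alpha b_i$, times $z(z-1) \sim z^2$, so $O(z^{2-\alpha})$) and $\log(1+\tfrac{i}{z+1}) \le \log(1+i)$ for $i>z$ (with the tail $\sum_{i>z} b_i \log^2(1+i) \le z^{-(\alpha - 1 -\eta)}\sum i^\alpha b_i \to 0$, times $z^2$, giving $o(z^{2-\alpha+\eta})$).

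So the genuine conclusion is $V(z) = O(z^{2-\alpha})$ as $z \to \infty$ (with $V$ bounded when $\alpha = 2$), and on $\mathfrak D_\varepsilon(z_0,t)$, $\widehat V_\varepsilon(z_0,t) = O(v_t^{2-\alpha}) = O(t^{-(2-\alpha)})$ by Lemma \ref{lemma6}\ref{PropB3}; since $2-\alpha < 1$, $\int_0^t s^{-(2-\alpha)}\,{\rm d}s < \infty$, proving \eqref{eq1TWO}. Finiteness of $\int_\delta^t$ for $\delta>0$ is immediate since $V$ is continuous and $z \mapsto v_z$ is bounded away from $0$ and $\infty$ on $[\delta,t]$ (Lemma \ref{lemma6}\ref{PropB1}), so $\mathfrak D_\varepsilon(z_0,s)$ is a bounded set uniformly in $s\in[\delta,t]$ and the sup is over a fixed finite set. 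The main obstacle, as the above discussion shows, is getting the \emph{sharp} power $z^{2-\alpha}$ rather than the naive $z$: one must split the cooperation sum at the threshold $i \approx z$ and use $i \le z+1 \Rightarrow (\tfrac{i}{z+1})^2 \le (\tfrac{i}{z+1})^\alpha$ for the low part and the $\alpha$-moment tail bound $\sum_{i>z} i^\alpha b_i = o(1)$ (combined with $\log(1+i) \le C_\eta i^\eta$ for arbitrarily small $\eta$) for the high part — this is exactly the place where \eqref{Assump3TWO} is used and where the borderline $\int_0^t s^{-(2-\alpha)}\,{\rm d}s < \infty$ is salvaged.
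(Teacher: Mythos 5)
Your final estimate is the right one: $V(z)=O(z^{2-\alpha})$ combined with $v_s\lesssim s^{-1}$ (Lemma \ref{lemma6}\ref{PropB3}) gives $\widehat V_\varepsilon(z_0,s)=O(s^{-(2-\alpha)})$, which is integrable on $(0,t]$ because $\alpha>1$. That is exactly the skeleton of the paper's proof. However, the route you take to the estimate is substantially more laborious than necessary, and one piece of your argument does not actually work as stated.

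On the interaction part, you propose splitting the sum over $i$ at the threshold $i\approx z$, using $(\tfrac{i}{z+1})^2\le(\tfrac{i}{z+1})^\alpha$ below the threshold and a tail estimate for $i>z$ based on $\log(1+i)\le C_\eta i^\eta$ together with the $\alpha$-moment. This does produce $V(z)=O(z^{2-\alpha+2\eta})$, which suffices after taking $\eta<(\alpha-1)/2$, but the paper bypasses the split entirely by observing the single inequality $\log(1+u)\le\sqrt2\,u^{\alpha/2}$ for $u\ge0$ and $\alpha\in(1,2]$. Applied with $u=i/(z+1)$, it gives
$\big(\log(1+\tfrac{i}{z+1})\big)^2\le 2\big(\tfrac{i}{z+1}\big)^\alpha$
directly, and hence $\tfrac{z(z-1)}{2}\int(\log(1+\tfrac{x}{z+1}))^2\nu_{\rm int}({\rm d}x)\le 2(c+\sum_i i^\alpha b_i)\,z^{2-\alpha}$ in one line, with the clean exponent $2-\alpha$ and no $\eta$-loss. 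This is where \eqref{Assump3TWO} enters the paper's proof.

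The genuine gap is in your handling of the branching part. Your attempted bound $(\log(1+\tfrac{i}{z+1}))^2\le\tfrac{i}{z+1}\log(1+i)$ leads to the requirement $\sum_i i\pi_i\log(1+i)<\infty$, which is not implied by $\mathbf m_{\rm br}<\infty$; and your subsequent attempt to replace $\log(1+i)$ by a factor $\log(1+v_t)$ is incorrect, because on $\mathfrak D_\varepsilon(z_0,t)$ it is $z$ that is bounded in terms of $v_t$, not $i$ — the quantity $\log(1+\tfrac{i}{z+1})$ is unbounded in $i$ for any fixed $z$. The correct (and simpler) step is the $\alpha=2$ case of the same inequality, namely $\log(1+u)\le\sqrt{2u}$, which gives $(\log(1+\tfrac{i}{z+1}))^2\le\tfrac{2i}{z+1}$ and hence $z\int(\log(1+\tfrac{x}{z+1}))^2\nu_{\rm br}({\rm d}x)\le 2(d+\sum_i i\pi_i)$, a bound that is uniform in $z$ and uses nothing beyond $\mathbf m_{\rm br}<\infty$. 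With this correction your proof closes, but the two elementary power-of-$\log$ inequalities are the missing simplification that makes the whole lemma a short computation rather than a case analysis.
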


\begin{proof}
If $z=0$, then $H(0, u, y, x)=0$, for $u=1,2$, $y \in \mathbb{N} \cup \Delta$ and $x \in \mathbb{N}_{-1}$ and thus, $V(0) = 0$. For $z \in (0, \infty)$, 
\begin{align}  \label{eq2TWO}
V(z) = z \int_{\mathbb{N}_{-1}} \Big(\log \Big(1+ \frac{x}{z+1} \Big) \Big)^{2} \nu_{\rm br}({\rm d} x)  + \frac{z(z-1)}{2} \int_{\mathbb{N}_{-1}} \Big(\log \Big( 1+\frac{x}{z+1} \Big)\Big)^{2} \nu_{\rm int}({\rm d} x).
\end{align}
\noindent  On the one hand, observe that $\log(1+x) \leq \sqrt{2x}$, $x \geq 0$. Then, it follows from \eqref{Measuresbrint} that, for $z \in (0, \infty)$, 
\begin{align}  \label{eq3TWO}
z \int_{\mathbb{N}_{-1}} \Big(\log \Big(1+ \frac{x}{z+1} \Big) \Big)^{2} \nu_{\rm br}({\rm d} x) & = zd \Big(\log \Big(1+ \frac{1}{z} \Big) \Big)^{2} + z\sum_{i \geq 1} \pi_{i} \Big(\log \Big(1+ \frac{i}{z+1} \Big) \Big)^{2} \nonumber \\
& \leq  2 \Big(d +  \sum_{i \geq 1} i \pi_{i} \Big),
\end{align} 
\noindent which is finite from our assumptions.

On the other hand, observe that $\log(1+x) \leq \sqrt{2} x^{\alpha/2}$, $x \geq 0$. Then, it follows from the previous inequality, \eqref{Measuresbrint} and \eqref{Assump3TWO} that, for $z \in (0, \infty)$, 
\begin{align}  
\frac{z(z-1)}{2} \int_{\mathbb{N}_{-1}} \Big(\log \Big(1+ \frac{x}{z+1} \Big) \Big)^{2} \nu_{\rm int}({\rm d} x) & = z(z-1)c \Big(\log \Big(1+ \frac{1}{z} \Big) \Big)^{2} + z(z-1)\sum_{i \geq 1} b_{i} \Big(\log \Big(1+ \frac{i}{z+1} \Big) \Big)^{2} \nonumber \\ 
& \leq  2  \Big(c+\sum_{i\geq 1} i^{\alpha}b_{i} \Big)z^{2-\alpha}. \label{eq4TWO}
\end{align} 
\noindent By combining \eqref{eq2TWO}, \eqref{eq3TWO} and \eqref{eq4TWO}, we see that there exists a constant $C_{\alpha} >0$ such that $V(z) \leq C_{\alpha} (z^{2-\alpha}+1)$, for $z \in [0, \infty)$. Moreover, Lemma \ref{lemma6} \ref{PropB3} implies that for any $t>0$, there exists a constant $C_{t}>0$ such that $v_{s} \leq C_{t} s^{-1}$ for $s \in [0,t]$. Therefore, for any $\varepsilon >0$, there exists a constant $C_{\alpha, \varepsilon,t} >0$ such that for all $s \in [0,t]$ we have that $\widehat{V}_{\varepsilon}(z_0, s) \leq C_{\alpha, \varepsilon,t} (s^{-2+\alpha}+1)$.  This implies \eqref{eq1TWO} (recall that $\alpha \in (1,2]$ in \eqref{Assump3TWO}). 
\end{proof}

Note that the function $\widehat{V}_{\varepsilon}$ in Lemma \ref{lemma1TWO} corresponds to $\hat{V}_{F, \varepsilon}$ in Assumption 4.3 of \cite{Bansaye2019}.

\begin{remark} \label{remark3TWO}
If \eqref{Assump3TWO} holds with $\alpha = 2$, then $\boldsymbol{\sigma}^{2}_{{\rm int}} <\infty$. Since we have already assumed that $\mathbf{m}_{\rm br}<\infty$, it follows from \eqref{eq2TWO}, \eqref{eq3TWO} and \eqref{eq4TWO} (with $\alpha = 2$)  that the function $V$ in \eqref{eq5TWO} is bounded. In particular, the function $\widehat{V}_{ \varepsilon}$ is also bounded, which implies \eqref{eq1TWO}.

Furthermore, the proof shows that the result of Lemma \ref{lemma1TWO} holds true even without the condition that $\mathbf{m}_{\rm int}<0$, as long as \eqref{Assump3TWO} is satisfied.
\end{remark}

We have now all the ingredients to prove Theorem \ref{Main1}.

\begin{proof}[Proof of Theorem \ref{Main1}]
It follows by applying Proposition 4.4 (i) and Theorem 4.5 (i) in \cite{Bansaye2019} (with $a^{\prime} =-1$, $a= z_0$ and $F(z) \coloneqq \log(1+z)$, for $z \in (-1, \infty)$), Lemma \ref{lemma2} and Lemma 3 in Chapter 3, Section 16 of \cite{Billingsley1999} that the family $(Z^{(z)})_{z \in \mathbb{N}_{0}}$ of BPI process converges weakly, in $\mathbb{D}(\mathbb{R}_{+}, \overline{\mathbb{R}})$, as $z \rightarrow \infty$, towards $Z^{(\infty)} = (Z^{(\infty)}_{t})_{t \geq 0}$. Moreover,
\begin{align} \label{Pro2}
\mathbb{P}\Big(Z^{(\infty)}_{t} < \infty \, \, \text{for all} \, \, t >0\Big) = 1 \qquad \text{and} \qquad \lim_{t \downarrow 0}  \frac{Z_{t}^{(\infty)}}{v_{t}} = 1, \quad \mathbb{P}-\text{almost surely}.
\end{align}
\noindent The condition in Proposition 4.4 of \cite{Bansaye2019} is verified by using Lemma \ref{lemma4} (Recall that, under  the assumptions that $\mathbf{m}_{\rm int}<0$ and $\mathbf{m}_{\rm br}<\infty$, the BPI process does not explode in finite time, see Example 2.1 in \cite{Berzunza2020}). The conditions in Theorem 4.5 of \cite{Bansaye2019} are verified by using Lemma \ref{lemma1}, Lemma \ref{lemma1TWO} and Lemma \ref{lemma2}. Finally, \eqref{Pro2} and 
Lemma \ref{lemma6} imply that $\lim_{t \downarrow 0}  t Z_{t}^{(\infty)} = (-\mathbf{m}_{\rm int})^{-1}$, $\mathbb{P}$-almost surely, which concludes our proof. 
\end{proof}

\begin{remark}
At the time of writing, we noticed a minor error in the proof of Proposition 2.2 in \cite{Bansaye2019}. This proposition is used to prove Theorem 3.2 in \cite{Bansaye2019}, which in turn is used to deduce Lemma 4.6 in \cite{Bansaye2019}. The latter is subsequently employed in the proofs of Proposition 4.4 and Theorem 4.5 in \cite{Bansaye2019}.

The error in the proof of Proposition 2.2 in \cite{Bansaye2019} lies in the set inequality for $B_{\eta}$ on page 2383. Specifically, the indicator $\mathbf{1}_{\{ S_{s-} \leq \varepsilon \}}$ is incorrectly placed inside the integrals. To match the expression at the top of page 2383, it should remain outside the integrals as $\mathbf{1}_{\{ S_{t-} \leq \varepsilon \}}$. While moving the indicator inside the integral is generally incorrect, first applying Markov's inequality yields the correct bound in (6). The absolute values resulting from the application of Markov's inequality then justify moving the indicator back inside. Consequently, the statement of Proposition 2.2 in \cite{Bansaye2019} remains correct, and the remainder of the proof stands.
\end{remark}

\subsection{Proof of Theorem \ref{PropC2}} \label{ProofofPropC2}

In this section, we prove Theorem \ref{PropC2}. The key ingredient is an improvement of the estimate obtained in Lemma 4.6 of \cite{Bansaye2019} (see also Theorem 3.2 of \cite{Bansaye2019}). To achieve this, we use a supermartingale inequality (see Lemma \ref{lemmaA1} in Appendix \ref{Apendice}) similar to that used in the proof of Theorem 2 of \cite{Bere2010}.

\begin{lemma} \label{lemma7}
Suppose that \eqref{Assump4} holds. For $\theta >1$, $z \in \mathbb{N}_{0}$ and $p\in \mathbb{N}$ such that $p\geq 2$, we have that
\begin{align} \label{Ineq1fact}
\int_{\mathcal{X}} \Big|\log \Big(1+\frac{H(z, u, y, x)}{z+1} \Big) \Big|^{p} q({\rm d} u, {\rm d} y, {\rm d} x) \leq \frac{p!}{(\ln \theta)^{p}} \Big(d \theta+c\theta+\sum_{i \geq 1} \theta^{i} \pi_{i}+\sum_{i \geq 1}\theta^{i}b_{i}\Big)
\end{align}
\noindent and 
\begin{align} \label{Ineq2fact}
\int_{\mathcal{X}} \Big|\log \Big(1+\frac{H(z, u, y, x)}{z+1} \Big) \Big|^{2p} q({\rm d} u, {\rm d} y, {\rm d} x) \leq \frac{2^{p} p!}{(\ln \theta)^{p}} \Big(d \theta+c\theta+\sum_{i \geq 1} \theta^{i} \pi_{i}+\sum_{i \geq 1}\theta^{i}b_{i}\Big).
\end{align}
\end{lemma}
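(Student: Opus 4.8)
The plan is to reduce both inequalities to a single elementary bound on $|\log(1+x/(z+1))|^{q}$ integrated against $\nu_{\rm br}$ and $\nu_{\rm int}$, and then to sum the resulting geometric-type series. First I would record the decomposition, valid for $z\in\mathbb{N}$ by \eqref{InteMeasu} and \eqref{funcH},
\begin{align}
\int_{\mathcal{X}} \Big|\log\Big(1+\tfrac{H(z,u,y,x)}{z+1}\Big)\Big|^{q} q({\rm d}u,{\rm d}y,{\rm d}x)
= z\int_{\mathbb{N}_{-1}}\Big|\log\Big(1+\tfrac{x}{z+1}\Big)\Big|^{q}\nu_{\rm br}({\rm d}x)
+ \tfrac{z(z-1)}{2}\int_{\mathbb{N}_{-1}}\Big|\log\Big(1+\tfrac{x}{z+1}\Big)\Big|^{q}\nu_{\rm int}({\rm d}x),
\end{align}
with the $z=0$ case being trivial since $H(0,\cdot)=0$. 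So everything comes down to bounding, for $i\ge 1$, the terms $z\,|\log(1+\tfrac{i}{z+1})|^{q}$ and $z(z-1)|\log(1+\tfrac{i}{z+1})|^{q}$, together with the $x=-1$ contributions $z\,d\,|\log(1+\tfrac{-1}{z+1})|^{q}$ and $z(z-1)c\,|\log(1+\tfrac{-1}{z+1})|^{q}$ (the factor $2$ in $\nu_{\rm int}$ cancels against the $\tfrac12$).

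The key elementary estimate I would prove and use is: for $\theta>1$, $r>0$ and any integer $q\ge 1$,
\begin{align}
r\,\Big(\log\Big(1+\tfrac{i}{r}\Big)\Big)^{q}\le r\,\Big(\log\Big(1+\tfrac{i}{r}\Big)\Big)^{q}\le \frac{q!}{(\ln\theta)^{q}}\,\theta^{i},
\end{align}
the point being that $\log(1+i/r)\le i/r$ so $r(\log(1+i/r))^q\le i^q/r^{q-1}$, and one checks $\sup_{r>0} i^{q}/r^{q-1}$ is not what is wanted — rather one should argue directly that the function $r\mapsto r(\log(1+i/r))^{q}$ is increasing in $r$ with limit $\lim_{r\to\infty} r(\log(1+i/r))^{q}=0$ for $q\ge 2$ and $=i$ for $q=1$; in fact for $q\ge1$ the clean route is the bound $\ell^{q}\le q!\,e^{\ell}/(\text{something})$, i.e. $x^{q}\le q!\,(\ln\theta)^{-q}\,\theta^{x}$ for all $x\ge0$ (Taylor expansion of $\theta^{x}=e^{x\ln\theta}$), applied with $x=(z+1)\log(1+\tfrac{i}{z+1})\le i$ after writing $z(\log(1+\tfrac{i}{z+1}))^q\le (z+1)^{1-q}\big((z+1)\log(1+\tfrac{i}{z+1})\big)^{q}\le \big((z+1)\log(1+\tfrac{i}{z+1})\big)^{q}\le i^q$ for $q\ge1$, and then $i^q\le q!(\ln\theta)^{-q}\theta^i$. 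The same chain handles the $z(z-1)$ prefactor since $z(z-1)\le (z+1)^2\le (z+1)^{q}$ for $q\ge 2$, giving $z(z-1)(\log(1+\tfrac{i}{z+1}))^{q}\le i^{q}\le q!(\ln\theta)^{-q}\theta^{i}$. For the $x=-1$ (death) terms, since $0\le -\log(1+\tfrac{-1}{z+1})=-\log(\tfrac{z}{z+1})=\log(1+\tfrac1z)\le \tfrac1z$, the same estimates give $z(\log(1+\tfrac1z))^q\le 1\le \theta$ and $z(z-1)(\log(1+\tfrac1z))^q\le 1\le \theta$, which is where the $d\theta$ and $c\theta$ terms come from. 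Summing over $i\ge1$ against $\pi_i$ and $b_i$ then yields \eqref{Ineq1fact}.

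For \eqref{Ineq2fact} I would simply apply \eqref{Ineq1fact} after noting $|\log(1+\tfrac{x}{z+1})|^{2p}=(|\log(1+\tfrac{x}{z+1})|^{2})^{p}$ together with the inequality $\log(1+y)\le\sqrt{2y}$ for $y\ge0$ (already used in the proof of Lemma \ref{lemma1TWO}), so that $|\log(1+\tfrac{x}{z+1})|^{2}\le \tfrac{2|x|}{z+1}$ when $x\ge0$ and $\le \tfrac2z$ when $x=-1$; more precisely, one reduces $\int|\log(\cdot)|^{2p}$ to the same geometric series with an extra factor $2^{p}$ coming from $(\log(1+\tfrac{x}{z+1})^{2})^{p}\le$ the $p$-th power of the quadratic bound, and then reuses the $x^{p}\le p!(\ln\theta)^{-p}\theta^{x}$ step with exponent $p$ instead of $2p$. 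Concretely: $z(z-1)(\log(1+\tfrac{i}{z+1})^2)^{p}\le z(z-1)(\tfrac{2i}{z+1})^{p}\cdot$ — no, cleaner to write $z(z-1)(\log(1+\tfrac{i}{z+1}))^{2p}\le (z+1)^{2}(\log(1+\tfrac{i}{z+1}))^{2p}$ and bound $(z+1)^{2/p}\log(1+\tfrac{i}{z+1})\cdot(\cdots)$; the honest path is $((z+1)\log(1+\tfrac{i}{z+1}))^{p}\le i^{p}$ is false for the square, so instead use $\log(1+\tfrac{i}{z+1})\le\sqrt{2}\,(\tfrac{i}{z+1})^{1/2}$ giving $(\log(1+\tfrac{i}{z+1}))^{2p}\le 2^{p}(\tfrac{i}{z+1})^{p}$ hence $z(z-1)(\log(1+\tfrac{i}{z+1}))^{2p}\le 2^{p}(z+1)^{2-p}i^{p}\le 2^{p}i^{p}$ (valid for $p\ge2$), and then $i^{p}\le p!(\ln\theta)^{-p}\theta^{i}$; summing gives \eqref{Ineq2fact}, and the death terms are handled identically with $i$ replaced by $1$, producing $d\theta+c\theta$. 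The main obstacle, and the only place care is genuinely needed, is bookkeeping the powers of $(z+1)$ against $z(z-1)$ so that the prefactors are absorbed uniformly in $z$ for all $p\ge 2$ (this is exactly where $p\ge 2$ is used), and making sure the constant $2^{p}$, not $2^{2p}$, appears in \eqref{Ineq2fact}; everything else is summation of the convergent series $\sum_i\theta^i\pi_i$ and $\sum_i\theta^i b_i$ guaranteed finite by \eqref{Assump4}.
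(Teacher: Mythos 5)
Your proof is correct and follows essentially the same route as the paper: the same decomposition of the integral into branching and interaction pieces via \eqref{InteMeasu}--\eqref{funcH}, the same two elementary log-bounds ($\log(1+x)\le x$ for \eqref{Ineq1fact} and $\log(1+x)\le\sqrt{2x}$ for \eqref{Ineq2fact}), and the same exponential-moment inequality $x^{p}\le p!\,(\ln\theta)^{-p}\theta^{x}$ for $x\in\mathbb{N}$ (the Taylor-expansion argument for $\theta^x=e^{x\ln\theta}$). The only distinctions are cosmetic: the paper first collapses the prefactors to $d+\sum_i i^p\pi_i$ and $c+\sum_i i^p b_i$ and then applies the exponential bound, whereas you fold the two steps together; and your write-up contains several abandoned false starts before arriving at the right chain of inequalities. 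One small point to tighten: for the $x=-1$ terms you write ``$\le 1\le\theta$'', but what you actually need (and what the target expression requires) is $1=1^p\le p!(\ln\theta)^{-p}\theta^{1}$, so that the $d$ and $c$ terms pick up the same $p!(\ln\theta)^{-p}\theta$ factor as the series; this holds, but should be stated as an instance of the same $x^p\le p!(\ln\theta)^{-p}\theta^x$ bound rather than the weaker $1\le\theta$. Likewise your identification of where $p\ge 2$ is used (absorbing $z(z-1)\le(z+1)^p$) is exactly the point the paper relies on implicitly.
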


\begin{proof}
If $z=0$, then $H(0, u, y, x)=0$, for $u=1,2$, $y \in \mathbb{N} \cup \Delta$ and $x \in \mathbb{N}_{-1}$ and thus, \eqref{Ineq1fact} is satisfied. By \eqref{InteMeasu} and \eqref{funcH}, 
\begin{align} \label{eq12}
\int_{\mathcal{X}} \Big|\log \Big(1+\frac{H(z, u, y, x)}{z+1} \Big) \Big|^{p} q({\rm d} u, {\rm d} y, {\rm d} x) & = z \int_{\mathbb{N}_{-1}} \Big|\log \Big(1+ \frac{x}{z+1} \Big) \Big|^{p} \nu_{\rm br}({\rm d} x) \nonumber \\
& \quad \quad  \quad + \frac{z(z-1)}{2} \int_{\mathbb{N}_{-1}} \Big|\log \Big( 1+\frac{x}{z+1} \Big)\Big|^{p} \nu_{\rm int}({\rm d} x).
\end{align}
\noindent On the one hand, it follows from \eqref{Measuresbrint} and \eqref{logIne1} that, for $z \in \mathbb{N}$, 
\begin{align}  \label{eq12ab}
z \int_{\mathbb{N}_{-1}} \Big| \log \Big(1+ \frac{x}{z+1} \Big) \Big|^{p} \nu_{\rm br}({\rm d} x)  & = zd \Big(\log \Big(1+ \frac{1}{z} \Big) \Big)^{p} + z\sum_{i \geq 1} \pi_{i} \Big(\log \Big(1+ \frac{i}{z+1} \Big) \Big)^{p} \nonumber \\ 
& \leq d + \sum_{i \geq 1} i^{p} \pi_{i}.
\end{align}
\noindent On the other hand, it follows from \eqref{Measuresbrint} and \eqref{logIne1} that, for $z \in \mathbb{N}$,
\begin{align}  \label{eq12ac}
\frac{z(z-1)}{2} \int_{\mathbb{N}_{-1}} \Big|\log \Big(1+ \frac{x}{z+1} \Big) \Big|^{p} \nu_{\rm int}({\rm d} x) & = z(z-1)c \Big(\log \Big(1+ \frac{1}{z} \Big) \Big)^{p} + z(z-1)\sum_{i \geq 1} b_{i} \Big(\log \Big(1+ \frac{i}{z+1} \Big) \Big)^{p} \nonumber \\ 
& \leq   c + \sum_{i \geq 1} i^{p} b_{i}.
\end{align}
\noindent Note that, for $\theta >1$, we have that $x^{p}\leq \frac{p!}{(\ln \theta)^{p}} \theta^{x}$, for $x \in \mathbb{N}$. Then \eqref{Ineq1fact} follows from \eqref{Assump4}, \eqref{eq12}, \eqref{eq12ab}, \eqref{eq12ac} and the above inequality. 

The proof of \eqref{Ineq2fact} is similar, but uses the inequality $\log(1+x) \leq \sqrt{2x}$, for $x \geq 0$, instead of \eqref{logIne1}. 
\end{proof}

For the next result, we assume \eqref{Assump4}, take  $\kappa \geq 0$ as in Lemma \ref{lemma1} \ref{Prop4}, $\theta >1$ and set
\begin{align} 
\eta_{\theta} \coloneqq  \frac{(\ln \theta) \wedge (\ln \theta)^{2}}{24(d\theta + c \theta + \sum_{i \geq 1} \theta^{i} \pi_{i}+ \sum_{i \geq 1} \theta^{i} b_{i})}.
\end{align}
\begin{lemma} \label{lemma3}
Suppose that $\mathbf{m}_{\rm int}<0$ and \eqref{Assump4} hold. Then, for any $\varepsilon >1$ and $t < T(\infty) \wedge \varepsilon/4\kappa \wedge \eta_{\theta}$,
\begin{align}
\mathbb{P}\left( \sup_{u \leq t}  \Big| \log \Big(  \frac{Z_{u}^{(\infty)}+1}{v_{u}+1} \Big) \Big| \geq \varepsilon \right) \leq 3e^{-\frac{\varepsilon \ln \theta}{144}}.
\end{align}
\end{lemma}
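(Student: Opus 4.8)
The plan is to control the supremum of $|\log((Z^{(\infty)}_u+1)/(v_u+1))|$ by writing the logarithm of $Z^{(\infty)}+1$ via It\^o's formula applied to the SDE \eqref{SDE}, comparing the resulting drift with the flow equation $\tfrac{\rm d}{\rm d t} v_t = \varphi(v_t)$, and then applying the exponential supermartingale inequality of Lemma~\ref{lemmaA1}. Concretely, set $F(z)=\log(1+z)$ and $R_u \coloneqq F(Z^{(\infty)}_u) - F(v_u) = \log\bigl((Z^{(\infty)}_u+1)/(v_u+1)\bigr)$. By It\^o's formula for the jump SDE, $F(Z^{(\infty)}_u)$ decomposes as an initial value plus a drift term $\int_0^u h(Z^{(\infty)}_s)\,{\rm d}s$ (with $h$ as in Lemma~\ref{lemma1}\ref{Prop2}, so that $(z+1)h(z)=\varphi(z)$) plus a purely discontinuous martingale $M_u$ whose jumps are $\log\bigl(1+H(Z^{(\infty)}_{s-},u,y,x)/(Z^{(\infty)}_{s-}+1)\bigr)$. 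Subtracting $F(v_u)=F(v_0)+\int_0^u \varphi(v_s)/(v_s+1)\,{\rm d}s$, we get $R_u = R_0 + \int_0^u\bigl(h(Z^{(\infty)}_s)-\varphi(v_s)/(v_s+1)\bigr){\rm d}s + M_u$; but at time $0$ both processes start from $\infty$ so $R_0=0$ (this is where Theorem~\ref{Main1}'s identification of the limit as a BPI process from infinity, together with \eqref{Pro2}, is needed, or one works from a small time $\delta$ and lets $\delta\downarrow0$).

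Next I would exploit monotonicity of the drift to turn $R$ into a semimartingale whose drift has a sign controlled by $\kappa$. Write $h(z)=\psi(\log(1+z))\,e^{\log(1+z)}/(1+z)\cdot$ --- more precisely, recall from Lemma~\ref{lemma1}\ref{Prop4} that $\psi(w)\coloneqq e^{-w}\varphi(e^w-1)$ is $(0,\kappa)$-nonexpansive on $(1,\infty)$, i.e. $\psi(w_1)\le\psi(w_2)+\kappa$ for $w_1>w_2>0$. Since $h(z)=e^{-F(z)}\varphi(z)=\psi(F(z))$ and $\varphi(v_s)/(v_s+1)=h(v_s)=\psi(F(v_s))$, the drift of $R_u$ is $\int_0^u\bigl(\psi(F(Z^{(\infty)}_s))-\psi(F(v_s))\bigr){\rm d}s$. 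On the event where $R$ is large and positive, $F(Z^{(\infty)}_s)>F(v_s)$, so the nonexpansivity bound gives a drift at most $\kappa u$; similarly when $R$ is large and negative the drift is at least $-\kappa u$. Thus on $[0,t]$ with $t<\varepsilon/4\kappa$, the drift contributes at most $\varepsilon/4$ in absolute value, and the problem reduces to bounding $\sup_{u\le t}|M_u + (\text{drift})|\ge\varepsilon$ by controlling the martingale $M$.

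For the martingale part I would apply the exponential/supermartingale inequality (Lemma~\ref{lemmaA1}): the predictable quadratic-variation-type quantity and the higher moments of the jumps of $M$ are exactly the integrals $\int_{\mathcal X}|\log(1+H/(z+1))|^p\,q({\rm d}u,{\rm d}y,{\rm d}x)$ estimated in Lemma~\ref{lemma7}. Those bounds, of the form $p!/(\ln\theta)^p\cdot C_\theta$ with $C_\theta=d\theta+c\theta+\sum\theta^i\pi_i+\sum\theta^i b_i$, show that $M$ has exponential moments: $\mathbb{E}[\exp(\lambda M_t)]$ is finite for $\lambda<\ln\theta$ and the log-Laplace transform is controlled, uniformly in the state, by something like $C_\theta\lambda^2/(\ln\theta-\lambda)\cdot t$ (using $\sum_{p\ge2}\lambda^p\,\mathbb{E}[\Delta M]^p/p!$ term-by-term with Lemma~\ref{lemma7}). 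Feeding this into the Doob-type maximal inequality for the exponential supermartingale $\exp(\lambda M_u - (\text{compensator}))$, optimizing over $\lambda$ near $\ln\theta/2$, and using $t<\eta_\theta$ to make the compensator contribution at scale $\varepsilon$ negligible (this is precisely the role of $\eta_\theta$ and of the constants $32\sqrt2$, $64\sqrt2$), yields $\mathbb{P}(\sup_{u\le t}|M_u|\ge\varepsilon/2)\le e^{-\varepsilon\ln\theta/(64\sqrt2)}$, and combining with the drift bound gives the claim.

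The main obstacle I anticipate is twofold: first, making rigorous the step $R_0=0$ at the infinite starting point --- one must either pass to the limit in $z$ using the weak convergence of $Z^{(z)}$ from Theorem~\ref{Main1} together with uniform-in-$z$ versions of all the estimates, or run the argument on $[\delta,t]$ starting from $Z^{(\infty)}_\delta$ and control $|R_\delta|\to0$ as $\delta\downarrow0$ using $Z^{(\infty)}_t/v_t\to1$ from \eqref{Pro2}; keeping the constants clean through this limiting procedure requires care. Second, calibrating the numerology: the exact thresholds $\varepsilon/4\kappa$, $\eta_\theta$ and the exponent $\varepsilon\ln\theta/(64\sqrt2)$ must be matched to the specific form of the supermartingale inequality in Lemma~\ref{lemmaA1}, so the bookkeeping in the exponential-moment estimate (choice of $\lambda$, the factor $\sqrt2$ coming from $\log(1+x)\le\sqrt{2x}$ used in \eqref{Ineq2fact}, and the geometric-series bound on the compensator) is the delicate technical heart of the proof.
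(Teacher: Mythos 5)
Your overall framework (It\^o's formula on $\log(1+Z^{(z)})$, comparison with the flow, nonexpansivity of $\psi$, Lemma~\ref{lemma7} for the jump moments, Lemma~\ref{lemmaA1} for the exponential supermartingale bound, and a limiting argument to reach the infinite starting point) coincides with the paper's, and your treatment of the $z\to\infty$ step (prove the bound for finite $z$ against the flow $\phi(z,\cdot)$, then use weak convergence of $Z^{(z)}$ and uniform convergence $\phi(z,\cdot)\to v_\cdot$, then let the lower time go to $0$) is essentially what the paper does. However, there is a concrete gap in your drift-bounding step.

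You claim that the drift of $R_u=\log\bigl((Z^{(\infty)}_u+1)/(v_u+1)\bigr)$ contributes at most $\varepsilon/4$ in absolute value on $[0,t]$ because $\psi$ is $(0,\kappa)$-nonexpansive. This does not follow. Nonexpansivity only says $\psi(w_1)\le\psi(w_2)+\kappa$ for $w_1>w_2$; it gives a one-sided bound on $\psi(F(Z_s))-\psi(F(v_s))$ whose sign depends on $\operatorname{sgn}(R_s)$. When $R_s>0$ the drift is $\le\kappa$, but it can be arbitrarily negative; when $R_s<0$ the drift is $\ge-\kappa$, but it can be arbitrarily positive. So $\int_0^u(\psi(F(Z_s))-\psi(F(v_s)))\,{\rm d}s$ is \emph{not} bounded in absolute value by $\kappa u$ along an oscillating path, and the naive decomposition $|R_u|\le|\text{drift}|+|M_u|$ breaks down. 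What is controlled is the \emph{stabilising} (quadratic) quantity: $R_s\cdot\bigl(\psi(F(Z_s))-\psi(F(v_s))\bigr)\le\kappa|R_s|$ in \emph{both} sign cases, which is why the paper invokes Lemma~2.1 of \cite{Bansaye2019}. That lemma performs a pathwise comparison at the level of $\bigl(\log(1+Z_s)-y_s\bigr)^2$ and yields the three-term union bound in \eqref{eq9}: a drift-against-displacement integral ${\rm d}A_s$ (which vanishes on the good event), a martingale-against-displacement integral ${\rm d}M^{\rm d}_s$, and a sum of squared jumps $\sum|\Delta\log(1+Z_u)|^2$. Both of the latter are then handled by two separate applications of Lemma~\ref{lemmaA1}, with the moment bounds \eqref{Ineq1fact} and \eqref{Ineq2fact} from Lemma~\ref{lemma7}. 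Your single application of Lemma~\ref{lemmaA1} to $M$ directly does not account for the squared-jump term, and without the quadratic comparison the thresholds $\varepsilon/4\kappa$, $\eta_\theta$ and the exponent $\varepsilon\ln\theta/(64\sqrt2)$ cannot be derived. To repair the argument, replace the linear drift estimate by Bansaye's quadratic comparison lemma (or reproduce its proof), then split the resulting martingale contribution into the stochastic-integral part and the predictable-bracket/jump-squared part and apply Lemma~\ref{lemmaA1} to each.
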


\begin{proof} Let $d_{\log}(x,y):= |\log(1+x) - \log(1+y)|$, for $x,y \in [0, \infty)$ and for $\varepsilon >1$, $z \in \mathbb{N}_{0} \cap D$, we introduce
\begin{align}
T_{D, \varepsilon}(z): = \sup \left\{ t \in [0, T(z)): \forall \,  s \leq t, \, \overline{B}_{d_{\log}}(\phi(z, s), \varepsilon) \cap \mathbb{N}_{0} \subset D \right\}.
\end{align}
Suppose that we have proved that for any $\varepsilon >1$, $z \in \mathbb{N}_{0} \cap D$ and $t < T_{D, \varepsilon}(z) \wedge \varepsilon/4\kappa \wedge \eta_{\theta}$, 
\begin{align} \label{eq7}
\mathbb{P}\left( \sup_{u \leq t} \Big| \log \Big(  \frac{Z_{u}^{(z)}+1}{\phi(z, u)+1} \Big) \Big| \geq \varepsilon \right) \leq 3e^{-\frac{\varepsilon \ln \theta}{144}}.
\end{align}
Then, let $T_{0} \in (0,T(\infty) \wedge \varepsilon/4\kappa \wedge \eta_{\theta})$ such that $\overline{B}_{d_{\log}}(\phi(\infty, T_{0}), 2 \varepsilon) \subset D$. Observe that $T_{D, \varepsilon}(z) \geq T_{0}$ for $z$ large enough since $\phi(z, T_{0}) \uparrow v_{T_{0}}$, as $z\uparrow \infty$, and $u \in [0, T(z)) \rightarrow \phi(z, u)$ decreases. Thus, for any $t <T_{0}$,
\begin{align} \label{eq8}
\limsup_{z \rightarrow \infty, z \in \mathbb{N}_{0}} \mathbb{P} \Big( \sup_{u \leq t}  \Big| \log \Big(  \frac{Z_{u}^{(z)}+1}{\phi(z, u)+1} \Big) \Big|\geq \varepsilon \Big) \leq 3e^{-\frac{\varepsilon \ln \theta}{144}}. 
\end{align}
Now, we  also  fix $t_{0} \in (0, T_{0})$. By Lemma \ref{lemma2}, the flow $\phi$ comes down from infinity instantaneously, so $v_{u} = \phi(\infty,u) <\infty$ on $[t_{0}, t]$. By Dini's theorem, $\phi(z, \cdot)$ converges to $v_{\cdot}$ uniformly on $[t_{0}, t]$,  as $z\uparrow \infty$, using the monotonicity of the convergence and the continuity of the limit. We obtain from \eqref{eq8} that for any $t <T_{0}$,
\begin{align}
\limsup_{z \rightarrow \infty, z \in \mathbb{N}_{0}} \mathbb{P} \Big( \sup_{t_{0} \leq u \leq t} \Big| \log \Big(  \frac{Z_{u}^{(z)}+1}{v_{u}+1} \Big) \Big| \geq 2 \varepsilon \Big) \leq 3e^{-\frac{\varepsilon \ln \theta}{144}}
\end{align}
\noindent and the weak convergence of $(Z^{(z)})_{z \in \mathbb{N}_{0}}$ to $Z^{(\infty)}$ (Theorem \ref{Main1}) yields
\begin{align}
\mathbb{P} \Big( \sup_{t_{0} \leq u \leq t} \Big| \log \Big(  \frac{Z_{u}^{(\infty)}+1}{v_{u}+1} \Big) \Big| \geq 2 \varepsilon \Big) \leq 3e^{-\frac{\varepsilon \ln \theta}{144}}. 
\end{align}
\noindent Therefore, our claim follows by letting $t_{0} \downarrow 0$. 

It only remains to prove \eqref{eq7}. Consider the function $F: (-1, \infty) \rightarrow \mathbb{R}$ given by $F(x) = \log(1+x)$. The function $F$ can be extended in such a way that $F \in C^{2}(\mathbb{R}, \mathbb{R})$. Indeed, we may consider $\varphi F$, where $\varphi \in C^{\infty}(\mathbb{R}, \mathbb{R})$ is equal to $0$ on $(-\infty, -1]$ and to $1$ on $\overline{D \cup \mathbb{N}_{0}}$, thanks to the smooth Urysohn lemma since the aforementioned  sets are disjoint and closed. In other words, we may extend $F$ from $\overline{D \cup \mathbb{N}_{0}}$ to $\mathbb{R}$ such that $F \in C^{2}(\mathbb{R}, \mathbb{R})$. Then, by applying It\^o's formula (Theorem 5.1 in Chapter 2 of \cite{Ikeda1981}) and Proposition \ref{Pro1}, for $t \geq 0$, 
\begin{align}
\log(1+Z_{t}^{(z)}) = \log(1+z) + \int_{0}^{t} \int_{\mathcal{X}}  \log\Big( 1+ \frac{H(Z_{s-}^{(z)}, u, y, x)}{Z_{s-}^{(z)}+1} \Big) N({\rm d} s, {\rm d} u, {\rm d} y, {\rm d} x).
\end{align}
Next, let us consider  the functions $h$ and $\psi$ as in Lemma \ref{lemma1}. Then, the process $(\log(1+Z_{t}^{(z)}))_{t \geq 0}$ is a semimartingale that takes values in $ \{\log(1), \log(2), \log(3), \dots\}$ and can be written as
\begin{align}
\log(1+Z_{t}^{(z)}) = \log(1+z)+ \int_{0}^{t} \hat{\psi}(\log(1+Z_{s}^{(z)})) {\rm d} s + A_{t} + M^{\rm d}_{t}, \quad t \geq 0,
\end{align}
\noindent where $\hat{\psi}(y) := \psi(y) \mathbf{1}_{ \{ y\in \log(1+D) \}}$, for $y \in  \mathbb{R}$ and $\log(1+D) :=\{ \log(1+x): x \in D\}$, is a Borel locally bounded function by Lemma \ref{lemma1}, 
\begin{align} \label{eq43}
A_{t} := \int_{0}^{t} h(Z_{s}^{(z)})\mathbf{1}_{\{\log(1+Z_{s}^{(z)}) \not \in \log(1+D) \}} {\rm d} s 
\end{align}
\noindent is a continuous adapted process with a.s.\ bounded variations paths and
\begin{align} \label{eq44}
M^{\rm d}_{t} := \int_{0}^{t} \int_{\mathcal{X}}  \log\Big( 1+ \frac{H(Z_{s-}^{(z)}, u, y, x)}{Z_{s-}^{(z)}+1} \Big)  \tilde{N}({\rm d} s, {\rm d} u, {\rm d} y, {\rm d} x)
\end{align}
\noindent is a c\`adl\`ag local martingale. Indeed, by \eqref{Ineq1fact} in Lemma \ref{lemma7}, $(M^{\rm d}_{t})_{t \geq 0}$ is a locally square-integrable martingale. 

Note that the dynamical system $y_{t} = \log(1+\phi(z,t))$ satisfies for $t < T(z)$,
\begin{align}
y_{0} = \log(1+z), \quad y_{t}^{\prime} = \hat{\psi}(y_{t}) = \psi(y_{t}),
\end{align}
\noindent since $\psi = \hat{\psi}$ on $\log(1+D)$. This flow is thus associated with the vector field $\hat{\psi}$ which is locally Lipschitz on $\log(1+D)$ by Lemma \ref{lemma1}. Set $T^{\prime}(y_{0}) = T(z)$, and for $\varepsilon>0$, let 
\begin{align}
T_{\log(1+D), \varepsilon}(y_{0}) \coloneqq \sup\left\{t \in [0, T^{\prime}(y_{0})): \, \forall s \leq t, \, \bar{B}(y_{s},\varepsilon) \cap \log(1+\mathbb{N}_{0}) \subset \log(1+D)  \right\}
\end{align}
\noindent be the last time when $y_{t}$ starting from $y_{0}$ is at distance $\varepsilon >0$ from the boundary of $\log(1+D)$ under the Euclidean distance. Note that $T_{\log(1+D), \varepsilon}(y_{0})  \geq T_{D, \varepsilon}(z)$ and that $\hat{\psi}$ is $(0, \alpha)$ nonexpansive on $\log(1+D)$ by Lemma \ref{lemma1}. By applying Lemma 2.1 in \cite{Bansaye2019} to $(\log(1+Z_{t}^{(z)}))_{t \geq 0}$ with $\log(1+Z_{0}^{(z)}) = y_{0}=\log(1+z)$, we obtain that, for any $t< T_{D, \varepsilon}(z) \wedge \varepsilon/4\kappa \wedge \eta_{\theta}$, 
\begin{align} \label{eq42}
\mathbb{P} \left( S_{t} \geq \varepsilon  \right) \leq \mathbb{P} \left( \sup_{u \leq t} \tilde{R}_{u}^{\varepsilon} \geq \frac{\varepsilon^{2}}{2}  \right),
\end{align}
\noindent where for $u \leq t$,
\begin{align} \label{eq41}
S_{u} := \sup_{r \leq u} d_{\log}(Z_{r}^{(z)},\phi(z, r)) = \sup_{r \leq u} |\log(1+Z_{r}^{(z)}))-y_{r}|
\end{align}
\noindent and 
\begin{align} 
\tilde{R}_{u}^{\varepsilon} & := \mathbf{1}_{\{S_{u-} \leq \varepsilon \}} \Big( 2\int_{0}^{u}  (\log(1+Z_{s-}^{(z)})-y_{s}) {\rm d} (A_{s} + M^{\rm d}_{s}) + \sum_{s \leq u} |\Delta \log(1+Z_{s}^{(z)})|^{2} \Big).
\end{align}
Then, by \eqref{eq42} and the union bound, for any $t< T_{D, \varepsilon}(z) \wedge \varepsilon/4\kappa \wedge \eta_{\theta}$, 
\begin{align} \label{eq9}
\mathbb{P} \left( \sup_{u \leq t} \Big| \log \Big(  \frac{Z_{u}^{(z)}+1}{\phi(z, u)+1} \Big) \Big| \geq \varepsilon \right)  & \leq \mathbb{P} \left( \sup_{u \leq t}  \mathbf{1}_{\{S_{u-} \leq \varepsilon \}} \int_{0}^{u} (\log(1+Z_{s-}^{(z)})-y_{s}) {\rm d} A_{s} \geq \frac{\varepsilon^{2}}{12} \right)  \nonumber \\
& \quad \quad  + \mathbb{P} \left( \sup_{u \leq t}  \mathbf{1}_{\{S_{u-} \leq \varepsilon \}} \int_{0}^{u}  (\log(1+Z_{s-}^{(z)})-y_{s}) {\rm d} M^{\rm d}_{s} \geq \frac{\varepsilon^{2}}{12} \right)  \nonumber \\
& \quad \quad  + \mathbb{P} \left( \sum_{u \leq t} \mathbf{1}_{\{S_{u-} \leq \varepsilon \}} |\Delta \log(1+Z_{u}^{(z)})|^{2} \geq \frac{\varepsilon^{2}}{6} \right). 
\end{align} 

\noindent On the event $\{S_{u-} \leq \varepsilon \}$, we have that $\log(1+Z_{u-}^{(z)}) \in \log(1+D)$, for any $u \leq t <  T_{D, \varepsilon}(z) \wedge \varepsilon/4\kappa \wedge \eta_{\theta}$. So, by \eqref{eq43},
\begin{align} \label{eq5}
\mathbb{P} \left( \sup_{u \leq t} \mathbf{1}_{\{S_{u-} \leq \varepsilon \}} \int_{0}^{u}  (\log(1+Z_{s-}^{(z)})-y_{s}) {\rm d} A_{s} \geq \frac{\varepsilon^{2}}{12} \right) =0. 
\end{align}
\noindent Let
\begin{align}
J_{r}^{(\varepsilon)} & \coloneqq \int_{0}^{r} \mathbf{1}_{\{S_{s-} \leq \varepsilon \}} (\log(1+Z_{s-}^{(z)}))-y_{s}) {\rm d} M^{\rm d}_{s}, \quad \text{for}  \, \,  r \leq t. 
\end{align}
\noindent Note that $(J_{r}^{(\varepsilon)})_{r \in [0,t]}$ is a local martingale. Then, 
\begin{align} \label{eq49}
& \mathbb{P} \left( \sup_{u \leq t}  \mathbf{1}_{\{S_{u-} \leq \varepsilon \}} \int_{0}^{u}  (\log(1+Z_{s-}^{(z)})-y_{s}) {\rm d} M^{\rm d}_{s} \geq \frac{\varepsilon^{2}}{12} \right) \nonumber \\
& \qquad \qquad =  \mathbb{P} \left( \sup_{u \leq t}  \mathbf{1}_{\{S_{u-} \leq \varepsilon \}} J_{u}^{(\varepsilon)} \geq \frac{\varepsilon^{2}}{12}, S_{t} \leq \varepsilon \right) + \mathbb{P} \left( \sup_{u \leq t}  \mathbf{1}_{\{S_{u-} \leq \varepsilon \}} J_{u}^{(\varepsilon)} \geq \frac{\varepsilon^{2}}{12}, S_{t} > \varepsilon \right)
& \nonumber \\
& \qquad \qquad \leq  \mathbb{P} \left( \sup_{u \leq t}  J_{u}^{(\varepsilon)} \geq \frac{\varepsilon^{2}}{12}\right) + \mathbb{P} \left( \sup_{u \leq t}  \mathbf{1}_{\{S_{u-} \leq \varepsilon \}} J_{u}^{(\varepsilon)} \geq \frac{\varepsilon^{2}}{12}, S_{t} > \varepsilon \right).
\end{align} 
\noindent Then, by \eqref{eq9}, \eqref{eq5} and \eqref{eq49},
\begin{align} \label{eq50} 
\mathbb{P} \left( \sup_{u \leq t} \Big| \log \Big(  \frac{Z_{u}^{(z)}+1}{\phi(z, u)+1} \Big) \Big| \geq \varepsilon \right)  & \leq \mathbb{P} \left( \sup_{u \leq t}  J_{u}^{(\varepsilon)} \geq \frac{\varepsilon^{2}}{12}\right) + \mathbb{P} \left( \sup_{u \leq t}  \mathbf{1}_{\{S_{u-} \leq \varepsilon \}} J_{u}^{(\varepsilon)} \geq \frac{\varepsilon^{2}}{12}, S_{t} > \varepsilon \right) \nonumber \\
& \quad \quad  + \mathbb{P} \left( \sum_{u \leq t} \mathbf{1}_{\{S_{u-} \leq \varepsilon \}} |\Delta \log(1+Z_{u}^{(z)})|^{2} \geq \frac{\varepsilon^{2}}{6} \right). 
\end{align} 

Next, we bound each of the probabilities on the right-hand side of \eqref{eq50}.  We start with the first term. By the Burkholder-Davis-Gundy inequality and \eqref{eq41}, there exists a constant $C>0$ such that, for $r \leq t$,
\begin{align}
\mathbb{E}[(J_{r}^{(\varepsilon)})^{2}] \leq C \mathbb{E} \left[ \int_{0}^{r} \mathbf{1}_{\{S_{s-} \leq \varepsilon \}} (\log(1+Z_{s-}^{(z)}))-y_{s})^{2} {\rm d} [M^{\rm d}]_{s} \right]  \leq C \varepsilon^{2} \mathbb{E} \left[ [M^{\rm d}]_{r} \right], 
\end{align}
\noindent where $[M^{\rm d}]$ denotes the quadratic variation of $(M^{\rm d}_{s})_{s \geq 0}$. In particular, by \eqref{Ineq1fact} in Lemma \ref{lemma7},
\begin{align}
\mathbb{E} \left[ [M^{\rm d}]_{r} \right] = \mathbb{E} \left[\int_{0}^{r} \int_{\mathcal{X}} \Big(\log \Big(1+\frac{H(Z_{s-}^{(z)}, u, y, x)}{Z_{s-}^{(z)}+1} \Big) \Big)^{2} q({\rm d} u, {\rm d} y, {\rm d} x) {\rm d} s \right] < \infty.
\end{align}
\noindent Thus, $(J_{r}^{(\varepsilon)})_{r \in [0,t]}$ is a locally square-integrable martingale. On the other hand, for $p \geq 2$, we note by \eqref{Ineq1fact} in Lemma \ref{lemma7} that the (predictable) compensator $(V^{(p)}_{r})_{r \in [0,t]}$ of the process $(\sum_{s \leq r} |\Delta J_{s}^{(\varepsilon)}|^{p})_{r \in [0,t]}$ satisfies
\begin{align} \label{eq51}
V^{(p)}_{r} \leq \frac{p!}{2} \Big(\frac{\varepsilon}{\ln \theta} \Big)^{p} C_{\theta}r, \quad \text{for}  \, \,  r \leq t,
\end{align}
\noindent where 
\begin{equation}\label{ctheta}
C_{\theta} \coloneqq  2 \left(d\theta + c \theta + \sum_{i \geq 1} \theta^{i} \pi_{i}+\sum_{i \geq 1} \theta^{i} b_{i}\right).
\end{equation}
Then Lemma \ref{lemmaA1} implies that, for any $t< T_{D, \varepsilon}(z) \wedge \varepsilon/4\kappa \wedge \eta_{\theta}$, 
\begin{align} \label{eq10}
\mathbb{P}\left( \sup_{u \leq t}  J_{u}^{(\varepsilon)} \geq \frac{\varepsilon^{2}}{12} \right) & \leq \exp \left( - \frac{\varepsilon^{4}}{2(12)^{2} \left( \frac{\varepsilon^{3}}{12 \ln \theta} + \frac{\varepsilon^{2}C_{\theta}t}{(\ln \theta)^{2}} \right)   } \right) \leq \exp\left(-\frac{\varepsilon \ln \theta}{48}\right),
\end{align}
\noindent where we have used that $\varepsilon >1$ and $t < \eta_{\theta} = \frac{(\ln \theta) \wedge (\ln \theta)^{2}}{12 C_{\theta}}$. 

Next, we bound the second term on the right-hand side of \eqref{eq50}. Let $\sigma \coloneqq \inf \{ 0 < s \leq t: J_{s}^{(\varepsilon)} \geq \varepsilon^{2}/12 \}$. Note that $\sigma$ is a stopping time and that $\sigma \leq t$. For $0 < \lambda < (\ln \theta) / \varepsilon$, note that, by \eqref{eq51},
\begin{align} \label{eqA2}
L_{r}^{(\varepsilon)}  \coloneqq \sum_{p=2}^{\infty} \frac{\lambda^{p}}{p!} V^{(p)}_{r} \leq \frac{\lambda^{2} (\frac{\varepsilon}{\ln \theta})^{2} C_{\theta}t}{2(1-\frac{\lambda \varepsilon}{\ln \theta})}, \quad \text{for} \quad r \leq t. 
\end{align} 
\noindent By Lemma \ref{lemmaA1}, $(\exp(\lambda J_{r}^{(\varepsilon)}-L_{r}^{(\varepsilon)} ))_{r \in [0,t]}$ is a super-martingale. Consider the event 
\begin{align} \label{eq52} 
A \coloneqq \Big\{ \sup_{u \leq t}  \mathbf{1}_{\{S_{u-} \leq \varepsilon \}} J_{u}^{(\varepsilon)} \geq \frac{\varepsilon^{2}}{12}, S_{t} > \varepsilon \Big\}.
\end{align}
\noindent In particular, we have that
\begin{align}  \label{eq53}
\mathbb{E} \left[ \mathbf{1}_{A} \exp(\lambda J_{\sigma}^{(\varepsilon)}-L_{\sigma}^{(\varepsilon)}) \right] \leq 1.
\end{align}
\noindent Furthermore, since
\begin{align} \label{eq54}
\exp(\lambda J_{\sigma}^{(\varepsilon)}-L_{\sigma}^{(\varepsilon)}) \geq \exp \Big( \frac{\lambda \varepsilon^{2}}{12}  - \frac{\lambda^{2} (\frac{\varepsilon}{\ln \theta})^{2} C_{\theta}t}{2(1-\frac{\lambda \varepsilon}{\ln \theta})} \Big),
\end{align}
\noindent we conclude from \eqref{eq52}, \eqref{eq53} and \eqref{eq54} that
\begin{align}
\mathbb{E}[\mathbf{1}_{A}] \exp \Big( \frac{\lambda \varepsilon^{2}}{12}  - \frac{\lambda^{2} (\frac{\varepsilon}{\ln \theta})^{2} C_{\theta}t}{2(1-\frac{\lambda \varepsilon}{\ln \theta})} \Big) \leq 1.
\end{align}
\noindent Thus, by choosing $\lambda= (\frac{\varepsilon^{2}}{12})/ ((\frac{\varepsilon}{\ln \theta})^{2} C_{\theta}t + \frac{\varepsilon^{3}}{12 \ln \theta})$, we deduce that
\begin{align} \label{eq55}
\mathbb{P} \left( \sup_{u \leq t}  \mathbf{1}_{\{S_{u-} \leq \varepsilon \}} J_{u}^{(\varepsilon)} \geq \frac{\varepsilon^{2}}{12}, S_{t} > \varepsilon \right) \leq \exp \left( - \frac{\varepsilon^{4}}{2(12)^{2} \left( \frac{\varepsilon^{3}}{12 \ln \theta} + \frac{\varepsilon^{2}C_{\theta}t}{(\ln \theta)^{2}} \right)   } \right) \leq \exp\left(-\frac{\varepsilon \ln \theta}{48}\right).
\end{align}

Finally, we bound the remaining term in \eqref{eq50}. Note that 
\begin{align}
\sum_{s \leq r} \mathbf{1}_{\{S_{s-} \leq \varepsilon \}} |\Delta \log(1+Z_{s}^{(z)}))|^{2} = \int_{0}^{r} \int_{\mathcal{X}} \mathbf{1}_{\{S_{s-} \leq \varepsilon \}}   \Big(\log\Big( 1+ \frac{H(Z_{s-}^{(z)}, u, y, x)}{Z_{s-}^{(z)}+1} \Big) \Big)^{2} N({\rm d} s, {\rm d} u, {\rm d} y, {\rm d} x),
\end{align}
\noindent for $r \leq t$. In particular, by \eqref{Ineq1fact} in Lemma \ref{lemma7}, the process $(\hat{J}_{r}^{(\varepsilon)})_{r \in [0, t]}$, given by
\begin{align}
\hat{J}_{r}^{(\varepsilon)} := \int_{0}^{r}   \int_{\mathcal{X}} \mathbf{1}_{\{S_{s-} \leq \varepsilon \}}   \Big(\log\Big( 1+ \frac{H(Z_{s-}^{(z)}, u, y, x)}{Z_{s-}^{(z)}+1} \Big) \Big)^{2}  \tilde{N}({\rm d} s, {\rm d} u, {\rm d} y, {\rm d} x),
\end{align}
\noindent is a locally square-integrable martingale. Again, by \eqref{Ineq1fact} in Lemma \ref{lemma7}, we see 
\begin{align} \label{eq6}
\int_{0}^{r}  \int_{\mathcal{X}} \mathbf{1}_{\{S_{s-} \leq \varepsilon \}} \Big(\log\Big( 1+ \frac{H(Z_{s-}^{(z)}, u, y, x)}{Z_{s-}^{(z)}+1} \Big) \Big)^{2}  q({\rm d} u, {\rm d} y, {\rm d} x)  {\rm d} s \leq \frac{C_{\theta}}{(\ln \theta)^{2}} r, 
\end{align}
\noindent for $r \leq t$. For $p \geq 2$, the inequality \eqref{Ineq2fact} in Lemma \ref{lemma7} implies that the (predictable) compensator $(\hat{V}^{(p)}_{r})_{r \in [0, t]}$ of $(\sum_{s \leq r} |\Delta \hat{J}_{s}^{(\varepsilon)}|^{p})_{r \in [0, t]}$ satisfies $\hat{V}^{(p)}_{r} \leq \frac{p!}{2}(\frac{2}{\ln \theta})^{p}C_{\theta} r$, for $r \leq t$, where $C_{\theta}$ is as in \eqref{ctheta}. Then, it follows from \eqref{eq6} and Lemma \ref{lemmaA1} that, for $t < T_{D, \varepsilon}(z) \wedge \varepsilon/4\kappa \wedge \eta_{\theta}$,
\begin{align} \label{eq11}
\mathbb{P}\left( \sum_{u \leq t} \mathbf{1}_{\{S_{u-} \leq \varepsilon \}} |\Delta \log(1+Z_{u}^{(z)})|^{2} \geq \frac{\varepsilon^{2}}{6} \right) & \leq \mathbb{P}\Big( \hat{J}_{t}^{(\varepsilon)}  \geq \frac{\varepsilon^{2}}{12} \Big) + \mathbf{1}_{\left \{ \frac{C_{\theta}}{(\ln \theta)^{2}}t \geq \frac{\varepsilon^{2}}{12} \right\}} \nonumber \\
& \leq  \exp \left( - \frac{\varepsilon^{4}}{2(12)^{2} \left( \frac{\varepsilon^{2}}{6 \ln \theta} + \frac{4C_{\theta}t}{(\ln \theta)^{2}} \right)   } \right) \nonumber \\
& \leq \exp\left( -\frac{\varepsilon \ln \theta}{144}\right),
\end{align}
\noindent where we have used that $\varepsilon >1$, $\ln \theta >0$ (since $\theta >1$) and $t < \eta_{\theta} = \frac{(\ln \theta) \wedge (\ln \theta)^{2}}{12 C_{\theta}}$. 

Finally, \eqref{eq7} follows by combining \eqref{eq50}, \eqref{eq10}, \eqref{eq55} and \eqref{eq11}. 
\end{proof}

We continue with the proof of Theorem \ref{PropC2}.

\begin{proof}[Proof of Theorem \ref{PropC2}] 
Suppose that we have proved that, for $p \in (0,2]$, 
\begin{align} \label{PropC1}
\lim_{t \downarrow 0}\mathbb{E}\left[ \sup_{s \leq t}  \left| \frac{Z_{s}^{(\infty)}+1}{v_{s}+1} - 1 \right|^{p} \right]= 0. 
\end{align}
\noindent For $s \geq 0$, note that
\begin{align}
-\mathbf{m}_{\rm int} sZ_{s}^{(\infty)}-1 = -\mathbf{m}_{\rm int} s(v_{s}+1) \Big(\frac{Z_{s}^{(\infty)}+1}{v_{s}+1} -1 \Big) - \mathbf{m}_{\rm int} s(v_{s}+1) + s \mathbf{m}_{\rm int}-1
\end{align}
\noindent and thus, \eqref{PropC1} and Lemma \ref{lemma6} imply our claim in Theorem \ref{PropC2}.

Now we prove \eqref{PropC1}. Note that
\begin{align}
\sup_{s \leq t} \left| \log\left(\frac{Z_{s}^{(\infty)}+1}{v_{s}+1}  \right) \right| =  \log \left(  \sup_{s \leq t}  \left| \frac{Z_{s}^{(\infty)}+1}{v_{s}+1}\right| \vee \sup_{s \leq t}  \left| \frac{v_{s}+1}{Z_{s}^{(\infty)}+1}\right| \right), \quad t \geq 0.
\end{align}
\noindent Fix $0 \leq t < T(\infty)$, and choose $\varepsilon >e$ (i.e., $\ln \varepsilon >1$) such that $t < T(\infty) \wedge (\ln \varepsilon)/4\kappa \wedge \eta_{\theta}$. It follows from Lemma \ref{lemma3} that, for $\theta >1$ and $u > \varepsilon$, 
\begin{align}
\mathbb{P}\left( \sup_{s \leq t}  \left| \frac{Z_{s}^{(\infty)}+1}{v_{s}+1}\right| \geq u \right) \leq 3e^{-\frac{(\ln u) \ln \theta}{144}} = 3u^{-\frac{\log \theta}{144}};
\end{align}
\noindent to see this note that $(\ln \varepsilon)/4\kappa \leq (\ln u)/4\kappa$, and thus $t < T(\infty) \wedge (\ln u)/4\kappa \wedge \eta_{\theta}$. By choosing $\theta = \vartheta > e^{288}$ as in \eqref{Assump4}, we deduce that, for $p \in [1, 2]$, there is a constant $0 < C_{1}(p) < \infty$ such that 
\begin{align} \label{eq13}
\mathbb{E} \left[ \sup_{s \leq t}  \left| \frac{Z_{s}^{(\infty)}+1}{v_{s}+1} \right|^{p} \right] < C_{1}(p). 
\end{align}
In particular, for a different constant $0 < C_{2}(p) < \infty$,
\begin{align} 
\mathbb{E}\left[ \sup_{s \leq t}  \left| \frac{Z_{s}^{(\infty)}+1}{v_{s}+1} - 1 \right|^{p} \right] < C_{2}(p).
\end{align} 
\noindent Therefore, for $p \in [1,2]$, \eqref{PropC1} follows by \eqref{Pro2} and the dominated convergence theorem. The case  $p \in (0,1)$  follows by Jensen's inequality. This completes the proof.
\end{proof}

\begin{corollary} \label{corollaryII}
Suppose that $\mathbf{m}_{\rm int}<0$ and \eqref{Assump4} hold. We have the following:
\begin{enumerate}[label=(\roman*)]
\item \label{PropC3} For $p \in(0,2]$ and any $0 \leq  T < T(\infty)$, there exists a constant $0 < C(p) < \infty$ such that 
\begin{align}
\mathbb{E}\Big[ \sup_{s \leq t}  \Big| - \mathbf{m}_{\rm int} s Z_{s}^{(\infty)} \Big|^{p} \Big] < C(p), \quad \text{for} \quad t \leq T. 
\end{align} 

\item \label{PropC4} For all $0 <  r \leq t < T(\infty)$, 
\begin{align}
\mathbb{E}\Big[ \sup_{r \leq s \leq t} Z_{s}^{(\infty)} \Big] < \infty \qquad \text{and} \qquad \mathbb{E}\Big[ \sup_{r \leq s \leq t} (Z_{s}^{(\infty)})^{2} \Big] < \infty.
\end{align}
\end{enumerate}
\end{corollary}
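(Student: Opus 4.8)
The plan is to deduce both assertions from the tail estimate \eqref{eq13} established inside the proof of Theorem \ref{PropC2}, extended beyond its natural time window by the Markov property together with a self-contained second-moment bound for BPI processes, and then to read off part \ref{PropC4} from part \ref{PropC3}. For part \ref{PropC3}, fix $0 \le T < T(\infty)$ and put $t_0 := \tfrac12\big(\eta_{\vartheta}\wedge T(\infty)\big)>0$, with $\eta_\vartheta$ as in Lemma \ref{lemma3} for $\theta=\vartheta$. For $t\le t_0$, the estimate \eqref{eq13} (with $\theta=\vartheta$) gives for each $p\in[1,2]$ a finite constant $C_1(p)$ with $\mathbb{E}\big[\sup_{s\le t}|(Z_s^{(\infty)}+1)/(v_s+1)|^p\big]\le C_1(p)$. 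Since $-\mathbf{m}_{\rm int}sZ_s^{(\infty)}\le -\mathbf{m}_{\rm int}s(v_s+1)\cdot(Z_s^{(\infty)}+1)/(v_s+1)$ and the map $s\mapsto -\mathbf{m}_{\rm int}s(v_s+1)=-\mathbf{m}_{\rm int}sv_s-\mathbf{m}_{\rm int}s$ is bounded on $[0,T]$ (it is continuous on $(0,T]$, tends to $1$ as $s\downarrow0$ by Lemma \ref{lemma6}\ref{PropB3}, and $-\mathbf{m}_{\rm int}s\le -\mathbf{m}_{\rm int}T$), raising to the power $p$ and taking expectations gives the claim for $t\le t_0$, $p\in[1,2]$; Jensen's inequality then handles $p\in(0,1)$.

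To cover $t_0<t\le T$, I would split $\sup_{s\le t}(-\mathbf{m}_{\rm int}sZ_s^{(\infty)})\le \sup_{s\le t_0}(-\mathbf{m}_{\rm int}sZ_s^{(\infty)})\vee\big((-\mathbf{m}_{\rm int}T)\sup_{t_0\le s\le t}Z_s^{(\infty)}\big)$; the first term is controlled by the previous step, and since $x^p\le 1+x^2$ for $x\ge0$ and $p\in(0,2]$, it remains to bound $\mathbb{E}\big[\big(\sup_{t_0\le s\le t}Z_s^{(\infty)}\big)^2\big]$. Conditioning at time $t_0$ and using the Markov property, this equals $\mathbb{E}\big[g(Z_{t_0}^{(\infty)})\big]$ with $g(z):=\mathbb{E}\big[\big(\sup_{u\le t-t_0}Z_u^{(z)}\big)^2\big]$, and $Z_{t_0}^{(\infty)}$ is square-integrable since $Z_{t_0}^{(\infty)}+1\le (v_{t_0}+1)\sup_{s\le t_0}(Z_s^{(\infty)}+1)/(v_s+1)$ with $v_{t_0}<\infty$ by Lemma \ref{lemma2}. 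So the key input is a moment bound for the BPI process itself: under $\mathbf{m}_{\rm int}<0$ and \eqref{Assump4} (hence $\boldsymbol{\sigma}^2_{\rm br},\boldsymbol{\sigma}^2_{\rm int}<\infty$) one has $\mathbb{E}\big[\sup_{u\le v}(Z_u^{(z)})^2\big]\le C(v)(1+z^2)$ for all $z\in\mathbb{N}_0$, which would give $\mathbb{E}\big[\big(\sup_{t_0\le s\le t}Z_s^{(\infty)}\big)^2\big]\le C(t-t_0)\big(1+\mathbb{E}[(Z_{t_0}^{(\infty)})^2]\big)<\infty$ and complete part \ref{PropC3}.

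The moment bound would be proved through Dynkin's formula along the localizing sequence $\tau_n:=\inf\{u:Z_u^{(z)}\ge n\}$. For $f_2(z)=z^2$ a direct computation of $Qf_2$ shows its leading term is $2z^3\mathbf{m}_{\rm int}<0$ and the remainder is $O(1+z^2)$ with coefficients controlled by $d,c,\boldsymbol{\sigma}^2_{\rm br},\boldsymbol{\sigma}^2_{\rm int}$, so $Qf_2$ is bounded above by a constant $K$, whence $\mathbb{E}[(Z_{u\wedge\tau_n}^{(z)})^2]\le z^2+Ku$. For $f_1(z)=z$ one has $Qf_1(z)=z\mathbf{m}_{\rm br}+z(z-1)\mathbf{m}_{\rm int}$, again bounded above by a constant, while the predictable quadratic variation of the associated martingale $M^{(1)}$ has rate $\sum_i(z\pi_i+z(z-1)b_i)i^2+zd+cz(z-1)=O(1+z^2)$; thus Doob's $L^2$ inequality combined with the fixed-time bound gives $\mathbb{E}[(\sup_{u\le v}M^{(1)}_{u\wedge\tau_n})^2]\le C(v)(1+z^2)$, and since $Z_{u\wedge\tau_n}^{(z)}\le z+(\text{const})\,v+M^{(1)}_{u\wedge\tau_n}$ on $[0,v]$, letting $n\to\infty$ with Fatou (the chain being non-explosive, $\tau_n\to\infty$ a.s.) yields the asserted bound.

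Part \ref{PropC4} is then immediate. Given $0<r\le t<T(\infty)$, apply part \ref{PropC3} with $T=t$; for $s\in[r,t]$ one has $-\mathbf{m}_{\rm int}s\ge -\mathbf{m}_{\rm int}r>0$, so $Z_s^{(\infty)}\le (-\mathbf{m}_{\rm int}r)^{-1}(-\mathbf{m}_{\rm int}sZ_s^{(\infty)})$ and $(Z_s^{(\infty)})^2\le (-\mathbf{m}_{\rm int}r)^{-2}(-\mathbf{m}_{\rm int}sZ_s^{(\infty)})^2$, and taking the supremum over $[r,t]$ and then expectations, using part \ref{PropC3} with $p=1$ and $p=2$, gives the two finiteness statements. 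The step I expect to be the main obstacle is the passage past the short-time window $[0,\eta_\vartheta)$, that is, proving $\mathbb{E}[(\sup_{t_0\le s\le t}Z_s^{(\infty)})^2]<\infty$: this forces the separate moment analysis of the BPI dynamics above, in which one must be careful to invoke second moments only at fixed times and to control the running supremum through the first-moment martingale $M^{(1)}$ (quadratic variation rate $O(1+z^2)$) rather than through the second-moment martingale (quadratic variation rate of order $z^4$, which would spuriously demand fourth moments), and to justify Dynkin's formula and the limit $n\to\infty$ for this non-explosive chain with unbounded jumps.
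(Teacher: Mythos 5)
Your proposal is correct, and it is in fact a more complete argument than the one the paper gives. The paper's proof of part \ref{PropC3} is one line: combine the pointwise inequality \eqref{eq14} with the boundedness of $s\mapsto -\mathbf{m}_{\rm int}s(v_s+1)$ on $[0,T]$ (Lemma \ref{lemma6}) and the $\mathbb{L}_p$-bound \eqref{eq13}. You correctly observe, however, that \eqref{eq13} is derived inside the proof of Theorem \ref{PropC2} only for $t<T(\infty)\wedge\eta_\theta$: the restriction $t<\eta_\theta$ originates in Lemma \ref{lemma3} and does not go away as the free parameter $\varepsilon$ is sent to infinity. Thus the paper's short proof, as written, covers the stated range ``any $0\le T<T(\infty)$'' only when $T(\infty)\le\eta_\theta$; for $\eta_\theta\le T<T(\infty)$ something extra is needed, and your Markov-plus-Dynkin extension supplies exactly that. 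Your decomposition into $[0,t_0]$ and $[t_0,T]$, the use of $x^p\le 1+x^2$, the Dynkin bound $Qf_2\le K$ giving a fixed-time second moment, and crucially the observation that the running supremum must be controlled through the \emph{first}-moment martingale $M^{(1)}$ (whose bracket rate is $O(1+z^2)$) rather than the second-moment martingale (bracket of order $z^4$) are all sound and exactly where the care must go. The one point to tighten: when you write that $\mathbb{E}[(\sup_{t_0\le s\le t}Z^{(\infty)}_s)^2]=\mathbb{E}[g(Z^{(\infty)}_{t_0})]$ you are invoking the Markov property of $Z^{(\infty)}$, which is not established as such prior to this corollary; the clean way is to carry out the computation for $Z^{(z)}$ (a bona fide Markov chain by Proposition \ref{Pro1}), obtain $\mathbb{E}\big[\big(\sup_{t_0\le s\le t}Z^{(z)}_s\big)^2\big]\le C(t-t_0)\big(1+\mathbb{E}[(Z^{(z)}_{t_0})^2]\big)\le C(t-t_0)\big(1+\mathbb{E}[(Z^{(\infty)}_{t_0})^2]\big)$ using the pathwise domination from Proposition \ref{Prop6}, and then let $z\to\infty$ by monotone convergence (Proposition \ref{Prop6} and Theorem \ref{Main1} give $Z^{(z)}\uparrow Z^{(\infty)}$ a.s.). Your treatment of part \ref{PropC4} matches the paper's exactly.
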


\begin{proof}
Note that 
\begin{align} \label{eq14}
- \mathbf{m}_{\rm int} sZ_{s}^{(\infty)} \leq - \mathbf{m}_{\rm int} s(v_{s}+1) \frac{Z_{s}^{(\infty)}+1}{v_{s}+1}.
\end{align}
\noindent Recall that $- \mathbf{m}_{\rm int} >0$. By Lemma \ref{lemma6}, $\lim_{s \downarrow 0} - \mathbf{m}_{\rm int} s(v_{s}+1) =1$. In particular, $s \mapsto - \mathbf{m}_{\rm int} s(v_{s}+1)$ is bounded on $[0, T]$. Then, \eqref{eq14}, Lemma \ref{lemma6} and \eqref{eq13} imply \ref{PropC3} for $p \in[1,2]$. For $p \in (0,1)$, the inequality in \ref{PropC3} follows by Jensen's inequality. To prove \ref{PropC4}, note that, for $0 < r \leq s \leq t$, 
\begin{align}
Z_{s}^{(\infty)} \leq (-r \mathbf{m}_{\rm int})^{-1} \sup_{r\leq u \leq t} \left|- \mathbf{m}_{\rm int} uZ_{u}^{(\infty)}\right|.
\end{align}
Then, our claim follows from part \ref{PropC3}.
\end{proof}

\subsection{Proof of Theorem \ref{Main3}} \label{ProofofMain3}

In this section, we prove Theorem \ref{Main3}. We follow the approach from \cite{Vlada20152} (see also \cite{Vlada2015}), modifying it to account for the branching and cooperative mechanisms of the BPI process. Recall the definition of $T(\infty)$ given before \eqref{Def1Speed}.
\begin{lemma} \label{lemma8}
Suppose that $\mathbf{m}_{\rm int}<0$ and \eqref{Assump4} hold. For all $0 < r \leq t < T(\infty)$, we have that
\begin{align}
Z_{t}^{(\infty)} = Z_{r}^{(\infty)} + \int_{(r, t]} \int_{\mathcal{X}} H(Z_{s-}^{(\infty)}, u, y, x) N({\rm d} s, {\rm d} u, {\rm d} y, {\rm d} x).
\end{align}
\end{lemma}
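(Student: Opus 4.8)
The plan is to realise $Z^{(\infty)}$ through the monotone coupling already at hand, rather than passing to a limit in the SDE directly. Recall that all the processes $(Z^{(z)})_{z\geq 1}$ are the strong solutions of \eqref{SDE} driven by one and the same Poisson measure $N$, and that, by Proposition \ref{Prop6}, $z\mapsto Z^{(z)}_s$ is pathwise non-decreasing. Hence $\widetilde{Z}_s\coloneqq \lim_{z\to\infty}Z^{(z)}_s$ is well defined in $\mathbb{N}_{0,\infty}$ for every $s\geq 0$, almost surely, and (weak limits being unique, together with the fact that for fixed $s>0$ the limiting process has no fixed time of discontinuity, so $Z^{(z)}_s\to Z^{(\infty)}_s$ in distribution) $\widetilde{Z}$ has the same finite-dimensional distributions as the process $Z^{(\infty)}$ of Theorem \ref{Main1}. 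We may therefore take $Z^{(\infty)}=\widetilde{Z}$; in particular $\mathbb{P}(Z^{(\infty)}_s<\infty \text{ for all }s>0)=1$.

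\textbf{Key step.} The crucial observation is that, as soon as $r>0$, the approximating sequence is \emph{eventually exactly equal} to the limit at time $r$: on the full-probability event $\{Z^{(\infty)}_r<\infty\}$ the sequence $(Z^{(z)}_r)_{z\geq 1}$ is non-decreasing, $\mathbb{N}_0$-valued, and converges to the finite integer $Z^{(\infty)}_r$, so it is constant for $z$ large. Thus $Z^\star\coloneqq \inf\{z\geq 1: Z^{(z)}_r=Z^{(\infty)}_r\}$ is almost surely finite and $\mathcal{F}_r$-measurable, and $Z^{(z)}_r=Z^{(\infty)}_r$ for every $z\geq Z^\star$. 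I would then propagate this coincidence forward in time by pathwise uniqueness: for $z,z'\geq Z^\star$, both $(Z^{(z)}_s)_{s\geq r}$ and $(Z^{(z')}_s)_{s\geq r}$ solve, driven by the restriction of $N$ to $(r,\infty)\times\mathcal{X}$, an equation of the form $Y_s=Y_r+\int_{(r,s]}\int_{\mathcal{X}}H(Y_{w-},u,y,x)\,N(\mathrm{d}w,\mathrm{d}u,\mathrm{d}y,\mathrm{d}x)$ with the same ($\mathcal{F}_r$-measurable, finite) initial value $Y_r=Z^{(\infty)}_r$. Since this equation has exactly the form of \eqref{SDE} — $N$ restricted to $(r,\infty)\times\mathcal{X}$ is again a Poisson measure with the same intensity, independent of $\mathcal{F}_r$ — the pathwise uniqueness contained in Proposition \ref{Pro1} forces $Z^{(z)}_s=Z^{(z')}_s$ for all $s\geq r$. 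Letting $z'\to\infty$ yields $Z^{(\infty)}_s=Z^{(Z^\star)}_s$ for all $s\geq r$; here one also uses that the $Z^{(z)}$, and hence $Z^{(\infty)}$, do not explode in finite time under $\mathbf{m}_{\rm int}<0$ and \eqref{Assump4} (equivalently $\sup_{r\leq s\leq t}Z^{(\infty)}_s<\infty$ almost surely, by Corollary \ref{corollaryII}), so everything stays finite on $[r,t]$.

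\textbf{Conclusion.} It then remains to write the SDE \eqref{SDE} for $Z^{(Z^\star)}$ between times $r$ and $t$ — legitimate for all $t$ precisely because of non-explosion — and to replace $Z^{(Z^\star)}_\cdot$ and its left limits by $Z^{(\infty)}_\cdot$ and its left limits, which coincide on $[r,t]$ and on $(r,t]$ respectively; this gives the stated identity.

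\textbf{Main obstacle.} The one genuinely delicate point is the use of pathwise uniqueness ``restarted at the deterministic time $r$'' from a random initial datum: one has to be sure that Proposition \ref{Pro1} applies with an $\mathcal{F}_r$-measurable finite initial condition and the time-shifted driving noise. This is standard (condition on $\mathcal{F}_r$, or simply re-run the localisation argument behind Propositions \ref{Pro1} and \ref{Prop6}), but it is the step where the argument uses something beyond the soft weak-convergence statement of Theorem \ref{Main1}. Everything else — the monotone limit, the eventual constancy at time $r$, and the substitution into the SDE — is immediate.
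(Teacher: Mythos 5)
Your proof is correct, but it takes a genuinely different route from the paper's.

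The paper works directly at the level of the stochastic integral: it writes the SDE for $Z^{(z)}$ on $(r,t]$, splits the integrand $H(Z^{(z)}_{s-},u,y,x)$ into the part with $x\geq 0$ (for which $z\mapsto H$ is non-decreasing, so the a.s.\ $\sigma$-finiteness of $N$ and monotone convergence handle the limit) and the part with $x=-1$ (for which $|H(Z^{(z)}_{s-},u,y,-1)|\leq |H(Z^{(\infty)}_{s-},u,y,-1)|$; the dominating function is $N$-integrable on $(r,t]$ thanks to the moment bound in Corollary~\ref{corollaryII}\ref{PropC4}, so dominated convergence applies). Your argument instead exploits eventual \emph{exact} coincidence: since $(Z^{(z)}_r)_{z\geq 1}$ is a non-decreasing $\mathbb{N}_0$-valued sequence converging to the a.s.\ finite $Z^{(\infty)}_r$, it is eventually constant, and propagating this forward via pathwise uniqueness identifies $Z^{(\infty)}$ on $[r,t]$ with a single finite-mass approximant $Z^{(z)}$, for which the SDE identity is immediate from Proposition~\ref{Pro1}. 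Your route yields the cleaner intermediate statement (pathwise identification on $[r,t]$ with a fixed $Z^{(z)}$, rather than mere convergence of stochastic integrals). Its cost is the point you yourself flag: Proposition~\ref{Pro1} gives strong uniqueness for \emph{deterministic} initial data at time $0$, so uniqueness restarted at the positive time $r$ from an $\mathcal{F}_r$-measurable, a.s.\ finite initial condition, applied on the $\mathcal{F}_r$-measurable event $\{Z^{\star}\leq z\wedge z'\}$, is not literally contained there and needs a conditioning or Gronwall adaptation along the lines of the proof of Proposition~\ref{Prop6}. That gap is real but fillable by standard means; the paper's convergence-theorem route avoids it entirely by leaning only on the forward-time monotonicity of Proposition~\ref{Prop6} together with the $L^2$ bound of Corollary~\ref{corollaryII}.
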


\begin{proof}
Recall that \eqref{Assump4} implies $\mathbf{m}_{\rm br}<\infty$, the latter together with $\mathbf{m}_{\rm int}<0$ implies that the BPI process does not explode in finite time; see Example 2.1 in \cite{Berzunza2020}. By Proposition \ref{Pro1}, for all $z \in \mathbb{N}_{0}$ and $0 < r \leq t < T(\infty)$,
\begin{align} \label{Integral1}
Z_{t}^{(z)} & = Z_{r}^{(z)} + \int_{(r, t]} \int_{\mathcal{X}} H(Z_{s-}^{(z)}, u, y, x) N({\rm d} s, {\rm d} u, {\rm d} y, {\rm d} x) \nonumber \\
& = Z_{r}^{(z)} + \int_{(r, t]} \int_{\mathcal{X}} H(Z_{s-}^{(z)}, u, y, -1) \mathbf{1}_{\{ x=-1 \}} N({\rm d} s, {\rm d} u, {\rm d} y, {\rm d} x) \nonumber \\
& \quad \quad \quad + \int_{(r, t]} \int_{\mathcal{X}} H(Z_{s-}^{(z)}, u, y, x) \mathbf{1}_{\{ x \geq 0 \}} N({\rm d} s, {\rm d} u, {\rm d} y, {\rm d} x). 
\end{align}
\noindent Proposition \ref{Prop6} and Theorem \ref{Main1} imply that, for any $0  < t < T(\infty)$, $\lim_{z \rightarrow \infty} Z_{t}^{(z)} =  Z_{t}^{(\infty)}$ (increasingly), almost surely. Recall that the Poisson random measure $N$ is an almost-surely $\sigma$-finite measure. Since $z \in \mathbb{R}_{+} \mapsto H(z, u, y, x) \mathbf{1}_{\{ x \geq 0 \}}$ is an increasing function, the monotone convergence theorem implies that
\begin{align}
& \lim_{z \rightarrow \infty} \int_{(r, t]} \int_{\mathcal{X}} H(Z_{s-}^{(z)}, u, y, x) \mathbf{1}_{\{ x \geq 0 \}} N({\rm d} s, {\rm d} u, {\rm d} y, {\rm d} x) \nonumber \\
& \quad \quad \quad = \int_{(r, t]} \int_{\mathcal{X}} H(Z_{s-}^{(\infty)}, u, y, x) \mathbf{1}_{\{ x \geq 0 \}} N({\rm d} s, {\rm d} u, {\rm d} y, {\rm d} x).
\end{align}
\noindent On the other hand, $|H(Z_{s-}^{(z)}, u, y, -1)| \mathbf{1}_{\{ x=-1 \}} \leq |H(Z_{s-}^{(\infty)}, u, y, -1)| \mathbf{1}_{\{ x=-1 \}}$, for all $r \leq s \leq t < T(\infty)$ and $(u, y, x) \in \mathcal{X}$. By Corollary \ref{corollaryII} \ref{PropC4}, one can easily deduce that $|H(Z_{s-}^{(\infty)}, u, y, -1)| \mathbf{1}_{\{ x=-1 \}}$ is integrable with respect to the Poisson random measure $N$, almost surely. Therefore, by the dominated convergence theorem,
\begin{align}
& \lim_{z \rightarrow \infty} \int_{(r, t]} \int_{\mathcal{X}} H(Z_{s-}^{(z)}, u, y, -1) \mathbf{1}_{\{ x = -1 \}} N({\rm d} s, {\rm d} u, {\rm d} y, {\rm d} x) \nonumber \\
& \quad \quad \quad = \int_{(r, t]} \int_{\mathcal{X}} H(Z_{s-}^{(\infty)}, u, y, -1) \mathbf{1}_{\{ x = -1 \}} N({\rm d} s, {\rm d} u, {\rm d} y, {\rm d} x).
\end{align}
\noindent Clearly, the combination of the above limits and \eqref{Integral1} imply our claim. 
\end{proof}

\begin{lemma} \label{lemma9}
Suppose that $\mathbf{m}_{\rm int}<0$ and \eqref{Assump4} hold. Then, for any $0 < r \leq t < T(\infty)$, 
\begin{align} \label{eq15}
\mathbb{E}\Big[ \Big| \int_{r}^{t} (\mathbf{m}_{\rm br} Z_{s}^{(\infty)} + \mathbf{m}_{\rm int}  Z_{s}^{(\infty)} (Z_{s}^{(\infty)}-1)) {\rm d} s  \Big| \Big] < \infty.
\end{align}
\noindent In particular, 
\begin{align} \label{eq16}
Z_{t}^{(\infty)} & = Z_{r}^{(\infty)} + \int_{r}^{t} (\mathbf{m}_{\rm br} Z_{s}^{(\infty)} + \mathbf{m}_{\rm int} Z_{s}^{(\infty)} (Z_{s}^{(\infty)}-1)) {\rm d} s \nonumber \\
& \quad \quad \quad + \int_{(r,t]} \int_{\mathcal{X}} H(Z_{s-}^{(\infty)}, u, y, x) \tilde{N}({\rm d} s, {\rm d} u, {\rm d} y, {\rm d} x).
\end{align}
\noindent Moreover, 
\begin{align} \label{eq17}
- \mathbf{m}_{\rm int} t Z_{t}^{(\infty)} & = - \mathbf{m}_{\rm int} r Z_{r}^{(\infty)}  - \int_{r}^{t} (\mathbf{m}_{\rm int} \mathbf{m}_{\rm br}sZ_{s}^{(\infty)} + \mathbf{m}_{\rm int} Z_{s}^{(\infty)} + (\mathbf{m}_{\rm int})^{2}s Z_{s}^{(\infty)} (Z_{s}^{(\infty)}-1) ) {\rm d} s \nonumber \\
& \quad \quad \quad - \mathbf{m}_{\rm int} \int_{(r,t]} \int_{\mathcal{X}}  s H(Z_{s-}^{(\infty)}, u, y, x) \tilde{N}({\rm d} s, {\rm d} u, {\rm d} y, {\rm d} x).
\end{align}
\end{lemma}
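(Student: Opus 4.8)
The plan is to convert the pure-jump SDE from Lemma \ref{lemma8} into its semimartingale (drift + compensated-jump) form, after checking that the compensator of the jump part is integrable so that the decomposition is legitimate. First I would compute, for fixed $z \in \mathbb{N}_0$ and finite truncation, the compensator of $\int_{(r,t]}\int_{\mathcal{X}} H(Z^{(z)}_{s-},u,y,x)\,N(\mathrm{d}s,\mathrm{d}u,\mathrm{d}y,\mathrm{d}x)$ using the explicit intensity \eqref{InteMeasu}, \eqref{Measuresbrint} and the form of $H$ in \eqref{funcH}. A direct calculation gives
\begin{align}
\int_{\mathcal{X}} H(z,u,y,x)\, q(\mathrm{d}u,\mathrm{d}y,\mathrm{d}x)
&= z\!\int_{\mathbb{N}_{-1}}\! x\,\nu_{\rm br}(\mathrm{d}x) + \tfrac{z(z-1)}{2}\!\int_{\mathbb{N}_{-1}}\! x\,\nu_{\rm int}(\mathrm{d}x)
= \mathbf{m}_{\rm br}\,z + \mathbf{m}_{\rm int}\,z(z-1),
\end{align}
using $\int x\,\nu_{\rm br}(\mathrm{d}x) = -d + \sum_i i\pi_i = \mathbf{m}_{\rm br}$ and $\tfrac12\int x\,\nu_{\rm int}(\mathrm{d}x) = -c + \sum_i i b_i = \mathbf{m}_{\rm int}$ (both finite since \eqref{Assump4} implies $\mathbf{m}_{\rm br}<\infty$ and $\boldsymbol{\sigma}^2_{\rm int}<\infty$). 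This identifies the drift in \eqref{eq16}, and \eqref{eq17} then follows by applying It\^o's product rule to $s \mapsto -\mathbf{m}_{\rm int}\,s\,Z^{(\infty)}_s$ (the $s$ variable being of bounded variation, it contributes only the extra $-\mathbf{m}_{\rm int}\int Z^{(\infty)}_s\,\mathrm{d}s$ term).

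The crux is \eqref{eq15}: the integrability that licenses separating the finite-variation drift from a genuine (local) martingale. Here I would invoke Corollary \ref{corollaryII}\ref{PropC4}, which under $\mathbf{m}_{\rm int}<0$ and \eqref{Assump4} gives $\mathbb{E}[\sup_{r\le s\le t} Z^{(\infty)}_s] < \infty$ and $\mathbb{E}[\sup_{r\le s\le t}(Z^{(\infty)}_s)^2] < \infty$ for $0<r\le t<T(\infty)$. Then
\begin{align}
\mathbb{E}\Big[\Big|\int_r^t (\mathbf{m}_{\rm br}Z^{(\infty)}_s + \mathbf{m}_{\rm int}Z^{(\infty)}_s(Z^{(\infty)}_s-1))\,\mathrm{d}s\Big|\Big]
\le (t-r)\Big(|\mathbf{m}_{\rm br}|\,\mathbb{E}\big[\sup_{r\le s\le t}Z^{(\infty)}_s\big] + |\mathbf{m}_{\rm int}|\,\mathbb{E}\big[\sup_{r\le s\le t}(Z^{(\infty)}_s)^2\big]\Big) < \infty,
\end{align}
which is \eqref{eq15}. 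The same second-moment control, combined with Lemma \ref{lemma7} (which bounds the logarithmic jump intensities and, after a similar computation, the raw second moments of the jumps), shows $\int_r^t \int_{\mathcal{X}} H(Z^{(\infty)}_{s-},u,y,x)^2\, q(\mathrm{d}u,\mathrm{d}y,\mathrm{d}x)\,\mathrm{d}s$ has finite expectation, so the compensated integral in \eqref{eq16}--\eqref{eq17} is a genuine (locally) square-integrable martingale and the compensation is valid.

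The main obstacle I anticipate is a technical one: justifying that one may pass from the finite-$z$ identity (where everything is an integrable semimartingale by Proposition \ref{Pro1}) to the $z=\infty$ identity on $(r,t]$. The move is to start from Lemma \ref{lemma8}, subtract the compensator computed above for $Z^{(\infty)}$, and verify that the resulting object is indeed the $\tilde N$-integral — this requires knowing the $\sigma$-finite Poisson measure $N$ integrates $|H(Z^{(\infty)}_{s-},\cdot)|$ (from the dominating-integrability argument already carried out in Lemma \ref{lemma8}) and $H(Z^{(\infty)}_{s-},\cdot)^2$ over $(r,t]\times\mathcal{X}$ in expectation, both of which reduce to the $\mathbb{L}^1$ and $\mathbb{L}^2$ bounds of Corollary \ref{corollaryII}\ref{PropC4}. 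Once integrability is in hand, the identities \eqref{eq16} and \eqref{eq17} are bookkeeping: \eqref{eq16} is Lemma \ref{lemma8} with the drift extracted, and \eqref{eq17} is \eqref{eq16} multiplied through by $-\mathbf{m}_{\rm int}s$ with the product rule accounting for the $\mathrm{d}s$-differential of the deterministic factor.
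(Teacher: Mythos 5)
Your proposal is correct and follows essentially the same route as the paper's proof: verify the integrability \eqref{eq15} from Corollary \ref{corollaryII}\ref{PropC4}, extract the compensator from the pure-jump representation in Lemma \ref{lemma8} to get \eqref{eq16} (the paper cites Theorem 8.23 of \cite{Peszat2007} for this step), and obtain \eqref{eq17} by integration by parts. The only minor quibble is that the $\mathbb{L}^2$-bound on the jump compensator follows from the direct computation $\int_{\mathcal X} H(z,u,y,x)^2\,q(\mathrm{d}u,\mathrm{d}y,\mathrm{d}x)=\boldsymbol{\sigma}^2_{\rm br}z+\boldsymbol{\sigma}^2_{\rm int}z(z-1)$ together with Corollary \ref{corollaryII}\ref{PropC4}, rather than from Lemma \ref{lemma7} (which controls the logarithmic jumps, not the raw ones).
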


\begin{proof}
Corollary \ref{corollaryII} \ref{PropC4} implies \eqref{eq15} and thus, Lemma \ref{lemma8} implies \eqref{eq16} (see e.g., Theorem 8.23 in  \cite{Peszat2007}). Integration by parts and \eqref{eq16} imply \eqref{eq17}. 
\end{proof}

Recall, from the beginning of Section \ref{IntegralR}, the definitions of the Poisson random measures $N_{\rm br}$ and $N_{\rm int}$ on $\mathbb{R}_{+} \times \mathbb{N} \times \mathbb{N}_{-1}$ and $\mathbb{R}_{+} \times \Delta \times \mathbb{N}_{-1}$ with intensity measures ${\rm d} s \otimes q_{\rm br}({\rm d} y, {\rm d} x)$ and ${\rm d} s  \otimes q_{\rm int}({\rm d} y, {\rm d} x)$, respectively, where 
\begin{align*} 
q_{\rm br}({\rm d} y, {\rm d} x) \coloneqq \sum_{i \geq 1}\delta_{i}({\rm d} y)   \otimes \nu_{\rm br}({\rm d} x) \qquad \textrm{and}\qquad
q_{\rm int}({\rm d} y, {\rm d} x) \coloneqq \sum_{(i,j) \in \Delta}\delta_{(i,j)}({\rm d} y) \otimes \nu_{\rm int}({\rm d} x),
\end{align*}
as well as the notation $\Delta_{k} = \{ (i,j) \in \Delta: 1 \leq i <j\leq k\}$. We also let $\tilde{N}_{\rm br}$ and $\tilde{N}_{\rm int}$ be the compensated Poisson random measures of $N_{\rm br}$ and $N_{\rm int}$, respectively. 

\begin{lemma}  \label{lemma10}
Suppose that $\mathbf{m}_{\rm int}<0$ and \eqref{Assump4} hold. Fix $0 < t^{\ast} < T(\infty)$. The processes $(M_{t}^{\rm br})_{t \geq 0}$ and $(M_{t}^{\rm int})_{t \geq 0}$ given by
\begin{align} \label{eq31}
M_{t}^{\rm br} := - \mathbf{m}_{\rm int}  \int_{0}^{t \wedge t^{\ast}} \int_{\mathbb{N}} \int_{\mathbb{N}_{-1}}  s x \mathbf{1}_{\{ 0 < y \leq Z_{s-}^{(\infty)}\}} \tilde{N}_{\rm br}({\rm d} s, {\rm d} y, {\rm d} x)
\end{align}
\noindent and 
\begin{align} \label{eq31b}
M_{t}^{\rm int} := - \mathbf{m}_{\rm int}  \int_{0}^{t \wedge t^{\ast}} \int_{\Delta} \int_{\mathbb{N}_{-1}}  s x \mathbf{1}_{\{ y \in \Delta_{Z_{s-}^{(\infty)}} \}} \tilde{N}_{\rm int}({\rm d} s, {\rm d} y, {\rm d} x), \quad \text{for} \quad t \geq 0,
\end{align}
\noindent are well-defined, locally square-integrable martingales with predictable quadratic variation $(\langle M^{\rm br} \rangle_{t})_{t \geq 0}$ and $(\langle M^{\rm int} \rangle_{t})_{t \geq 0}$ given by
\begin{align} \label{eq18}
\langle M^{\rm br}\rangle_{t} = (\mathbf{m}_{\rm int})^{2} \boldsymbol{\sigma}^{2}_{{\rm br}} \int_{0}^{t \wedge t^{\ast}} s^{2}Z_{s}^{(\infty)} {\rm d}s 
\end{align}
\noindent and 
\begin{align} \label{eq18b}
\langle M^{\rm int}\rangle_{t} = (\mathbf{m}_{\rm int})^{2} \boldsymbol{\sigma}^{2}_{{\rm int}} \int_{0}^{t \wedge t^{\ast}} s^{2}Z_{s}^{(\infty)}(Z_{s}^{(\infty)}-1) {\rm d}s, \quad \text{for} \quad t \geq 0.
\end{align}
\noindent Moreover, for $p \in (0,2]$ and any $0 \leq T <\infty$, there exist constants $0 < C_{1}(p), C_{2}(p) <\infty$ such that 
\begin{align} \label{eq19}
\mathbb{E}\Big[ \sup_{s \leq t} |M_{s}^{\rm br}|^{p} \Big] \leq  C_{1}(p) t^{p}
\end{align}
\noindent and 
\begin{align} \label{eq19b}
\mathbb{E}\Big[ \sup_{s \leq t} |M_{s}^{\rm int}|^{p} \Big] \leq  C_{2}(p) t^{p/2}, \quad \text{for} \quad t \leq T.
\end{align} 
\end{lemma}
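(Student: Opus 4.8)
The plan is to read $M^{\rm br}$ and $M^{\rm int}$ off the semimartingale decomposition \eqref{eq17} of Lemma \ref{lemma9}: by \eqref{eq2I} the compensated stochastic integral appearing there splits, using the independence of $N_{\rm br}$ and $N_{\rm int}$, into a part driven by $\tilde{N}_{\rm br}$ and a part driven by $\tilde{N}_{\rm int}$, and upon letting the lower endpoint tend to $0$ and stopping at $t^{\ast}$ these are precisely $M^{\rm br}$ and $M^{\rm int}$ (up to the constant $-\mathbf{m}_{\rm int}$). So the statement reduces to three routine verifications: (a) the two stochastic integrals against the compensated Poisson measures, now started from $0$, are well-defined locally square-integrable martingales; (b) the identification of their predictable quadratic variations; (c) the $\mathbb{L}_p$ estimates.

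For (a) and (b) I would first write down the candidate compensators: for a stochastic integral against a compensated Poisson random measure with intensity ${\rm d}s\otimes q_{\bullet}$, the predictable quadratic variation is the ${\rm d}s\otimes q_{\bullet}$-integral of the square of the integrand. Performing the $y$-integration via $\int_{\mathbb{N}}\mathbf{1}_{\{0<y\le z\}}\sum_{i\ge1}\delta_i({\rm d}y)=z$ and $\#\Delta_z=z(z-1)/2$, and the $x$-integration via $\int_{\mathbb{N}_{-1}}x^2\,\nu_{\rm br}({\rm d}x)=d+\sum_{i\ge1}i^2\pi_i=\boldsymbol{\sigma}^{2}_{{\rm br}}$ and $\int_{\mathbb{N}_{-1}}x^2\,\nu_{\rm int}({\rm d}x)=2c+\sum_{i\ge1}2i^2 b_i=2\boldsymbol{\sigma}^{2}_{{\rm int}}$ (both finite by \eqref{Assump4}), one obtains the right-hand sides of \eqref{eq18} and \eqref{eq18b}, after replacing $Z_{s-}^{(\infty)}$ by $Z_s^{(\infty)}$ under ${\rm d}s$ as they differ only on a countable set. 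It then remains to check that these candidate compensators have finite expectation; this is exactly the condition promoting the stochastic integrals to genuine square-integrable martingales with the stated predictable quadratic variations. By Corollary \ref{corollaryII} \ref{PropC3} with $p=1$ one has $\mathbb{E}[s^2 Z_s^{(\infty)}]=s\,\mathbb{E}[s Z_s^{(\infty)}]\le s\,C(1)/(-\mathbf{m}_{\rm int})$, and with $p=2$ one has $\mathbb{E}[s^2 Z_s^{(\infty)}(Z_s^{(\infty)}-1)]\le\mathbb{E}[(s Z_s^{(\infty)})^2]\le C(2)/(\mathbf{m}_{\rm int})^2$; both are integrable over $[0,t^{\ast}]$, whence $\mathbb{E}[\langle M^{\rm br}\rangle_t]<\infty$ and $\mathbb{E}[\langle M^{\rm int}\rangle_t]<\infty$. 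The only delicate point is that $Z_0^{(\infty)}=\infty$, but the $s^2$ weight (coming from the factor $s$ in \eqref{eq17}) tames the blow-up of $Z_s^{(\infty)}$ as $s\downarrow0$, and this is precisely where the moment control of Corollary \ref{corollaryII}---itself resting on Theorem \ref{PropC2}, hence on \eqref{Assump4}---enters.

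For (c) it suffices to treat $p=2$ and then interpolate downward. Doob's $\mathbb{L}_2$ inequality combined with the bounds just obtained gives $\mathbb{E}[\sup_{s\le t}|M_s^{\rm br}|^2]\le4\,\mathbb{E}[\langle M^{\rm br}\rangle_t]\le 4(\mathbf{m}_{\rm int})^2\boldsymbol{\sigma}^{2}_{{\rm br}}\int_0^{t}s\,C(1)/(-\mathbf{m}_{\rm int})\,{\rm d}s\le C_1(2)t^2$ and $\mathbb{E}[\sup_{s\le t}|M_s^{\rm int}|^2]\le4\,\mathbb{E}[\langle M^{\rm int}\rangle_t]\le 4(\mathbf{m}_{\rm int})^2\boldsymbol{\sigma}^{2}_{{\rm int}}\int_0^{t}C(2)/(\mathbf{m}_{\rm int})^2\,{\rm d}s\le C_2(2)t$; the discrepancy between the powers $t^2$ and $t$ is exactly the extra factor of $Z_s^{(\infty)}$ in $\langle M^{\rm int}\rangle$ compared with $\langle M^{\rm br}\rangle$, which costs one power of $s$ inside the time integral. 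For $p\in(0,2)$, Jensen's inequality applied to $x\mapsto x^{p/2}$ yields $\mathbb{E}[\sup_{s\le t}|M_s^{\rm br}|^p]\le(\mathbb{E}[\sup_{s\le t}|M_s^{\rm br}|^2])^{p/2}\le C_1(p)t^p$ and, likewise, $\mathbb{E}[\sup_{s\le t}|M_s^{\rm int}|^p]\le C_2(p)t^{p/2}$; since both processes are constant after $t^{\ast}$, these bounds hold for all $t\le T$ with $T<\infty$ arbitrary. The whole argument is essentially bookkeeping once Corollary \ref{corollaryII} is in hand; the main---and mild---obstacle is ensuring that the compensators remain integrable all the way down to $s=0$ despite $Z_0^{(\infty)}=\infty$, for which the $s^2$ weight and the second-moment control $\mathbb{E}[(sZ_s^{(\infty)})^2]<\infty$ are exactly what is needed.
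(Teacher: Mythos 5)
Your proposal is correct and follows essentially the same route as the paper: compute the candidate predictable quadratic variations by integrating $s^2x^2$ against the intensity measure (with the $y$-marginal contributing $Z_{s-}$ resp.\ $Z_{s-}(Z_{s-}-1)/2$ and the $x$-marginal contributing $\boldsymbol{\sigma}^2_{\rm br}$ resp.\ $2\boldsymbol{\sigma}^2_{\rm int}$), bound the expectations via Corollary \ref{corollaryII} \ref{PropC3} with $p=1$ and $p=2$ to promote the compensated Poisson integrals to (locally) square-integrable martingales, then obtain the $\mathbb{L}_2$ estimates by Doob and the $\mathbb{L}_p$ estimates for $p\in(0,2)$ by Jensen. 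The only cosmetic difference is your framing of $M^{\rm br}$, $M^{\rm int}$ as being "read off" \eqref{eq17}; the paper simply defines them directly and defers the identification with the martingale part of \eqref{eq17} (including the passage $r\downarrow0$) to Lemma \ref{lemma13}, which is the cleaner order of development since it avoids any appearance of circularity.
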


\begin{proof}
On the one hand, $x \mathbf{1}_{\{ 0 < y \leq z\}} = 0$, for $z=0$, $y \in \mathbb{N}$ and $x \in \mathbb{N}_{-1}$. On the other hand, $x \mathbf{1}_{\{ y \in \Delta_{z}\}}=0$, for $z\in\{0,1\}$, $y \in \Delta$ and $x \in \mathbb{N}_{-1}$. Then, 
\begin{align}
\int_{0}^{t \wedge t^{\ast}} \int_{\mathbb{N}} \int_{\mathbb{N}_{-1}}  s^{2} x^{2} \mathbf{1}_{\{ 0 < y \leq Z_{s-}^{(\infty)}\}} q_{\rm br}({\rm d} y, {\rm d} x) {\rm d} s & = \boldsymbol{\sigma}^{2}_{{\rm br}} \int_{0}^{t \wedge t^{\ast}} s^{2}Z_{s}^{(\infty)} {\rm d}s 
\end{align}
\noindent and 
\begin{align}
\int_{0}^{t \wedge t^{\ast}} \int_{\Delta} \int_{\mathbb{N}_{-1}}  s^{2} x^{2} \mathbf{1}_{\{ y \in \Delta_{Z_{s-}^{(\infty)}} \}} q_{\rm int}({\rm d} y, {\rm d} x) {\rm d} s & = \boldsymbol{\sigma}^{2}_{{\rm int}} \int_{0}^{t \wedge t^{\ast}} s^{2}Z_{s}^{(\infty)}(Z_{s}^{(\infty)}-1) {\rm d}s, \quad \text{for} \quad t \geq 0.
\end{align}
\noindent By Corollary \ref{corollaryII} \ref{PropC3} and \eqref{Assump4}, for any $0 \leq T <\infty$, there exist constants $0 < C_{1}, C_{2} < \infty$ such that 
\begin{align} \label{eq20}
\mathbb{E} \Big[ (\mathbf{m}_{\rm int})^{2} \int_{0}^{t \wedge t^{\ast}} \int_{\mathbb{N}} \int_{\mathbb{N}_{-1}}  s^{2} x^{2} \mathbf{1}_{\{ 0 < y \leq Z_{s-}^{(\infty)}\}} q_{\rm br}({\rm d} y, {\rm d} x) {\rm d} s  \Big] \leq C_{1} t^{2}
\end{align}
\noindent and
\begin{align} \label{eq20b}
\mathbb{E} \Big[ (\mathbf{m}_{\rm int})^{2} \int_{0}^{t \wedge t^{\ast}} \int_{\Delta} \int_{\mathbb{N}_{-1}}  s^{2} x^{2} \mathbf{1}_{\{ y \in \Delta_{Z_{s-}^{(\infty)}} \}} q_{\rm int}({\rm d} y, {\rm d} x) {\rm d} s \Big] \leq C_{2}t, \quad \text{for} \quad t \leq T.
\end{align}
\noindent It follows from standard properties of Poisson integration \cite[Theorem 8.23]{Peszat2007} that $(M_{t}^{\rm br})_{t \geq 0}$ and $(M_{t}^{\rm int})_{t \geq 0}$ are well-defined locally square-integrable martingales with quadratic variation \eqref{eq18} and \eqref{eq18b}, respectively. For $p = 2$, \eqref{eq19} and \eqref{eq19b} are consequence of \eqref{eq20}, \eqref{eq20b} and Doob's maximal inequality. For $p \in (0,2)$, \eqref{eq19} and \eqref{eq19b} follow by Jensen's inequality. This completes the proof.
\end{proof}

\begin{lemma}  \label{lemma11}
Suppose that $\mathbf{m}_{\rm int}<0$ and \eqref{Assump4} hold. For $p \in (0,2]$ and any $0 \leq T < T(\infty)$, there exists a constant $0 < C(p) <\infty$ such that 
\begin{align}
\mathbb{E} \Big[  \sup_{s \leq t} \Big| -\mathbf{m}_{\rm int}  s Z_{s}^{(\infty)}-1 \Big|^{p} \Big] \leq  C(p) (t^{p/2} \vee t^{p}), \quad \text{for} \quad t \leq T. 
\end{align}
\end{lemma}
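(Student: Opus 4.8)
The plan is to work with the rescaled discrepancy $R_s \coloneqq -\mathbf{m}_{\rm int}sZ_s^{(\infty)}-1$, for which the claim reads $\mathbb{E}[\sup_{s\le t}|R_s|^p]\le C(p)(t^{p/2}\vee t^p)$, and to note two easy reductions: for $p\in(0,2)$ the bound follows from the case $p=2$ by Jensen's inequality, while for $t$ in a fixed interval $[t_0,T]$ it follows from $\mathbb{E}[\sup_{s\le T}|R_s|^p]<\infty$ (Theorem \ref{PropC2} or Corollary \ref{corollaryII}) after enlarging $C(p)$. So it suffices to treat $p=2$ and $t\le t_0$ for a small $t_0>0$. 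Fix $t^{\ast}\in(T,T(\infty))$, so that on $[0,T]$ the martingales $M^{\rm br},M^{\rm int}$ of Lemma \ref{lemma10} are un-stopped, and put $\mathcal{M}\coloneqq M^{\rm br}+M^{\rm int}$, a locally square-integrable martingale with $\langle\mathcal M\rangle_s=\mathbf{m}_{\rm int}^{2}\int_0^{s}u^{2}(\boldsymbol{\sigma}^{2}_{\rm br}Z_u^{(\infty)}+\boldsymbol{\sigma}^{2}_{\rm int}Z_u^{(\infty)}(Z_u^{(\infty)}-1))\,{\rm d}u$ by \eqref{eq18}--\eqref{eq18b}. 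Rewriting \eqref{eq17} via the substitution $-\mathbf{m}_{\rm int}sZ_s^{(\infty)}=1+R_s$, which eliminates $Z^{(\infty)}$ entirely, gives, for $0<r\le s\le T$,
\begin{align}\label{plan-sde}
R_s=R_r-\int_r^{s}\frac{R_u(1+R_u)}{u}\,{\rm d}u+(\mathbf{m}_{\rm br}-\mathbf{m}_{\rm int})\int_r^{s}(1+R_u)\,{\rm d}u+(\mathcal M_s-\mathcal M_r),
\end{align}
and the same substitution rewrites the density of the quadratic variation as ${\rm d}\langle\mathcal M\rangle_s/{\rm d}s=\boldsymbol{\sigma}^{2}_{\rm int}(1+R_s)^{2}+\mathbf{m}_{\rm int}s(1+R_s)(\boldsymbol{\sigma}^{2}_{\rm int}-\boldsymbol{\sigma}^{2}_{\rm br})$. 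The structural point is that this density is $O((1+|R_s|)^{2})$ with \emph{no} $1/s$ blow-up, precisely because $sZ_s^{(\infty)}=\mathbf{m}_{\rm int}^{-1}(1+R_s)$ has bounded moments by Corollary \ref{corollaryII}\ref{PropC3}; note also $R_s\ge-1$ since $Z_s^{(\infty)}\ge0$.

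Next I would establish the pointwise estimate $\mathbb{E}[R_s^{2}]\le C_\flat s$ on $(0,T]$. Applying It\^o's formula to $R_s^{2}$ through \eqref{plan-sde} (localized by $\tau_n\coloneqq\inf\{s:Z_s^{(\infty)}\ge n\}$ to legitimize the computation, then sending $n\to\infty$), the bounded-variation part has drift $-\tfrac{2R_s^{2}(1+R_s)}{s}+2(\mathbf{m}_{\rm br}-\mathbf{m}_{\rm int})R_s(1+R_s)+{\rm d}\langle\mathcal M\rangle_s/{\rm d}s$; since $R_s\ge-1$ the first term is $\le0$, and the expectations of the other two are bounded uniformly on $(0,T]$, because Corollary \ref{corollaryII}\ref{PropC3} (with $p=2$) controls both $\mathbb{E}[R_s^{2}]$ and $\mathbb{E}[(1+R_s)^{2}]=\mathbf{m}_{\rm int}^{2}\mathbb{E}[(sZ_s^{(\infty)})^{2}]$. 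Hence $\mathbb{E}[R_s^{2}]\le\mathbb{E}[R_r^{2}]+C_1(s-r)$, and letting $r\downarrow0$ while invoking Theorem \ref{PropC2} with $p=2$ to send $\mathbb{E}[R_r^{2}]\le\mathbb{E}[\sup_{u\le r}R_u^{2}]\to0$ yields $\mathbb{E}[R_s^{2}]\le C_\flat s$.

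Finally I would bootstrap this into the supremum bound. From the same It\^o decomposition, $R_u^{2}\le\int_0^{u}\big(2(\mathbf{m}_{\rm br}-\mathbf{m}_{\rm int})R_v(1+R_v)+{\rm d}\langle\mathcal M\rangle_v/{\rm d}v\big)\,{\rm d}v+\widetilde{\mathcal M}_u$, where $\widetilde{\mathcal M}$ is the martingale part of $R^{2}$ and the negative term has been dropped. The integrand is $\le C(1+\sup_{w\le t}R_w^{2})$ for $v\le t$, so for $t\le t_0$ (with $Ct_0\le 1/2$) one gets $\sup_{u\le t}R_u^{2}\le2Ct+2\sup_{u\le t}|\widetilde{\mathcal M}_u|$. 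For the martingale term, Burkholder--Davis--Gundy gives $\mathbb{E}[\sup_{u\le t}|\widetilde{\mathcal M}_u|]\lesssim\mathbb{E}[[\widetilde{\mathcal M}]_t^{1/2}]$, and since $\Delta\widetilde{\mathcal M}_s=\Delta\mathcal M_s(2R_{s-}+\Delta\mathcal M_s)$ we have $[\widetilde{\mathcal M}]_t\le8\int_0^{t}R_{v-}^{2}\,{\rm d}[\mathcal M]_v+2\sum_{v\le t}(\Delta\mathcal M_v)^{4}$; the first summand satisfies $\mathbb{E}[(\cdot)^{1/2}]\le\mathbb{E}[\sup_{w\le t}R_w^{2}]^{1/2}\,\mathbb{E}[\langle\mathcal M\rangle_t]^{1/2}$ with $\mathbb{E}[\langle\mathcal M\rangle_t]\le C_\natural t$ (using the pointwise step and Corollary \ref{corollaryII} to bound $\mathbb{E}[(1+R_v)^{2}]$), and the second is $O(t^{3/2})$ because \eqref{Assump4} forces $\sum_i i^{4}\pi_i<\infty$ and $\sum_i i^{4}b_i<\infty$, whence $\mathbb{E}[\sum_{v\le t}(\Delta\mathcal M_v)^{4}]\lesssim\int_0^{t}v^{4}(\mathbb{E}[Z_v^{(\infty)}]+\mathbb{E}[(Z_v^{(\infty)})^{2}])\,{\rm d}v\lesssim t^{3}$. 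Writing $x\coloneqq\mathbb{E}[\sup_{u\le t}R_u^{2}]$ (finite by Theorem \ref{PropC2}), these estimates combine to $x\le a\,t+b\,t^{1/2}x^{1/2}+c\,t^{3/2}$ for $t\le t_0$; solving the resulting quadratic in $x^{1/2}$ gives $x\le C_\sharp t$, hence $\mathbb{E}[\sup_{u\le t}|R_u|^{p}]\le x^{p/2}\le C_\sharp^{p/2}t^{p/2}$ for every $p\in(0,2]$, which with the reductions above and the identity $-\mathbf{m}_{\rm int}sZ_s^{(\infty)}-1=R_s$ completes the proof.

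I expect the main obstacles to be: (i) justifying the two passages $r\downarrow0$ — the boundary terms $R_r,\mathcal M_r$ vanish by Theorem \ref{PropC2} and right-continuity at $0$, but the interchange of $\mathbb{E}$, $\lim_{r\downarrow0}$ and the ${\rm d}u$-integral requires the localization $\tau_n$ together with the $\mathbb{L}_p$-bounds of Corollary \ref{corollaryII}; (ii) the algebraic bookkeeping certifying that ${\rm d}\langle\mathcal M\rangle_s/{\rm d}s$ and $\sum(\Delta\mathcal M_s)^{4}$ carry no $1/s$ singularity — this is exactly where the boundedness of the moments of $sZ_s^{(\infty)}$ enters and is what pins the exponent at $p/2$; and (iii) the self-referential inequality for $\sup_{u\le t}R_u^{2}$, which is needed because a naive Gr\"onwall argument fails on account of the $1/s$ kernel in the drift of $R$ in \eqref{plan-sde}.
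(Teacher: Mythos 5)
Your proposal is correct but takes a genuinely different route from the paper. The paper starts from the same decomposition of the drift in \eqref{eq17}, writes the quadratic term as $-\int_r^t g(s)\,R_s\,{\rm d}s$ with $g(s) = -\mathbf{m}_{\rm int}Z_s^{(\infty)}\ge 0$ and $R_s = -\mathbf{m}_{\rm int}sZ_s^{(\infty)}-1$, and then invokes the deterministic comparison lemma of \cite{Bere2010} (their Lemma~10) to obtain the \emph{pathwise} bound
\begin{align*}
\sup_{r\le s\le t}|R_s| \le 2\Big(|R_r| + |M_r| + \sup_{r\le s\le t}|M_s| + \int_0^t\big|(\mathbf{m}_{\rm int})^2 - \mathbf{m}_{\rm int}\mathbf{m}_{\rm br}\big|\,sZ_s^{(\infty)}\,{\rm d}s\Big),
\end{align*}
from which the claim follows by taking $\mathbb{L}_p$-norms, using Corollary~\ref{corollaryII}\ref{PropC3}, \eqref{eq19}--\eqref{eq19b}, and letting $r\downarrow 0$ via Theorem~\ref{PropC2}. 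You instead exploit the same dissipativity (through $R_s\ge -1$, making $-2R_s^2(1+R_s)/s\le 0$) but at the level of It\^o's formula for $R_s^2$, first deriving the pointwise estimate $\mathbb{E}[R_s^2]\le C s$ and then bootstrapping to the supremum via Burkholder--Davis--Gundy and a self-referential quadratic inequality. Both routes rest on the same underlying moment estimates from Corollary~\ref{corollaryII} and Lemma~\ref{lemma10}; yours replaces the external comparison lemma with an explicit second-moment computation plus BDG, at the cost of also needing fourth moments of the jumps of $\mathcal{M}$ (which \eqref{Assump4} supplies) and a more involved absorption argument. The paper's route is shorter and stays at the level of a first-moment pathwise comparison, and is stylistically aligned with the later proofs of Lemmas~\ref{lemma12} and~\ref{lemma14}, which reuse the same comparison lemma; your route is self-contained and makes the source of the $t^{p/2}$-scaling (namely $\mathbb{E}[\langle\mathcal{M}\rangle_t] = O(t)$) more transparent. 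Both are valid.
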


\begin{proof}
We first note that, for $0 < r \leq t < T(\infty)$,
\begin{align}
- \int_{r}^{t} (\mathbf{m}_{\rm int} \mathbf{m}_{\rm br}sZ_{s}^{(\infty)} + \mathbf{m}_{\rm int} Z_{s}^{(\infty)} + (\mathbf{m}_{\rm int})^{2}s Z_{s}^{(\infty)} (Z_{s}^{(\infty)}-1) ) {\rm d} s 
\end{align}
\noindent can be written as
\begin{align}
- \int_{r}^{t} s^{-1}(- \mathbf{m}_{\rm int}sZ_{s}^{(\infty)})(- \mathbf{m}_{\rm int}sZ_{s}^{(\infty)}-1){\rm d} s  +\int_{r}^{t} (\mathbf{m}_{\rm int})^{2} s Z_{s}^{(\infty)} {\rm d} s  - \int_{r}^{t}  \mathbf{m}_{\rm int} \mathbf{m}_{\rm br} sZ_{s}^{(\infty)}{\rm d}s.
\end{align}
\noindent Fix $0 \leq T < T(\infty)$. Let $M_{t} = M_{t}^{\rm br} + M_{t}^{\rm int}$, for $t \geq 0$, where $(M_{t}^{\rm br})_{t \geq 0}$ and $(M_{t}^{\rm int})_{t \geq 0}$ are the martingales defined in \eqref{eq31} and \eqref{eq31b}, respectively, such that $T \leq t^{\ast} < T(\infty)$. By \eqref{eq17} and Lemma 10 in \cite{Bere2010} (with $g(s) = - \mathbf{m}_{\rm int}Z_{s}^{(\infty)}(- \mathbf{m}_{\rm int}sZ_{s}^{(\infty)}-1)$), we have that
\begin{align}
& \sup_{r \leq s \leq t} \left|- \mathbf{m}_{\rm int}sZ_{s}^{(\infty)}-1\right| \nonumber \\
& \quad \quad \quad \leq 2 \Big( \left|- \mathbf{m}_{\rm int}rZ_{r}^{(\infty)}-1\right| + |M_{r}| + \sup_{r \leq s \leq t} |M_{s}|  +\int_{0}^{t} | (\mathbf{m}_{\rm int})^{2} - \mathbf{m}_{\rm int} \mathbf{m}_{\rm br}|  s Z_{s}^{(\infty)} {\rm d} s \Big), 
\end{align}
\noindent for $r \leq t \leq T$. Then, Corollary \ref{corollaryII} \ref{PropC3}, \eqref{eq19} and \eqref{eq19b} imply that there are constants $0 < C_{1}(p), C_{2}(p) < \infty$ such that
\begin{align}
& \mathbb{E}\Big[\sup_{r \leq s \leq t} \left|- \mathbf{m}_{\rm int}sZ_{s}^{(\infty)}-1\right|^{p} \Big] \nonumber \\
& \quad \quad \quad \leq 2 \cdot 4^{p} \Big ( \mathbb{E}\Big[ \left|- \mathbf{m}_{\rm int}rZ_{r}^{(\infty)}-1\right|^{p} \Big] + \mathbb{E}[|M_{r}|^{p}] + C_{1}(p) t^{p/2} + C_{2}(p) t^{p} \Big). 
\end{align}
\noindent Our claim follows by letting $r \downarrow 0$, and using Theorem \ref{PropC2} and  the inequalities \eqref{eq19} and \eqref{eq19b}.
\end{proof}

\begin{lemma} \label{lemma12}
Suppose that $\mathbf{m}_{\rm int}<0$ and \eqref{Assump4} hold. Fix $0 < t^{\ast} < T(\infty)$. For all $t \geq 0$, the integral
\begin{align}
A_{t} = - \int_{0}^{t \wedge t^{\ast}} \Big(\mathbf{m}_{\rm int} \mathbf{m}_{\rm br}sZ_{s}^{(\infty)} + \mathbf{m}_{\rm int} Z_{s}^{(\infty)} + (\mathbf{m}_{\rm int})^{2}s Z_{s}^{(\infty)} (Z_{s}^{(\infty)}-1)\Big) {\rm d} s
\end{align}
\noindent is a well-defined Lebesgue integral, almost surely. Moreover,
\begin{align}
A_{t} = - \int_{0}^{t\wedge t^{\ast}} s^{-1}\Big(-s\mathbf{m}_{\rm int}Z_{s}^{(\infty)} - 1 \Big) {\rm d} s  + U_{t}, \quad \text{for} \quad t \geq 0,
\end{align}
\noindent where $(U_{t})_{t \geq 0}$ is a continuous stochastic process that satisfies the following: for any $0 \leq T <\infty$, there exists a constant $0 < C <\infty$ such that 
\begin{align} \label{eq26}
\mathbb{E}\Big[  \sup_{s \leq t} | U_{s} | \Big] \leq  C (t \vee t^{2}), \quad \text{for} \quad t \leq T.
\end{align}
\end{lemma}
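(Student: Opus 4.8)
The plan is to isolate, inside the integrand defining $A_t$, the ``principal part'' $-s^{-1}(-\mathbf{m}_{\rm int}sZ_s^{(\infty)}-1)$ and to show that the leftover is integrable all the way down to $s=0$, using the $\mathbb{L}_p$-rates from Lemma \ref{lemma11} and the uniform bound from Corollary \ref{corollaryII}\ref{PropC3}. Concretely, I would first record the elementary algebraic identity already exploited over $[r,t]$ in the proof of Lemma \ref{lemma11}: writing $a_s \coloneqq -\mathbf{m}_{\rm int}sZ_s^{(\infty)}$ and using $a_s(a_s-1)=(a_s-1)^2+(a_s-1)$, one has, for $s>0$,
\begin{align*}
&-\big(\mathbf{m}_{\rm int}\mathbf{m}_{\rm br}sZ_s^{(\infty)} + \mathbf{m}_{\rm int}Z_s^{(\infty)} + (\mathbf{m}_{\rm int})^2 sZ_s^{(\infty)}(Z_s^{(\infty)}-1)\big) \\
&\qquad = -s^{-1}\big(a_s-1\big) - s^{-1}\big(a_s-1\big)^2 + (\mathbf{m}_{\rm int})^2 sZ_s^{(\infty)} - \mathbf{m}_{\rm int}\mathbf{m}_{\rm br}sZ_s^{(\infty)}.
\end{align*}
This immediately identifies the candidate
\begin{align*}
U_t = -\int_0^{t\wedge t^{\ast}}s^{-1}\big(a_s-1\big)^2{\rm d}s + \int_0^{t\wedge t^{\ast}}(\mathbf{m}_{\rm int})^2 sZ_s^{(\infty)}{\rm d}s - \int_0^{t\wedge t^{\ast}}\mathbf{m}_{\rm int}\mathbf{m}_{\rm br}sZ_s^{(\infty)}{\rm d}s,
\end{align*}
and reduces the whole statement to integrability estimates for the three summands of $U$ together with the principal term $s^{-1}(a_s-1)$.

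Next I would obtain these estimates by taking expectations and applying Tonelli's theorem. For the principal term, Lemma \ref{lemma11} with $p=1$ gives $\mathbb{E}[\,|a_s-1|\,]\le C(s^{1/2}\vee s)$, so $\int_0^{t\wedge t^{\ast}}s^{-1}\mathbb{E}[|a_s-1|]{\rm d}s=\int_0^{t\wedge t^{\ast}}(s^{-1/2}\vee 1)C\,{\rm d}s<\infty$. For the quadratic leftover, Lemma \ref{lemma11} with $p=2$ gives $\mathbb{E}[(a_s-1)^2]\le C(s\vee s^2)$, so $\int_0^{t\wedge t^{\ast}}s^{-1}\mathbb{E}[(a_s-1)^2]{\rm d}s\le C\int_0^{t}(1\vee s){\rm d}s\le C(t\vee t^2)$. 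For the linear terms, Corollary \ref{corollaryII}\ref{PropC3} with $p=1$ gives $\mathbb{E}[sZ_s^{(\infty)}]=(-\mathbf{m}_{\rm int})^{-1}\mathbb{E}[a_s]\le C$ uniformly on $[0,T]$, whence $\int_0^{t\wedge t^{\ast}}\mathbb{E}[sZ_s^{(\infty)}]{\rm d}s\le Ct$. In particular, by the identity and the triangle inequality, the integrand in the definition of $A_t$ is a.s.\ integrable on $[0,t^{\ast}]$, so $A_t$ is a well-defined (absolutely convergent) Lebesgue integral and the decomposition $A_t=-\int_0^{t\wedge t^{\ast}}s^{-1}(a_s-1){\rm d}s+U_t$ holds almost surely; moreover each of the three processes forming $U$ is an indefinite Lebesgue integral of an a.s.\ locally integrable function, hence continuous with value $0$ at $t=0$, so $(U_t)_{t\ge0}$ is continuous.

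Finally I would bound $\mathbb{E}[\sup_{s\le t}|U_s|]$ using monotonicity of the three components: the first summand of $U_s$ is nonincreasing in $s$, the second is nondecreasing, and the third is dominated in modulus by the nondecreasing $\int_0^{s\wedge t^{\ast}}|\mathbf{m}_{\rm int}\mathbf{m}_{\rm br}|\,uZ_u^{(\infty)}{\rm d}u$; hence
\begin{align*}
\sup_{s\le t}|U_s|\le \int_0^{t\wedge t^{\ast}}s^{-1}(a_s-1)^2{\rm d}s + \big((\mathbf{m}_{\rm int})^2+|\mathbf{m}_{\rm int}\mathbf{m}_{\rm br}|\big)\int_0^{t\wedge t^{\ast}}sZ_s^{(\infty)}{\rm d}s,
\end{align*}
and inserting the two estimates above yields $\mathbb{E}[\sup_{s\le t}|U_s|]\le C(t\vee t^2)$ for $t\le T$, which is \eqref{eq26}. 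The only genuinely delicate point is the behaviour at $s=0$: the two ``naive'' terms $(\mathbf{m}_{\rm int})^2 s(Z_s^{(\infty)})^2$ and $-\mathbf{m}_{\rm int}Z_s^{(\infty)}$ are each of order $s^{-1}$ and non-integrable, and it is exactly the regrouping above, combined with the sharp $\mathbb{L}_2$-rate $(a_s-1)^2=O(s)$ from Lemma \ref{lemma11}, that turns the remainder into an integrable function; the rest is routine bookkeeping with the moment bounds.
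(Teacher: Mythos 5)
Your proposal is correct and follows essentially the same route as the paper: you arrive at exactly the decomposition of the integrand in \eqref{eq21} (via $a_s(a_s-1)=(a_s-1)^2+(a_s-1)$), use the same ingredients — Lemma \ref{lemma11} with $p=1,2$ and Corollary \ref{corollaryII}\ref{PropC3} — and obtain the same $U_t$ and the bound \eqref{eq26} by the same monotonicity observations. The paper compresses the bookkeeping into "readily imply", whereas you spell out the Tonelli and monotonicity steps explicitly; the content is the same.
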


\begin{proof}
For $t \geq 0$, 
\begin{align} \label{eq21}
A_{t} & = - \int_{0}^{t \wedge t^{\ast}} s^{-1}(- \mathbf{m}_{\rm int}sZ_{s}^{(\infty)}-1)^{2}{\rm d} s  - \int_{0}^{t \wedge t^{\ast}} s^{-1}(- \mathbf{m}_{\rm int}sZ_{s}^{(\infty)}-1) {\rm d} s \nonumber \\
& \quad \quad \quad +\int_{0}^{t \wedge t^{\ast}} (\mathbf{m}_{\rm int})^{2} s Z_{s}^{(\infty)} {\rm d} s  - \int_{0}^{t \wedge t^{\ast}}  \mathbf{m}_{\rm int} \mathbf{m}_{\rm br} sZ_{s}^{(\infty)}{\rm d}s.
\end{align}
\noindent Corollary \ref{corollaryII} \ref{PropC3} and Lemma \ref{lemma11} readily imply that each of the integrals in \eqref{eq21} is a well-defined Lebesgue integral for all $t\geq 0$ simultaneously, almost surely. This proves the first claim. 

By letting,
\begin{align}
U_{t} & = - \int_{0}^{t\wedge t^{\ast}} s^{-1}(- \mathbf{m}_{\rm int}sZ_{s}^{(\infty)}-1)^{2}{\rm d} s +\int_{0}^{t \wedge t^{\ast}} (\mathbf{m}_{\rm int})^{2} s Z_{s}^{(\infty)} {\rm d} s   - \int_{0}^{t \wedge t^{\ast}}  \mathbf{m}_{\rm int} \mathbf{m}_{\rm br} sZ_{s}^{(\infty)}{\rm d}s,
\end{align}
\noindent for $t \geq 0$, we deduce \eqref{eq26} by using Corollary \ref{corollaryII} \ref{PropC3} and Lemma \ref{lemma11}. This completes the proof.
\end{proof}

\begin{lemma}  \label{lemma13}
Suppose that  $\mathbf{m}_{\rm int}<0$ and \eqref{Assump4} hold. Let $(M_{t}^{\rm br})_{t \geq 0}$, $(M_{t}^{\rm int})_{t \geq 0}$  and $(U_{t})_{t \geq 0}$ be the processes defined in Lemma \ref{lemma10} and Lemma \ref{lemma12}, respectively, for some $0 < t^{\ast} < T(\infty)$. Then,
\begin{align}  \label{eq22}
- \mathbf{m}_{\rm int} t Z_{t}^{(\infty)} & = 1  - \int_{0}^{t} s^{-1}\Big(-s\mathbf{m}_{\rm int}Z_{s}^{(\infty)} - 1 \Big) {\rm d} s + M_{t}^{\rm br} + M_{t}^{\rm int} + U_{t}, \quad 0 \leq t \leq t^{\ast}.
\end{align}
\end{lemma}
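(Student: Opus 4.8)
The plan is to obtain \eqref{eq22} by letting $r\downarrow 0$ in the identity \eqref{eq17} of Lemma \ref{lemma9}, which holds for all $0<r\le t<T(\infty)$. Fix $0<t\le t^{\ast}$. The left-hand side $-\mathbf{m}_{\rm int}tZ_t^{(\infty)}$ does not depend on $r$, so it suffices to identify the $r\downarrow 0$ limit of each of the three terms on the right-hand side of \eqref{eq17}: the boundary term $-\mathbf{m}_{\rm int}rZ_r^{(\infty)}$, the drift $-\int_r^t(\mathbf{m}_{\rm int}\mathbf{m}_{\rm br}sZ_s^{(\infty)}+\mathbf{m}_{\rm int}Z_s^{(\infty)}+(\mathbf{m}_{\rm int})^2 sZ_s^{(\infty)}(Z_s^{(\infty)}-1)){\rm d}s$, and the stochastic integral $-\mathbf{m}_{\rm int}\int_{(r,t]}\int_{\mathcal{X}}sH(Z_{s-}^{(\infty)},u,y,x)\tilde N$.

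For the boundary term, Theorem \ref{Main1} (in the form \eqref{MainConv}, equivalently \eqref{Pro2} together with Lemma \ref{lemma6}) gives $-\mathbf{m}_{\rm int}rZ_r^{(\infty)}\to 1$ almost surely as $r\downarrow 0$. For the drift, its integrand is exactly the one integrated in the definition of $A_t$ in Lemma \ref{lemma12}; since that lemma asserts $A_t$ is an absolutely convergent Lebesgue integral almost surely and $t\le t^{\ast}$, letting $r\downarrow 0$ in $-\int_r^t(\cdots){\rm d}s=A_t+\int_0^r(\cdots){\rm d}s$ yields $-\int_r^t(\cdots){\rm d}s\to A_t$ a.s., and Lemma \ref{lemma12} rewrites this limit as $-\int_0^t s^{-1}(-s\mathbf{m}_{\rm int}Z_s^{(\infty)}-1){\rm d}s+U_t$. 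The delicate point here is that one must \emph{not} pass to the limit term by term in the original form of the integrand: the contributions $\mathbf{m}_{\rm int}Z_s^{(\infty)}$ and $(\mathbf{m}_{\rm int})^2 sZ_s^{(\infty)}(Z_s^{(\infty)}-1)$ are individually of the nonintegrable order $s^{-1}$ near $0$ (since $sZ_s^{(\infty)}\to(-\mathbf{m}_{\rm int})^{-1}$), and only the regrouped combination carried out in Lemma \ref{lemma12} is integrable. Finally, the stochastic integral splits, via \eqref{eq2I} and \eqref{funcH}, into the branching part $-\mathbf{m}_{\rm int}\int_{(r,t]}\int_{\mathbb{N}}\int_{\mathbb{N}_{-1}}sx\,\mathbf{1}_{\{0<y\le Z_{s-}^{(\infty)}\}}\tilde N_{\rm br}$ and the interaction part $-\mathbf{m}_{\rm int}\int_{(r,t]}\int_{\Delta}\int_{\mathbb{N}_{-1}}sx\,\mathbf{1}_{\{y\in\Delta_{Z_{s-}^{(\infty)}}\}}\tilde N_{\rm int}$; by Lemma \ref{lemma10} (and the bounds \eqref{eq20}, \eqref{eq20b}) these coincide, for $t\le t^{\ast}$, with the $\mathbb{L}^2$-martingales $M^{\rm br}$ and $M^{\rm int}$ on $[0,t^{\ast}]$ up to their increments over $(0,r]$, whose second moments are bounded by $C_1 r^2$ and $C_2 r$ respectively, so the two parts converge in $\mathbb{L}^2$ (hence in probability) to $M_t^{\rm br}$ and $M_t^{\rm int}$ as $r\downarrow 0$.

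Combining the three limits in \eqref{eq17} — passing to a subsequence $r_n\downarrow 0$ if one wishes all the convergences to hold simultaneously almost surely — yields $-\mathbf{m}_{\rm int}tZ_t^{(\infty)}=1+A_t+M_t^{\rm br}+M_t^{\rm int}$, which is \eqref{eq22} for $0<t\le t^{\ast}$ by Lemma \ref{lemma12}; at $t=0$ the identity holds trivially since all the integrals vanish and $M_0^{\rm br}=M_0^{\rm int}=U_0=0$, the left-hand side being read as $1$ in accordance with \eqref{MainConv}. I expect the only genuine obstacle to be the regrouping subtlety for the drift together with careful bookkeeping of the modes of convergence; everything else is assembly of Lemmas \ref{lemma8}--\ref{lemma12}.
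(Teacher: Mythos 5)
Your proposal is correct and follows essentially the same route as the paper: start from identity \eqref{eq17} of Lemma \ref{lemma9}, decompose the compensated stochastic integral into its branching and interaction parts to recognise $M_t^{\rm br}-M_r^{\rm br}$ and $M_t^{\rm int}-M_r^{\rm int}$, rewrite the drift via Lemma \ref{lemma12}, and pass to the limit $r\downarrow 0$ termwise. The only cosmetic differences are in the modes of convergence used (the paper uses $L^2$ convergence from Theorem \ref{PropC2} for the boundary term and $L^1$ for $A_r$, whereas you mix almost-sure convergence from \eqref{MainConv} with $L^2$ for the martingale parts — both yield the needed convergence in probability) and in how one upgrades the fixed-$t$ identity to a simultaneous-in-$t$ identity: you gesture at a subsequence argument, while the paper invokes right-continuity of both sides, which is the cleaner way to get agreement for all $t\in[0,t^*]$ at once.
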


\begin{proof}
By Lemma \ref{lemma10} and Lemma \ref{lemma12}, the process introduced in \eqref{eq22} is well-defined. On the other hand, by \eqref{eq17}, Lemma \ref{lemma10} and Lemma \ref{lemma12}, we have that, for $0 < r \leq t \leq t^{\ast}$, 
\begin{align}
- \mathbf{m}_{\rm int} t Z_{t}^{(\infty)} = - \mathbf{m}_{\rm int} r Z_{r}^{(\infty)} + (A_{t}-A_{r}) + (M_{t}^{\rm br}- M_{r}^{\rm br}) + (M_{t}^{\rm int}- M_{r}^{\rm int}). 
\end{align}
\noindent First, Theorem \ref{PropC2} implies that $- \mathbf{m}_{\rm int} r Z_{r}^{(\infty)}$ converges to $1$ in $L_{2}$, as $r \downarrow 0$. Second, by \eqref{eq19} and \eqref{eq19b}, we have that $M_{r}^{\rm br}$ and $M_{r}^{\rm int}$ converge to $0$ in $L_{2}$, as $r \downarrow 0$. Finally, \eqref{eq21}, Corollary \ref{corollaryII} \ref{PropC3} and Lemma \ref{lemma11} imply that there is a constant $0 < C <\infty$ such that 
\begin{align}
\mathbb{E}[ |A_{r}|] \leq C (r^{1/2} \vee r \vee r^{2}),
\end{align}
which implies that $A_{r}$ converges to $0$ in $L_{1}$, as $r \downarrow 0$. The above implies that for any fixed $0 < t \leq t^{\ast}$, as $r \downarrow 0$, both the left-hand side and the right-hand side of \eqref{eq17} converge in probability to the corresponding left and right hand side of \eqref{eq22}. Due to the c\`adl\`ag property of all the processes under consideration, identity \eqref{eq22} holds for all $0 \leq t \leq t^{\ast}$ simultaneously.
\end{proof}

Henceforth, we fix $0 < t^{\ast}< \infty$. Let $(D_{t})_{t \geq 0}$ be the stochastic process given by
\begin{align}
D_{t} := - \int_{0}^{t} s {\rm d} M_{s}^{\rm int}, \quad \text{for} \quad t \geq 0.  
\end{align}
\noindent Clearly, $(D_{t})_{t \geq 0}$ is well-defined. In particular, by \eqref{eq31b},
\begin{align}
D_{t} =  \mathbf{m}_{\rm int}  \int_{0}^{t \wedge t^{\ast}} \int_{\Delta} \int_{\mathbb{N}_{-1}}  s^{2} x \mathbf{1}_{\{ y \in \Delta_{Z_{s-}^{(\infty)}} \}} \tilde{N}_{\rm int}({\rm d} s, {\rm d} y, {\rm d} x), \quad \text{for} \quad t \geq 0. 
\end{align} 
\noindent For $k \in \mathbb{R}_{+}$, define $(M_{k, t}^{\rm int})_{t \geq 0}$ by letting
\begin{align}
M_{k, t}^{\rm int}:= - \mathbf{m}_{\rm int}  \int_{0}^{t \wedge t^{\ast}} \int_{\Delta} \int_{\mathbb{N}_{-1}}  s x \mathbf{1}_{\{|x| \leq k \}} \mathbf{1}_{\{ y \in \Delta_{Z_{s-}^{(\infty)}} \}} \tilde{N}_{\rm int}({\rm d} s, {\rm d} y, {\rm d} x), \quad \text{for} \quad t \geq 0.
\end{align}
\noindent Note that $(M_{k, t}^{\rm int})_{t \geq 0}$ is a well-defined locally square-integrable martingale (see e.g., the argument used in the proof of Lemma \ref{lemma10}). Then, define the stochastic process $(D_{k, t})_{t \geq 0}$ as follows
\begin{align} \label{eq32}
D_{k, t} := - \int_{0}^{t } s {\rm d} M_{k, t}^{\rm int} = \mathbf{m}_{\rm int}  \int_{0}^{t \wedge t^{\ast}} \int_{\Delta} \int_{\mathbb{N}_{-1}}  s^{2} x \mathbf{1}_{\{|x| \leq k \}} \mathbf{1}_{\{ y \in \Delta_{Z_{s-}^{(\infty)}} \}} \tilde{N}_{\rm int}({\rm d} s, {\rm d} y, {\rm d} x), \quad \text{for} \quad t \geq 0.  
\end{align}

\begin{lemma} \label{lemma15}
Suppose that $\mathbf{m}_{\rm int}<0$ and \eqref{Assump4} hold. For $k \geq 1$ and $0 \leq T < \infty$, there exists a constant $0 < C < \infty$ such that 
\begin{align}
\mathbb{E} \Big[ \sup_{s \leq t} (D_{s}- D_{k, s})^{2} \Big] \leq C \boldsymbol{\sigma}^{2}_{{\rm int}, k} t^{3}, \quad \text{for} \quad t \leq T,
\end{align}
\noindent where $\boldsymbol{\sigma}^{2}_{{\rm int},k} \coloneqq \sum_{i > \lfloor k \rfloor} i^{2}b_{i}$.
\end{lemma}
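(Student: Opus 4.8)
The plan is to identify $D - D_{k}$ as a purely discontinuous compensated Poisson stochastic integral retaining only the large jumps, compute its predictable quadratic variation, and then invoke Doob's $L^{2}$ inequality together with the second moment estimate of Corollary \ref{corollaryII} \ref{PropC3}. This is the same strategy used throughout Lemma \ref{lemma10} and Lemma \ref{lemma11}.

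First I would subtract \eqref{eq32} from the definition of $D_{t}$ and record that, since $k \ge 1$, the indicator $\mathbf{1}_{\{|x|>k\}}$ annihilates the atoms $x \in \{-1,0\}$ of $\nu_{\rm int}$, so that
\begin{align*}
D_{t} - D_{k,t} = \mathbf{m}_{\rm int} \int_{0}^{t \wedge t^{\ast}} \int_{\Delta} \int_{\mathbb{N}_{-1}}  s^{2} x \, \mathbf{1}_{\{|x| > k\}} \mathbf{1}_{\{ y \in \Delta_{Z_{s-}^{(\infty)}} \}} \, \tilde{N}_{\rm int}({\rm d} s, {\rm d} y, {\rm d} x), \qquad t \ge 0 ,
\end{align*}
and that $\int_{\mathbb{N}_{-1}} x^{2} \mathbf{1}_{\{|x|>k\}} \nu_{\rm int}({\rm d}x) = 2 \sum_{i > \lfloor k \rfloor} i^{2} b_{i} = 2\boldsymbol{\sigma}^{2}_{{\rm int},k}$. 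Using that $\Delta_{z}$ has $z(z-1)/2$ elements, the expectation of the integrated square of the integrand over $[0, t\wedge t^{\ast}]$ equals $(\mathbf{m}_{\rm int})^{2}\boldsymbol{\sigma}^{2}_{{\rm int},k}\int_{0}^{t\wedge t^{\ast}} s^{4}\,\mathbb{E}[Z_{s}^{(\infty)}(Z_{s}^{(\infty)}-1)]\,{\rm d}s$. By Corollary \ref{corollaryII} \ref{PropC3} applied with $p=2$ and $T=t^{\ast}$, there is a constant $C(2)$ with $(\mathbf{m}_{\rm int})^{2} s^{2}\,\mathbb{E}[(Z_{s}^{(\infty)})^{2}] \le C(2)$ for all $0 \le s \le t^{\ast}$, whence $s^{4}\,\mathbb{E}[Z_{s}^{(\infty)}(Z_{s}^{(\infty)}-1)] \le s^{4}\,\mathbb{E}[(Z_{s}^{(\infty)})^{2}] \le C(2)(\mathbf{m}_{\rm int})^{-2} s^{2}$, so the displayed expectation is at most $C(2)\boldsymbol{\sigma}^{2}_{{\rm int},k}\,t^{3}/3 < \infty$.

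This finiteness upgrades $(D_{t}-D_{k,t})_{0 \le t \le t^{\ast}}$ from a local to a genuine square-integrable martingale, with predictable quadratic variation $\langle D - D_{k}\rangle_{t} = (\mathbf{m}_{\rm int})^{2}\boldsymbol{\sigma}^{2}_{{\rm int},k}\int_{0}^{t\wedge t^{\ast}} s^{4} Z_{s}^{(\infty)}(Z_{s}^{(\infty)}-1)\,{\rm d}s$, so the $L^{2}$-isometry gives $\mathbb{E}[(D_{t}-D_{k,t})^{2}] = \mathbb{E}[\langle D-D_{k}\rangle_{t}] \le C(2)\boldsymbol{\sigma}^{2}_{{\rm int},k}\,t^{3}/3$. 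Doob's maximal inequality then yields $\mathbb{E}[\sup_{s \le t}(D_{s}-D_{k,s})^{2}] \le 4\,\mathbb{E}[(D_{t}-D_{k,t})^{2}] \le \tfrac{4}{3}C(2)\boldsymbol{\sigma}^{2}_{{\rm int},k}\,t^{3}$, which is the claim with $C = \tfrac{4}{3}C(2)$. Every step here is routine manipulation of Poisson integrals; the only points needing a little attention are checking that $k \ge 1$ removes the competition atom at $x=-1$ from the tail (so the correct constant is $\boldsymbol{\sigma}^{2}_{{\rm int},k} = \sum_{i>\lfloor k\rfloor} i^{2} b_{i}$ rather than including an extra $c$) and justifying the passage from a local to a true $L^{2}$-martingale — both handled by Corollary \ref{corollaryII} \ref{PropC3} — so I do not anticipate a genuine obstacle here.
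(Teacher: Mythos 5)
Your proof is correct and follows exactly the paper's route: express $D-D_{k}$ as a compensated Poisson integral over the large-jump region $\{|x|>k\}$, read off its predictable quadratic variation $\langle D-D_{k}\rangle_{t}=(\mathbf{m}_{\rm int})^{2}\boldsymbol{\sigma}^{2}_{{\rm int},k}\int_{0}^{t\wedge t^{\ast}}s^{4}Z_{s}^{(\infty)}(Z_{s}^{(\infty)}-1)\,{\rm d}s$, bound its expectation via Corollary \ref{corollaryII} \ref{PropC3}, and finish with Doob's maximal inequality. The only addition you make is to spell out why the constant is $\boldsymbol{\sigma}^{2}_{{\rm int},k}$ (with $k\ge 1$ removing the atom of $\nu_{\rm int}$ at $-1$), a detail the paper leaves implicit.
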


\begin{proof}
Observe that $(D_{k, t}-D_{t})_{t \geq 0}$ is a locally square-integrable martingale with predictable quadratic variation $(\langle D_{k, \cdot}-D_{\cdot} \rangle_{t})_{t \geq 0}$ given by
\begin{align} 
\langle D_{k, \cdot}-D_{\cdot} \rangle_{t} = (\mathbf{m}_{\rm int})^{2} \boldsymbol{\sigma}^{2}_{{\rm int}, k}   \int_{0}^{t \wedge t^{\ast}} s^{4} Z_{s}^{(\infty)} \left(Z_{s}^{(\infty)}-1\right) {\rm d} s, \quad \text{for} \quad t \geq 0.
\end{align}
\noindent Then, for any $0 \leq T < \infty$, Corollary \ref{corollaryII} \ref{PropC3} implies that there exists a constant $0 < C < \infty$ such that 
\begin{align}
\mathbb{E}[ \langle D_{k, \cdot}-D_{\cdot} \rangle_{t} ] & \leq C  \boldsymbol{\sigma}^{2}_{{\rm int}, k} t^{3}, \quad \text{for} \quad t \leq T.
\end{align}
\noindent Finally, our claim follows by Doob's maximal inequality. 
\end{proof}

\begin{proposition} \label{prop3}
Suppose that $ \mathbf{m}_{\rm int}<0$ and \eqref{Assump4} hold. Let $\varepsilon >0$, the process $(D_{t}^{(\varepsilon)})_{t \geq 0}$ defined by 
\begin{align}
D_{t}^{(\varepsilon)} := -\varepsilon^{-\frac{3}{2}} D_{\varepsilon t},\qquad \textrm{ for }\quad t \geq 0,
\end{align}
converges weakly, in $\mathbb{D}(\mathbb{R}_{+}, \mathbb{R})$, as $\varepsilon \downarrow 0$, to a Gaussian process $(\widehat{X}_{t})_{t \geq 0}$ given by
\begin{align}
\widehat{X}_{0}=0 \qquad \text{and} \qquad  \widehat{X}_{t} = \boldsymbol{\sigma}_{{\rm int}} \int_{0}^{t} u {\rm d}W_{u}, \qquad \text{for} \quad t >0, 
\end{align}
\noindent where $(W_{t})_{t \geq 0}$ is a standard Brownian motion. 
\end{proposition}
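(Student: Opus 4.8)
\emph{Proof proposal.} The plan is to realise $(D_t)_{t\ge 0}$ as a purely discontinuous, locally square-integrable martingale, to rescale it, and to invoke a functional martingale central limit theorem, following the strategy of \cite{Vlada20152} with the truncation supplied by Lemma \ref{lemma15}. Since, by definition, $D_t=-\int_0^{t}s\,{\rm d}M_s^{\rm int}$ with $(M_t^{\rm int})_{t\ge0}$ the martingale of Lemma \ref{lemma10}, the process $(D_t)_{t\ge0}$ is a locally square-integrable martingale with predictable quadratic variation
\begin{align*}
\langle D\rangle_t=(\mathbf{m}_{\rm int})^{2}\,\boldsymbol{\sigma}^{2}_{{\rm int}}\int_0^{t\wedge t^{\ast}}s^{4}Z_s^{(\infty)}\big(Z_s^{(\infty)}-1\big)\,{\rm d}s,\qquad t\ge0,
\end{align*}
by \eqref{eq18b}. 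Hence $\langle D^{(\varepsilon)}\rangle_t=\varepsilon^{-3}\langle D\rangle_{\varepsilon t}$, and the substitution $s=\varepsilon u$ gives, for every $t\ge0$ and all $\varepsilon$ small enough that $\varepsilon t<t^{\ast}$,
\begin{align*}
\langle D^{(\varepsilon)}\rangle_t=\boldsymbol{\sigma}^{2}_{{\rm int}}\int_0^{t}u^{2}\Big[\big(-\mathbf{m}_{\rm int}\,\varepsilon u\,Z_{\varepsilon u}^{(\infty)}\big)^{2}+\mathbf{m}_{\rm int}\,\varepsilon u\,\big(-\mathbf{m}_{\rm int}\,\varepsilon u\,Z_{\varepsilon u}^{(\infty)}\big)\Big]{\rm d}u .
\end{align*}
By Theorem \ref{Main1} the integrand converges, for each fixed $u>0$, to $\boldsymbol{\sigma}^{2}_{{\rm int}}u^{2}$ almost surely as $\varepsilon\downarrow0$, and it is dominated by $\boldsymbol{\sigma}^{2}_{{\rm int}}u^{2}\,\Xi^{2}$ where $\Xi:=\sup_{s\le T'}(-\mathbf{m}_{\rm int}s Z_s^{(\infty)})$ (for a fixed $T'<T(\infty)$), which is square-integrable by Corollary \ref{corollaryII}\,\ref{PropC3} and independent of $\varepsilon$. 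Dominated convergence then yields $\langle D^{(\varepsilon)}\rangle_t\to\boldsymbol{\sigma}^{2}_{{\rm int}}t^{3}/3$ almost surely, hence in probability, for every $t\ge0$; the same bound gives $\sup_{\varepsilon}\mathbb{E}[\langle D^{(\varepsilon)}\rangle_t]<\infty$, so $(D^{(\varepsilon)}_t)_{t\le T}$ is a genuine square-integrable martingale.

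Next I would route the argument through the truncation $(D_{k,t})_{t\ge0}$ of \eqref{eq32}, setting $D^{(\varepsilon)}_{k,t}:=-\varepsilon^{-3/2}D_{k,\varepsilon t}$. A jump of $D^{(\varepsilon)}_{k,\cdot}$ at rescaled time $s$ has modulus at most $\varepsilon^{1/2}|\mathbf{m}_{\rm int}|s^{2}k$, so $\sup_{s\le t}|\Delta D^{(\varepsilon)}_{k,s}|\le\varepsilon^{1/2}|\mathbf{m}_{\rm int}|t^{2}k\to0$ as $\varepsilon\downarrow0$; moreover the computation above, with $\boldsymbol{\sigma}^{2}_{{\rm int}}$ replaced by $\boldsymbol{\sigma}^{2}_{{\rm int},\le k}:=c+\sum_{i=1}^{\lfloor k\rfloor}i^{2}b_i$, shows $\langle D^{(\varepsilon)}_{k}\rangle_t\to\boldsymbol{\sigma}^{2}_{{\rm int},\le k}t^{3}/3$ in probability. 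The limiting function is continuous, deterministic and equals the predictable quadratic variation of the continuous Gaussian martingale $\widehat{X}^{(k)}_t:=\boldsymbol{\sigma}_{{\rm int},\le k}\int_0^{t}u\,{\rm d}W_u$, which is characterised in law by that quadratic variation. A functional martingale central limit theorem for locally square-integrable martingales with asymptotically negligible jumps (cf.\ the argument in \cite{Vlada20152}) then gives $D^{(\varepsilon)}_{k}\Rightarrow\widehat{X}^{(k)}$ in $\mathbb{D}(\mathbb{R}_+,\mathbb{R})$, as $\varepsilon\downarrow0$, for each fixed $k$.

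Finally I would let $k\to\infty$. Since $\boldsymbol{\sigma}_{{\rm int},\le k}\uparrow\boldsymbol{\sigma}_{{\rm int}}$ (recall $\boldsymbol{\sigma}^{2}_{{\rm int}}\ge c>0$) and $\widehat{X}^{(k)}=(\boldsymbol{\sigma}_{{\rm int},\le k}/\boldsymbol{\sigma}_{{\rm int}})\,\widehat{X}$, we have $\widehat{X}^{(k)}\Rightarrow\widehat{X}$ in $\mathbb{D}(\mathbb{R}_+,\mathbb{R})$ as $k\to\infty$. On the other hand, rescaling Lemma \ref{lemma15} gives, for $\varepsilon t<t^{\ast}$,
\begin{align*}
\mathbb{E}\Big[\sup_{s\le t}\big(D^{(\varepsilon)}_s-D^{(\varepsilon)}_{k,s}\big)^{2}\Big]=\varepsilon^{-3}\,\mathbb{E}\Big[\sup_{s\le\varepsilon t}\big(D_s-D_{k,s}\big)^{2}\Big]\le C\,\boldsymbol{\sigma}^{2}_{{\rm int},k}\,t^{3},
\end{align*}
with $\boldsymbol{\sigma}^{2}_{{\rm int},k}=\sum_{i>\lfloor k\rfloor}i^{2}b_i\to0$ as $k\to\infty$ (the tail of a convergent series, by \eqref{Assump4}), uniformly in small $\varepsilon$. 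By Markov's inequality this yields $\lim_{k\to\infty}\limsup_{\varepsilon\downarrow0}\mathbb{P}(\sup_{s\le t}|D^{(\varepsilon)}_s-D^{(\varepsilon)}_{k,s}|>\delta)=0$ for every $\delta,t>0$, and since uniform closeness on compacts implies closeness in the Skorokhod metric, the standard ``convergence together'' criterion (see, e.g., Theorem 3.2 in \cite{Billingsley1999}) upgrades the two limits above to $D^{(\varepsilon)}\Rightarrow\widehat{X}$ in $\mathbb{D}(\mathbb{R}_+,\mathbb{R})$, which is the assertion. The main points requiring care are the domination in the quadratic-variation convergence (handled via Corollary \ref{corollaryII}) and the verification of the negligible-jumps hypothesis of the martingale CLT after truncation, together with the harmless but slightly tedious bookkeeping of the $t^{\ast}$ cutoff, which plays no role in the limit since $\varepsilon t<t^{\ast}$ eventually on any compact time interval.
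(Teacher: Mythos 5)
Your proposal follows essentially the same route as the paper's proof: truncate $D$ to $D_k$ via \eqref{eq32}, apply a martingale functional CLT (the paper uses Theorem 1.4 in Chapter 7 of Ethier--Kurtz) after verifying convergence of the predictable quadratic variation and asymptotic negligibility of the jumps, then let $k\to\infty$ via Lemma \ref{lemma15} combined with a convergence-together argument. The only minor cosmetic difference is in how the QV convergence is obtained: the paper derives it in probability from Theorem \ref{PropC2} together with Corollary \ref{corollaryII}\ref{PropC3}, while you obtain it almost surely from the pathwise limit in Theorem \ref{Main1} plus dominated convergence with the dominant $\Xi$ supplied by Corollary \ref{corollaryII}\ref{PropC3}.
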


\begin{proof}
For $k \geq 1$, let $(D_{k, t}^{(\varepsilon)})_{t \geq 0}$ be the stochastic process defined by $D_{k, t}^{(\varepsilon)} := -\varepsilon^{-\frac{3}{2}} D_{k, \varepsilon t}$, for $t \geq 0$. We claim that $(D_{k, t}^{(\varepsilon)})_{t \geq 0}$ converges weakly, in $\mathbb{D}(\mathbb{R}_{+}, \mathbb{R})$, as $\varepsilon \downarrow 0$, to a Gaussian process $(\widehat{X}_{t}^{(k)})_{t \geq 0}$ given by
\begin{align} \label{eq33}
\widehat{X}_{0}^{(k)}=0 \qquad \text{and} \qquad  \widehat{X}_{t}^{(k)} = \hat{\boldsymbol{\sigma}}_{{\rm int}, k}\int_{0}^{t} u {\rm d}W_{u}, \qquad \text{for} \quad t >0, 
\end{align}
\noindent where $\hat{\boldsymbol{\sigma}}^{2}_{{\rm int}, k} \coloneqq c+ \sum_{i =1}^{\lfloor k \rfloor} i^{2}b_{i}$. Recall that convergence in $\mathbb{D}(\mathbb{R}_{+}, \mathbb{R})$ to a continuous limit is equivalent to uniform convergence on compact subsets of $[0, \infty)$, see for instance Proposition 1.17 in Chapter VI of \cite{JaShi03}. Therefore, since by \eqref{Assump4}, $\hat{\boldsymbol{\sigma}}^{2}_{{\rm int}, k} \rightarrow \boldsymbol{\sigma}^{2}_{{\rm int}}$, as $k \rightarrow \infty$, it is not difficult to see that Lemma \ref{lemma15} and \eqref{eq33} imply the result in Proposition \ref{prop3}.

To prove that $(D_{k, t}^{(\varepsilon)})_{t \geq 0}$  converges weakly towards $(\widehat{X}_{t}^{(k)})_{t \geq 0}$, we use Theorem 1.4 in Chapter 7.1 of \cite{Et1986}. Note that $(D_{k, t}^{(\varepsilon)})_{t \geq 0}$ is a well-defined locally square-integrable martingale. In particular, by \eqref{eq32}, its predictable quadratic variation $(\langle D_{k, \cdot}^{(\varepsilon)} \rangle_{t})_{t \geq 0}$ is given by
\begin{align}  \label{eq34}
\langle D_{k, \cdot}^{(\varepsilon)} \rangle_{t}   & =   \varepsilon^{-3}  (\mathbf{m}_{\rm int})^{2} \hat{\boldsymbol{\sigma}}^{2}_{{\rm int}, K} \int_{0}^{\varepsilon t \wedge t^{\ast}} s^{4}Z_{s}^{(\infty)}(Z_{s}^{(\infty)}-1) {\rm d} s \nonumber \\
& =  \hat{\boldsymbol{\sigma}}^{2}_{{\rm int}, K} \int_{0}^{t \wedge (t^{\ast}/\varepsilon)} s^{2}(- \mathbf{m}_{\rm int} \varepsilon s)^{2}Z_{ \varepsilon s}^{(\infty)}(Z_{\varepsilon s}^{(\infty)}-1) {\rm d} s.
\end{align}
\noindent We verify assumptions (b) Theorem 1.4 in Chapter 7.1 of \cite{Et1986} with 
\begin{align}
c_{1,1}(t) = \hat{\boldsymbol{\sigma}}^{2}_{{\rm int}, k} \int_{0}^{t} s^{2}{\rm d} s\qquad \textrm{and}\qquad A_{n}^{11}(t) = \langle D_{k, \cdot}^{(\varepsilon)} \rangle_{t}, \qquad\textrm{for} \qquad t \geq 0.
\end{align}
Since $(\langle D_{k, \cdot}^{(\varepsilon)} \rangle_{t})_{t \geq 0}$ is a continuous process, (1.16) in Theorem 1.4 in Chapter 7.1 of \cite{Et1986} is satisfied. Moreover, Theorem \ref{PropC2}, Corollay \ref{corollaryII} \ref{PropC3} and \eqref{eq34} imply that, for each fixed $t \geq 0$, 
\begin{align}
\langle D_{k, \cdot}^{(\varepsilon)} \rangle_{t}\quad \textrm{converges to  }\quad\hat{\boldsymbol{\sigma}}^{2}_{{\rm int}, k} \int_{0}^{t} s^{2}{\rm d} s,\qquad \textrm{as}\qquad \varepsilon \downarrow 0,
\end{align}
 in probability (this is, (1.19) in Theorem 1.4 in Chapter 7.1 of \cite{Et1986}). Finally, from the definition of $(D_{k, t}^{(\varepsilon)})_{t \geq 0}$ and by using \eqref{eq32}, it follows that  its jumps on $[0,T]$, for $0 \leq T < \infty$, are uniformly bounded by $- \mathbf{m}_{\rm int} \varepsilon^{1/2}T^{2}k$, which implies (1.17) in Theorem 1.4 in Chapter 7.1 of \cite{Et1986}. This concludes our proof. 
\end{proof}

Let $(Y_{t})_{t \geq 0}$ be the process given by
\begin{align}
Y_{0}=0 \qquad \text{and} \qquad  Y_{t} := t^{-1} \int_{0}^{t} s {\rm d} M_{s}^{\rm int}, \quad \text{for} \quad t >0.  
\end{align}
\noindent Clearly, $(Y_{t})_{t \geq 0}$ is well-defined. For $\varepsilon >0$, define the stochastic process $(Y_{t}^{(\varepsilon)})_{t \geq 0}$ by letting $Y_{t}^{(\varepsilon)} = \varepsilon^{-\frac{1}{2}} Y_{\varepsilon t}$, for $t \geq 0$. Recall the definition of the stochastic process $(X_{t}^{(\varepsilon)})_{t \geq 0}$ in \eqref{FlucD}. 

\begin{lemma} \label{lemma14}
Suppose that  $ \mathbf{m}_{\rm int}<0$ and \eqref{Assump4} hold. The process $(Y_{t})_{t \geq 0}$ satisfies 
\begin{align} \label{eq23}
 Y_{t} = - \int_{0}^{t} s^{-1}Y_{s}{\rm d} s+M_{t}^{\rm int}, \quad \text{for} \quad t \geq 0.  
\end{align}
Moreover, for any $0 \leq T <\infty$, there exists a constant $0 < C <\infty$ such that 
\begin{align} \label{eq24}
\mathbb{E} \Big[  \sup_{s \leq t} Y_{s}^{2} \Big] \leq  C (t \vee t^{2}), \quad \text{for} \quad t \leq T,
\end{align}
\noindent and
\begin{align} \label{eq25}
\lim_{\varepsilon \downarrow 0}\mathbb{E} \Big[  \sup_{t \leq T} \Big |X_{t}^{(\varepsilon)} -Y_{t}^{(\varepsilon)} \Big| \Big] = 0.
\end{align}
\end{lemma}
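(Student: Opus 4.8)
The plan is to base everything on a closed-form representation of $Y$. First I would rewrite the defining relation $Y_t=t^{-1}\int_0^t s\,{\rm d}M^{\rm int}_s$: integrating by parts the deterministic $C^1$ integrand $s\mapsto s$ against the martingale $M^{\rm int}$ of Lemma \ref{lemma10} (their bracket vanishing) gives $\int_0^t s\,{\rm d}M^{\rm int}_s=t\,M^{\rm int}_t-\int_0^t M^{\rm int}_s\,{\rm d}s$, hence
\begin{align} \label{Yrep}
Y_t=M^{\rm int}_t-\frac{1}{t}\int_0^t M^{\rm int}_s\,{\rm d}s,\qquad t>0,\qquad Y_0=0.
\end{align}
From \eqref{Yrep} one immediately gets $\sup_{s\le t}|Y_s|\le 2\sup_{s\le t}|M^{\rm int}_s|$, so \eqref{eq24} follows from \eqref{eq19b} with $p=2$: $\mathbb{E}[\sup_{s\le t}Y_s^2]\le 4\,\mathbb{E}[\sup_{s\le t}|M^{\rm int}_s|^2]\le 4C_2(2)\,t\le C\,(t\vee t^2)$.

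Next I would isolate an elementary ``inversion'' identity to obtain \eqref{eq23}: if $W$ is c\`adl\`ag with $W_0=0$ and $\int_0^t s^{-1}|W_s|\,{\rm d}s<\infty$ for all $t>0$, then $\Phi_t:=W_t-t^{-1}\int_0^t W_s\,{\rm d}s$ (with $\Phi_0=0$) satisfies $\Phi_t=W_t-\int_0^t s^{-1}\Phi_s\,{\rm d}s$. The proof is one line: with $P_t=\int_0^t W_s\,{\rm d}s$, integrating by parts on $[\delta,t]$ yields $\int_\delta^t s^{-2}P_s\,{\rm d}s=\delta^{-1}P_\delta-t^{-1}P_t+\int_\delta^t s^{-1}W_s\,{\rm d}s$, and letting $\delta\downarrow 0$ (the boundary term $\delta^{-1}P_\delta\to W_0=0$ by right-continuity) gives $\int_0^t s^{-1}\Phi_s\,{\rm d}s=\int_0^t s^{-1}W_s\,{\rm d}s-\int_0^t s^{-2}P_s\,{\rm d}s=t^{-1}P_t=W_t-\Phi_t$. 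Applying this with $W=M^{\rm int}$ --- the integrability holding because $\mathbb{E}\int_0^t s^{-1}|M^{\rm int}_s|\,{\rm d}s\le C_2(1)\int_0^t s^{-1/2}\,{\rm d}s<\infty$ by \eqref{eq19b} with $p=1$ --- identifies $Y$ with $\Phi$ via \eqref{Yrep}, and so proves \eqref{eq23}.

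For \eqref{eq25}, I would put $\Xi_t:=-\mathbf{m}_{\rm int}\,t\,Z^{(\infty)}_t-1$, so that $X^{(\varepsilon)}_t=\varepsilon^{-1/2}\Xi_{\varepsilon t}$ and $Y^{(\varepsilon)}_t=\varepsilon^{-1/2}Y_{\varepsilon t}$. By \eqref{eq22}, $\Xi_t=-\int_0^t s^{-1}\Xi_s\,{\rm d}s+(M^{\rm br}_t+M^{\rm int}_t+U_t)$, and $\int_0^t s^{-1}|\Xi_s|\,{\rm d}s<\infty$ a.s.\ by Lemma \ref{lemma11}. Applying the inversion identity to $W=M^{\rm br}+M^{\rm int}+U$ (with integrability from \eqref{eq19}, \eqref{eq19b}, \eqref{eq26}) exhibits a second solution $\widetilde\Xi$ of this linear equation; the difference $g=\Xi-\widetilde\Xi$ satisfies $g_t=-\int_0^t s^{-1}g_s\,{\rm d}s$ with $\int_0^t s^{-1}|g_s|\,{\rm d}s<\infty$, so $t\mapsto t\,g_t$ is $C^1$ with derivative $0$ and $t\,g_t\to0$, forcing $g\equiv0$. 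Cancelling the $M^{\rm int}$-terms,
\begin{align} \label{XminusY}
X^{(\varepsilon)}_t-Y^{(\varepsilon)}_t=\varepsilon^{-1/2}\Big(M^{\rm br}_{\varepsilon t}+U_{\varepsilon t}-\frac{1}{\varepsilon t}\int_0^{\varepsilon t}(M^{\rm br}_s+U_s)\,{\rm d}s\Big),
\end{align}
whence $\sup_{t\le T}|X^{(\varepsilon)}_t-Y^{(\varepsilon)}_t|\le 2\varepsilon^{-1/2}\big(\sup_{s\le\varepsilon T}|M^{\rm br}_s|+\sup_{s\le\varepsilon T}|U_s|\big)$; taking expectations and using \eqref{eq19} with $p=1$ and \eqref{eq26} bounds the right-hand side by $2\varepsilon^{-1/2}\big(C_1(1)\varepsilon T+C(\varepsilon T\vee(\varepsilon T)^2)\big)=O(\varepsilon^{1/2})\to0$.

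The bulk of the argument is routine; the main obstacle is the behaviour near $t=0$. One must justify the convergence of the various integrals $\int_0^\cdot s^{-1}(\cdots)\,{\rm d}s$ and the vanishing of the boundary terms, and one needs the uniqueness of the solution of $g_t=-\int_0^t s^{-1}g_s\,{\rm d}s$ within the class $\{\,g:\int_0^t s^{-1}|g_s|\,{\rm d}s<\infty\,\}$. All of this is supplied by the moment estimates already at hand from Corollary \ref{corollaryII}, Lemma \ref{lemma11}, and \eqref{eq19}--\eqref{eq19b}, \eqref{eq26}.
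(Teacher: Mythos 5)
Your proof is correct. The key structural difference is this: the paper proves \eqref{eq23} by integration by parts on $[r,t]$ and letting $r\downarrow 0$, and then for both \eqref{eq24} and \eqref{eq25} it feeds the resulting Volterra-type equation $h_t=-\int_0^t s^{-1}h_s\,{\rm d}s+W_t$ into an external Gronwall-type comparison (Lemma 10 of \cite{Bere2010}), which gives $\sup_{s\le t}|h_s|\le 2\sup_{s\le t}|W_s|$. You instead produce the explicit closed-form solution, namely $Y_t=M^{\rm int}_t-t^{-1}\int_0^t M^{\rm int}_s\,{\rm d}s$ (and likewise the closed form for $\Xi_t=-\mathbf{m}_{\rm int}tZ^{(\infty)}_t-1$ via the inversion identity plus a uniqueness argument), from which the same $2\sup|\cdot|$ bound is visible by inspection. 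The two routes are the same mathematics in complementary form: the paper's is shorter given the external citation, while yours is self-contained, makes the structure of the solution transparent, and in particular yields \eqref{eq24} without first passing through \eqref{eq23}. The one extra ingredient you need --- uniqueness of the null solution of $g_t=-\int_0^t s^{-1}g_s\,{\rm d}s$ in the class $\{g:\int_0^\cdot s^{-1}|g_s|\,{\rm d}s<\infty\}$ --- you handle correctly via $(tg_t)'=0$ a.e.\ and $tg_t\to 0$; note for completeness that the continuity of $g$ used there is automatic because the right-hand side of the Volterra equation is an absolutely continuous function of $t$. One small remark: with $C_2(2)$ from \eqref{eq19b}, your argument actually gives the sharper bound $\mathbb{E}[\sup_{s\le t}Y_s^2]\le 4C_2(2)\,t$, which of course implies \eqref{eq24} since $t\le t\vee t^2$.
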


\begin{proof}
By Corollary \ref{corollaryII} \ref{PropC3}, the predictable quadratic variation $(\langle D \rangle_{t})_{t \geq 0}$ of $(D_{t})_{t \geq 0}$ satisfies
\begin{align}
\mathbb{E}[\langle D \rangle_{t}] & = (\mathbf{m}_{\rm int})^{2} \boldsymbol{\sigma}^{2}_{{\rm int}} \mathbb{E}\Big[ \int_{0}^{t \wedge t^{\ast}} s^{4}Z_{s}^{(\infty)}(Z_{s}^{(\infty)}-1) {\rm d} s \Big] \leq C_{1} \int_{0}^{t} s^{2} {\rm d}s = \frac{C_{1}}{3}t^{3},
\end{align}
\noindent for some constant $0 < C_{1} < \infty$. In particular,
\begin{align} \label{eq28}
\mathbb{E}[Y_{t}^{2}] = t^{-2} \mathbb{E}[D_{t}^{2}] = t^{-2} \mathbb{E}[\langle D \rangle_{t}] \leq \frac{C_{1}}{3}t.
\end{align}

The identity \eqref{eq23} follows by integration by parts (note that $t \mapsto 1/t$ is continuous and of finite variation on any closed interval $[a, b]$, for $0 < a < b$). The only subtle point is the lack of regularity of $t \mapsto 1/t$ at $0$. This technicality is overcome, by writing first the formula for $Y_{t}-Y_{r}$, for any $0< r \leq t$, i.e.,
\begin{align} \label{eq29}
 Y_{t} -Y_{r}= - \int_{r}^{t} s^{-2} \int_{0}^{s} u {\rm d} M_{u}^{\rm int} {\rm d} s + \int_{(r,t]} {\rm d} M_{s}^{\rm int} = - \int_{r}^{t} s^{-1}Y_{s} {\rm d} s + M_{t}^{\rm int} - M_{r}^{\rm int},
\end{align}
\noindent and then letting $r \downarrow 0$. Indeed, by using \eqref{eq28}, we see that there exists a constant $0 < C_{2} < \infty$ such that $\mathbb{E}[|\int_{0}^{t} s^{-1}Y_{s}{\rm d s}| ] \leq C_{2}t^{1/2}$ and thus, $\int_{0}^{t} s^{-1}Y_{s}{\rm d s}$ is a well-defined Lebesgue integral for all $t \geq 0$ simultaneously, almost surely.

The estimate \eqref{eq24} follows from \eqref{eq23}, Lemma 10 in \cite{Bere2010} and \eqref{eq19b} in Lemma \ref{lemma10}. Observe that by \eqref{FlucD}, \eqref{eq22} and \eqref{eq23} we have that
\begin{align}
X_{t}^{(\varepsilon)} - Y_{t}^{(\varepsilon)} = - \varepsilon^{-1/2} \int_{0}^{t} s^{-1} (X_{s}^{(\varepsilon)}-Y_{s}^{(\varepsilon)}) {\rm d} s + \varepsilon^{-1/2}  M_{\varepsilon t}^{\rm br} + \varepsilon^{-1/2}  U_{\varepsilon t}, \quad \text{for} \quad 0 \leq t \leq t^{\ast}/\varepsilon.
\end{align}
\noindent By Lemma 10 in \cite{Bere2010}, $\sup_{s \leq t} |X_{s}^{(\varepsilon)} - Y_{s}^{(\varepsilon)}| \leq 2 \varepsilon^{-1/2} \sup_{s \leq t} |M_{\varepsilon s}^{\rm br} + U_{\varepsilon s}|$.  Then, by \eqref{eq19} and \eqref{eq26}, there exists a constant $0 < C < \infty$ such that $\mathbb{E}[ \sup_{s \leq T} |X_{s}^{(\varepsilon)} - Y_{s}^{(\varepsilon)}| ] \leq C ( \varepsilon^{1/2} T \vee \varepsilon^{3/2} T^{2})$, for $0 \leq T <\infty$ and $\varepsilon >0$ such that $T \leq t^{\ast}/\varepsilon$, which implies \eqref{eq25}. This completes the proof.
\end{proof}

Finally, we prove Theorem \ref{Main3}. 

\begin{proof}[Proof of Theorem \ref{Main3}]
By \eqref{eq25} and the symmetry of the law of the standard Brownian motion $(W_{t})_{t \geq 0}$ it suffices to show that the process $(-Y^{(\varepsilon)}_{t})_{t \geq 0}$, converges weakly, in $\mathbb{D}(\mathbb{R}_{+}, \mathbb{R})$, as $\varepsilon \downarrow 0$, to the process $(X_{t})_{t \geq 0}$. To do that, we follow the argument explained in the last part of the proof of Theorem 1.1 of \cite{Vlada20152}, which in turn is based in Steps 2-4 of the proof of Lemma 4.8 of \cite{Vlada2015}. For $k >0$, define the stochastic process $(Y^{(\varepsilon)}_{k, t})_{t \geq 0}$ by letting 
\begin{align}
Y^{(\varepsilon)}_{k, t} := -(k^{-1} \mathbf{1}_{\{t \in [0,k]\}} + t^{-1}  \mathbf{1}_{\{t \in (k,\infty)\}})D_{t}^{(\varepsilon)}, \quad \text{for} \quad t \geq 0.
\end{align}
\noindent Recall that if $f: \mathbb{R}_{+} \rightarrow \mathbb{R}$ is continuous, then the mapping $w \mapsto f(w)$ is continuous from $\mathbb{D}(\mathbb{R}_{+}, \mathbb{R})$ into itself. Hence, Proposition \ref{prop3} implies that for any $k >0$, as $\varepsilon \downarrow 0$, the process $(Y^{(\varepsilon)}_{k, t})_{t \geq 0}$ converges weakly, in $\mathbb{D}(\mathbb{R}_{+}, \mathbb{R})$, to the process $(\tilde{X}_{t}^{(k)})_{t \geq 0}$ defined by
\begin{align}
\tilde{X}^{(k)}_{t} := -(tk^{-1} \mathbf{1}_{\{t \in [0,k]\}} +  \mathbf{1}_{\{t \in (k,\infty)\}}) X_{t}, \quad \text{for} \quad t \geq 0.
\end{align}

Note that $Y^{(\varepsilon)}_{t} = - t^{-1} D_{t}^{(\varepsilon)}$ for $t >0$, and thus, $Y^{(\varepsilon)}_{t}-Y^{(\varepsilon)}_{k, t} = (k^{-1}-t^{-1}) \mathbf{1}_{\{t \in (0,k]\}} D_{t}^{(\varepsilon)}$, for $t \geq 0$. By \eqref{eq24}, there exists a constant $0 < C < \infty$ (independent of $\varepsilon$) such that
\begin{align} \label{eq35}
\mathbb{E} \Big[ \sup_{t \geq 0}|Y^{(\varepsilon)}_{t}-Y^{(\varepsilon)}_{k, t}|^{2} \Big] \leq 4  \mathbb{E}\Big[ \sup_{t \leq k}|Y^{(\varepsilon)}_{t}|^{2} \Big]  \leq  C (k \vee k^{2}).
\end{align}
\noindent On the other hand, $-X_{t} - \tilde{X}_{t}^{(k)} = (tk^{-1}-1) \mathbf{1}_{\{t \in [0,k]\}} X_{t}$, for $t \geq 0$. By Doob's maximal inequality, there exists a constant $0 < C < \infty$ (independent of $\varepsilon$) such that
\begin{align} \label{eq36}
\mathbb{E} \Big[ \sup_{t \geq 0}|-X_{t} - \tilde{X}_{t}^{(k)}|^{2} \Big] \leq  4 \mathbb{E} \Big[ \sup_{t \leq k}|X_{t}|^{2} \Big]  \leq  C k.
\end{align}

Let $d_{\infty}^{\rm Sk}$ denote the Skorokhod metric on $\mathbb{D}(\mathbb{R}_{+}, \mathbb{R})$ (here, $\mathbb{R}$ is equipped with the Euclidean distance) as defined in Section 1 of Chapter VI in \cite{JaShi03}. In particular, note that $d_{\infty}^{\rm Sk}(f,g) \leq \sup_{t \geq 0}|f(t) -g(t)|$, for any $f,g \in \mathbb{D}(\mathbb{R}_{+}, \mathbb{R})$. Finally, it is straightforward to show that $(-Y^{(\varepsilon)}_{t})_{t \geq 0}$ converges to $(X_{t})_{t \geq 0}$. By Theorem 2.1 in Chapter 1 of \cite{Billingsley1999}, it suffices to show that
\begin{align}
\lim_{\varepsilon \downarrow 0} | \mathbb{E}[F(-Y^{(\varepsilon)}_{t})_{t \geq 0})] - \mathbb{E}[F((X_{t})_{t \geq 0})] | = 0,
\end{align}
\noindent for all all bounded, uniformly continuous function $F: \mathbb{D}(\mathbb{R}_{+}, \mathbb{R}) \rightarrow \mathbb{R}$. The above follows from the convergence of $(Y^{(\varepsilon)}_{k, t})_{t \geq 0}$ towards $(\tilde{X}_{t}^{(k)})_{t \geq 0}$ (i.e., for any $k >0$, $|\mathbb{E}[F((Y^{(\varepsilon)}_{k, t})_{t \geq 0})] - \mathbb{E}[F((\tilde{X}_{t}^{(k)})_{t \geq 0})] | \rightarrow 0$, as $\varepsilon \downarrow 0$), \eqref{eq35}, \eqref{eq36}, the triangle inequality, the uniform continuity of $F$, the Markov inequality and the above discussion (the argument is based on the addition and subtraction of intermediate terms). This concludes the proof.
\end{proof}

\appendix
\section{Appendix} \label{Apendice}

The next lemma is a modification of Lemma 2.2 in \cite{Van1995}. Let $(\Omega, \mathcal{F}, \mathcal{P})$ be a probability space and let $(M_{t})_{t \geq 0}$ be a locally square-integrable martingale w.r.t.\ the filtration $(\mathcal{F}_{t})_{t \geq 0}$. Assume that $M_{0}=0$ and that $(\mathcal{F}_{t})_{t \geq 0}$ satisfies the usual conditions. Let $(V^{(2)}_{t})_{t \geq 0}$ be the predictable variation of $(M_{t})_{t \geq 0}$, and for $p \geq 3$ an integer, let $(V^{(p)}_{t})_{t \geq 0}$ be the (predictable) compensator of $(\sum_{s \leq t} |\Delta M_{s}|^{p})_{t\geq 0}$. 

\begin{lemma} \label{lemmaA1}
Suppose that there exists a (deterministic) function $f: \mathbb{R}_{+} \rightarrow (0, \infty)$ such that for all $t \geq 0$ and some $0< K<\infty$, 
\begin{align} \label{eqA1}
V^{(p)}_{t} \leq \frac{p!}{2} K^{p-2} f(t), \quad \text{for} \quad p=2,3,\dots.
\end{align}
\noindent For $0 < \lambda < 1/K$, let $L_{t} \coloneqq \sum_{p=2}^{\infty} \frac{\lambda^{p}}{p!} V^{(p)}_{t}$, for $t \geq 0$. Then, 
\begin{align}
(\exp(\lambda M_{t}-L_{t}))_{t \geq 0}, \quad \text{is a super-martingale}.
\end{align}
\noindent Furthermore, for each $x >0$ and $0 \leq T < \infty$,
\begin{align}
\mathbb{P} \Big(\sup_{t \leq T} M_{t} \geq x \Big) \leq e^{-\frac{x^{2}}{2(xK+f(T))}}.
\end{align}
\end{lemma}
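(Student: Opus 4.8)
The plan is to establish an exponential supermartingale of the form $\exp(\lambda M_t - \Psi(\lambda) f(t))$ for a suitable $\lambda > 0$, and then apply the maximal inequality for nonnegative supermartingales together with an optimization over $\lambda$. This is the classical Bernstein-type argument adapted to the jump setting, following the line of Lemma 2.1 in \cite{Van1995}.

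First I would write, for $\lambda > 0$, the Doléans-Dade exponential associated with $\lambda M$. Since $M$ is a locally square-integrable martingale that is purely discontinuous (a compensated jump integral), the process
\begin{align}
\mathcal{E}_t(\lambda) := \exp\Big(\lambda M_t - \int_0^t \int \big(e^{\lambda x} - 1 - \lambda x\big)\, \Pi(\mathrm{d}s, \mathrm{d}x)\Big)
\end{align}
is a nonnegative local martingale, where $\Pi$ denotes the predictable compensator of the jump measure of $M$. The key estimate is to control the compensator term from above using \eqref{eqA1}: expanding $e^{\lambda x} - 1 - \lambda x = \sum_{p \geq 2} \lambda^p x^p / p!$ and integrating term by term, one gets
\begin{align}
\int_0^t \int \big(e^{\lambda x} - 1 - \lambda x\big)\, \Pi(\mathrm{d}s, \mathrm{d}x) \leq \sum_{p \geq 2} \frac{\lambda^p}{p!} V_t^{(p)} \leq \sum_{p \geq 2} \frac{\lambda^p}{2} K^{p-2} f(t) = \frac{\lambda^2 f(t)}{2(1 - \lambda K)}
\end{align}
for $0 < \lambda < 1/K$ (one must be mildly careful that $x$ can be negative, e.g.\ $x = -1$; but $e^{\lambda x} - 1 - \lambda x \geq 0$ always, and for negative $x$ the bound $e^{\lambda x}-1-\lambda x \le \sum_{p\ge 2}\lambda^p|x|^p/p!$ still holds, so the term-by-term domination goes through with $|\Delta M_s|^p$, which is exactly what $V^{(p)}$ compensates). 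Hence $\exp\big(\lambda M_t - \tfrac{\lambda^2 f(t)}{2(1-\lambda K)}\big) \leq \mathcal{E}_t(\lambda)$, and since $f$ is nondecreasing is not needed — we only use $f(t) \leq f(T)$ on $[0,T]$, so actually I should assume or note that $f$ is nondecreasing, or simply replace $f(t)$ by $f(T)$ in the bound on $[0,T]$; the latter is cleaner since the statement only concerns $\sup_{t\le T}$.

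Next, using that $\mathcal{E}(\lambda)$ is a nonnegative local martingale hence a supermartingale with $\mathcal{E}_0(\lambda) = 1$, Doob's maximal inequality for nonnegative supermartingales gives, for any $x > 0$,
\begin{align}
\mathbb{P}\Big(\sup_{t \leq T} M_t \geq x\Big) &= \mathbb{P}\Big(\sup_{t \leq T} \exp\big(\lambda M_t - \tfrac{\lambda^2 f(T)}{2(1-\lambda K)}\big) \geq \exp\big(\lambda x - \tfrac{\lambda^2 f(T)}{2(1-\lambda K)}\big)\Big) \nonumber \\
&\leq \exp\Big(-\lambda x + \tfrac{\lambda^2 f(T)}{2(1-\lambda K)}\Big).
\end{align}
Finally I would optimize: choosing $\lambda = x/(xK + f(T)) \in (0, 1/K)$ makes the exponent equal to $-x^2/\big(2(xK + f(T))\big)$ after a short computation (this is the standard choice that balances the two terms in the Bernstein bound), yielding the claimed inequality. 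The main obstacle — really the only delicate point — is the justification that $\mathcal{E}(\lambda)$ is a genuine supermartingale rather than merely a local martingale, and the interchange of sum and integral in bounding the compensator; both are handled by monotone convergence (all summands are nonnegative after taking absolute values of the jumps) and Fatou's lemma, exactly as in \cite{Van1995}.
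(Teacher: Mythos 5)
Your proof is correct and follows essentially the same exponential-supermartingale route as the paper. The paper invokes the proof of Lemma~2.1 in van de Geer directly for the supermartingale property of $\exp(\lambda M_t - S_t)$ with $S_t = \sum_{p\ge 2}\frac{\lambda^p}{p!}V^{(p)}_t$, whereas you derive the supermartingale from the Dol\'eans--Dade exponential of $\lambda M$ and then dominate its compensator $\int_0^t\int(e^{\lambda x}-1-\lambda x)\,\Pi$ by $S_t$ using the term-by-term bound with $|x|^p$; the subsequent Doob maximal inequality and the optimisation $\lambda = x/(xK+f(T))$ are identical.

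Two small remarks. First, your aside about $f$ being nondecreasing is a red herring: what the argument actually uses is that the compensator $S_t$ (equivalently each $V^{(p)}_t$) is nondecreasing in $t$, which is automatic because it compensates a nonnegative increasing process. This gives $S_t\le S_T\le \frac{\lambda^2 f(T)}{2(1-\lambda K)}$ for all $t\le T$ without any assumption on $f$; there is no need to replace $f(t)$ by $f(T)$ inside the supermartingale. Second, your Dol\'eans--Dade construction tacitly assumes $M$ is purely discontinuous (so that there is no continuous quadratic-variation term $\tfrac{\lambda^2}{2}\langle M^{\rm c}\rangle_t$ in the exponential compensator). Since the hypothesis \eqref{eqA1} only controls the jump compensators $V^{(p)}_t$, the same implicit restriction is present in the lemma as stated, and in the paper's application $M$ is indeed a compensated pure-jump integral, so this is harmless.
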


\begin{proof}
By \eqref{eqA1}, 
\begin{align} \label{eqA2}
L_{t}  \leq \frac{\lambda^{2} f(t)}{2(1-\lambda K)}, \quad \text{for} \quad t \geq 0. 
\end{align} 
\noindent It is has been proven in the Proof of Lemma 2.2 in \cite{Van1995} that $(\exp(\lambda M_{t}-L_{t}))_{t \geq 0}$ is a super-martingale. Observe that $(L_{t})_{t \geq 0}$ is an increasing process. Then, it follows from \eqref{eqA2} that
\begin{align}
\mathbb{P} \Big(\sup_{t \leq T} M_{t} \geq x \Big) = \mathbb{P} \Big(\sup_{t \leq T} e^{\lambda M_{t} - L_{t}} \geq e^{\lambda x-\frac{\lambda^{2} f(T)}{2(1-\lambda K)}} \Big) \leq e^{-\lambda x+\frac{\lambda^{2} f(T)}{2(1-\lambda K)}},
\end{align}
\noindent for $0 < \lambda < 1/K$. Finally, our second claim follows by choosing $\lambda = x/(f(T)+ Kx)$.
\end{proof}

\paragraph{Acknowledgements.}
JCP gratefully acknowledges the financial support provided by CONAHCyT through the Ciencia de Frontera grant CF-2023-I-2566.


\begin{thebibliography}{10}

\bibitem{Alkemper2007}
R.~Alkemper and M.~Hutzenthaler, \emph{Graphical representation of some duality
  relations in stochastic population models}, Electron. Comm. Probab.
  \textbf{12} (2007), 206--220. \MR{2320823}

\bibitem{Athreya2005}
S.~R. Athreya and J.~M. Swart, \emph{Branching-coalescing particle systems},
  Probab. Theory Related Fields \textbf{131} (2005), no.~3, 376--414.
  \MR{2123250}

\bibitem{Bansaye2019}
V.~Bansaye, \emph{Approximation of stochastic processes by nonexpansive flows
  and coming down from infinity}, Ann. Appl. Probab. \textbf{29} (2019), no.~4,
  2374--2438. \MR{3984256}

\bibitem{Bansaye2016}
V.~Bansaye, S.~M\'{e}l\'{e}ard, and M.~Richard, \emph{Speed of coming down from
  infinity for birth-and-death processes}, Adv. in Appl. Probab. \textbf{48}
  (2016), no.~4, 1183--1210. \MR{3595771}

\bibitem{BerestyckiJ2004}
J.~Berestycki, \emph{Exchangeable fragmentation-coalescence processes and their
  equilibrium measures}, Electron. J. Probab. \textbf{9} (2004), no. 25,
  770--824. \MR{2110018}

\bibitem{Bere2010}
J.~Berestycki, N.~Berestycki, and V.~Limic, \emph{The {$\Lambda$}-coalescent
  speed of coming down from infinity}, Ann. Probab. \textbf{38} (2010), no.~1,
  207--233. \MR{2599198}

\bibitem{Berzunza2020}
G.~Berzunza~Ojeda and J.~C. Pardo, \emph{Branching processes with pairwise
  interactions}, ALEA Lat. Am. J. Probab. Math. Stat. \textbf{22} (2025),
  no.~1, 711--748. \MR{4929072}

\bibitem{Billingsley1999}
P.~Billingsley, \emph{Convergence of probability measures}, second ed., Wiley
  Series in Probability and Statistics: Probability and Statistics, John Wiley
  \& Sons, Inc., New York, 1999, A Wiley-Interscience Publication. \MR{1700749}

\bibitem{BJS2015}
J.~E. Bj\"{o}rnberg and S.~O. Stef\'{a}nsson, \emph{Random walk on random
  infinite looptrees}, J. Stat. Phys. \textbf{158} (2015), no.~6, 1234--1261.
  \MR{3317412}

\bibitem{Et1986}
S.~N. Ethier and T.~G. Kurtz, \emph{Markov processes}, Wiley Series in
  Probability and Mathematical Statistics: Probability and Mathematical
  Statistics, John Wiley \& Sons, Inc., New York, 1986, Characterization and
  convergence. \MR{838085}

\bibitem{Foucart2022}
C.~Foucart, \emph{A phase transition in the coming down from infinity of simple
  exchangeable fragmentation-coagulation processes}, Ann. Appl. Probab.
  \textbf{32} (2022), no.~1, 632--664. \MR{4386538}

\bibitem{Zenghu2010}
Z.~Fu and Z.~Li, \emph{Stochastic equations of non-negative processes with
  jumps}, Stochastic Process. Appl. \textbf{120} (2010), no.~3, 306--330.
  \MR{2584896}

\bibitem{Pa2021}
A.~Gonz\'{a}lez~Casanova, J.~C. Pardo, and J.~L. P\'{e}rez, \emph{Branching
  processes with interactions: subcritical cooperative regime}, Adv. in Appl.
  Probab. \textbf{53} (2021), no.~1, 251--278. \MR{4232756}

\bibitem{GONZALEZ2020}
A.~{González Casanova}, V.~{Miró Pina}, and J.~C. Pardo, \emph{The
  {W}right-{F}isher model with efficiency}, Theoretical Population Biology
  \textbf{132} (2020), 33--46.

\bibitem{griffiths1997}
R.~C. Griffiths and P.~Marjoram, \emph{An ancestral recombination graph},
  Institute for Mathematics and its Applications \textbf{87} (1997), 257.

\bibitem{Hanson2020}
P.~A. {Hanson}, P.~A. {Jenkins}, J.~{Koskela}, and D.~{Span{\`o}},
  \emph{{Diffusion Limits at Small Times for Coalescent Processes with Mutation
  and Selection}}, arXiv e-prints (2020), arXiv:2012.10316.

\bibitem{Ikeda1981}
N.~Ikeda and S.~Watanabe, \emph{Stochastic differential equations and diffusion
  processes}, North-Holland Mathematical Library, vol.~24, North-Holland
  Publishing Co., Amsterdam-New York; Kodansha, Ltd., Tokyo, 1981. \MR{637061}

\bibitem{JaShi03}
J.~Jacod and A.~N. Shiryaev, \emph{Limit theorems for stochastic processes},
  second ed., Grundlehren der mathematischen Wissenschaften [Fundamental
  Principles of Mathematical Sciences], vol. 288, Springer-Verlag, Berlin,
  2003. \MR{1943877}

\bibitem{Jagers1994}
P.~Jagers, \emph{Towards dependence in general branching processes}, Classical
  and modern branching processes ({M}inneapolis, {MN}, 1994), IMA Vol. Math.
  Appl., vol.~84, Springer, New York, 1997, pp.~127--139. \MR{1601717}

\bibitem{Ka82}
A.~V. Kalinkin, \emph{On the probability of the extinction of branching process
  with interaction of particles}, Teor. Veroyatnost. i Primenen. \textbf{27}
  (1982), no.~1, 201--205.

\bibitem{Ka99}
A.~V. Kalinkin, \emph{Final probabilities of a branching process with interaction of
  particles, and the epidemic process}, Teor. Veroyatnost. i Primenen.
  \textbf{43} (1998), no.~4, 773--780. \MR{1692393}

\bibitem{Ka01}
A.~V. Kalinkin, \emph{Exact solutions of the {K}olmogorov equations for critical
  branching processes with two complexes of particle interaction}, Uspekhi Mat.
  Nauk \textbf{56} (2001), no.~3(339), 173--174. \MR{1859738}

\bibitem{Ka02}
A.~V. Kalinkin, \emph{On the probability of the extinction of a branching process with
  two complexes of particle interaction}, Teor. Veroyatnost. i Primenen.
  \textbf{46} (2001), no.~2, 376--381. \MR{1968693}

\bibitem{Ka02-1}
A.~V. Kalinkin, \emph{Markov branching processes with interaction}, Uspekhi Mat. Nauk
  \textbf{57} (2002), no.~2(344), 23--84. \MR{1918194}

\bibitem{Kingman1982}
J.~F.~C. Kingman, \emph{The coalescent}, Stochastic Process. Appl. \textbf{13}
  (1982), no.~3, 235--248. \MR{671034}

\bibitem{KRONE1997}
S.~M. Krone and C.~Neuhauser, \emph{Ancestral processes with selection},
  Theoretical Population Biology \textbf{51} (1997), no.~3, 210--237.

\bibitem{Lambert2005}
A.~Lambert, \emph{The branching process with logistic growth}, Ann. Appl.
  Probab. \textbf{15} (2005), no.~2, 1506--1535. \MR{2134113}

\bibitem{Vlada20152}
V.~Limic and A.~Talarczyk, \emph{Diffusion limits at small times for
  {$\Lambda$}-coalescents with a {K}ingman component}, Electron. J. Probab.
  \textbf{20} (2015), no. 45, 20. \MR{3339865}

\bibitem{Vlada2015}
V.~Limic and A.~Talarczyk, \emph{Second-order asymptotics for the block counting process in a
  class of regularly varying {$\Lambda$}-coalescents}, Ann. Probab. \textbf{43}
  (2015), no.~3, 1419--1455. \MR{3342667}

\bibitem{Neuhauser1997}
C.~Neuhauser and S.~M. Krone, \emph{The genealogy of samples in models with
  selection.}, Genetics \textbf{145 2} (1997), 519--34.

\bibitem{Palau2018}
S.~Palau and J.~C. Pardo, \emph{Branching processes in a {L}\'{e}vy random
  environment}, Acta Appl. Math. \textbf{153} (2018), 55--79. \MR{3745730}

\bibitem{Peszat2007}
S.~Peszat and J.~Zabczyk, \emph{Stochastic partial differential equations with
  {L}\'{e}vy noise}, Encyclopedia of Mathematics and its Applications, vol.
  113, Cambridge University Press, Cambridge, 2007, An evolution equation
  approach. \MR{2356959}

\bibitem{Schweinsberg2000}
J.~Schweinsberg, \emph{A necessary and sufficient condition for the
  {$\Lambda$}-coalescent to come down from infinity}, Electron. Comm. Probab.
  \textbf{5} (2000), 1--11. \MR{1736720}

\bibitem{Van1995}
S.~van~de Geer, \emph{Exponential inequalities for martingales, with
  application to maximum likelihood estimation for counting processes}, Ann.
  Statist. \textbf{23} (1995), no.~5, 1779--1801. \MR{1370307}

\end{thebibliography}

\providecommand{\bysame}{\leavevmode\hbox to3em{\hrulefill}\thinspace}
\providecommand{\MR}{\relax\ifhmode\unskip\space\fi MR }
\providecommand{\MRhref}[2]{%
  \href{http://www.ams.org/mathscinet-getitem?mr=#1}{#2}
}
\providecommand{\href}[2]{#2}

\end{document}